\newcommand{\fstar}{\text{\ding{72}}}
\newcommand{\mbst}{\mathbf{mb}^\fstar}
\newcommand{\amst}{\mathsf{amst}}
\newcommand{\mstr}{\mathfrak{M}}
\newcommand{\Mod}{\mathsf{Mod}}
\newcommand{\lang}{\mathcal{L}}
\newcommand{\limp}{\longrightarrow}
\newcommand{\pow}{\mathcal{P}}
\newcommand{\NN}{\mathbb{N}}
\newcommand{\gecq}{\mathsf{gECQ\text{-}sat}}
\newcommand{\secq}{\mathsf{sECQ\text{-}sat}}
\newcommand{\specq}{\mathsf{spECQ\text{-}sat}}
\newcommand{\pfecq}{\mathsf{pfECQ\text{-}sat}}
\newcommand{\fgecq}{\mathsf{gECQ\text{-}finsat}}
\newcommand{\fsecq}{\mathsf{sECQ\text{-}finsat}}
\newcommand{\fspecq}{\mathsf{spECQ\text{-}finsat}}
\newcommand{\fpfecq}{\mathsf{pfECQ\text{-}finsat}}
\theoremstyle{definition}
\newtheorem{thm}{Theorem}[section]
\newtheorem{cor}[thm]{Corollary}
\newtheorem{dfn}[thm]{Definition}
\newtheorem{rem}[thm]{Remark}
\newtheorem{exa}[thm]{Example}
  \title{Generalized Explosion Principles: A Semantic Perspective}
  \author[1]{Sankha S. Basu}
  \author[2]{Sayantan Roy}
  \date{November 2, 2025}
  \affil[1]{Department of Mathematics\\
  Indraprastha Institute of Information Technology-Delhi\\
  New Delhi, India.}
  \affil[2]{International Laboratory for Logic, Linguistics and Formal Philosophy\\National Research University Higher School of Economics\\Moscow\\Russia}
\begin{document}
\setstcolor{magenta}
\maketitle

\begin{abstract}
This article is motivated by the fact that there is a distinction between the descriptions of logical explosion from syntactic and semantic points of view. The discussion is illustrated using the concept of \emph{abstract model structures} and the notions of satisfiability and finite satisfiability in these structures. Various principles of explosion have been described in terms of unsatisfiability or finite unsatisfiability. The semantic analogues of the principles of explosion introduced in \cite{BasuRoy2024} have also been considered among these. The article also studies the characterizations of and the interconnections between these new principles of explosion.
\end{abstract}
\textbf{Keywords:} 
Paraconsistency, Principles of explosion, Semantics, Abstract Model Structures, Universal Model Theory, Universal Logic.

\tableofcontents

\section{Prologue}
This article continues the discussion on generalizing the notion of logical explosion, and hence, paraconsistency as failure of a principle of (logical) explosion, that was started in \cite{BasuRoy2022} and later continued in \cite{BasuRoy2024}. As in these two articles, we use the framework of logical structures from the field of universal logic (see \cite{Beziau1994, Beziau2006}). A \emph{logical structure} is a pair $(\lang,\vdash)$, where $\lang$ is a set and $\vdash\,\subseteq\pow(\lang)\times\lang$ ($\pow(\lang)$ denotes the power set of $\lang$). Given a logical structure $(\lang,\vdash)$ and $\Gamma\subseteq\lang$, $C_{\vdash}(\Gamma)=\{\alpha\in \lang\mid\,\Gamma\vdash \alpha\}$. $C_\vdash$ can be seen as an operator from $\pow(\lang)$ to $\pow(\lang)$. Thus, given a relation $\vdash\,\subseteq\pow(\lang)\times\lang$, there is a corresponding operator $C_{\vdash}:\pow(\lang)\to\pow(\lang)$, where, for any $\Gamma\in\pow(\lang)$, $C_{\vdash}(\Gamma)$ is as described above. Conversely, given an operator $C:\pow(\lang)\to\pow(\lang)$, we can define a relation $\vdash\,\subseteq\pow(\lang)\times\lang$ such that $C=C_{\vdash}$ as follows. For all $\Gamma\cup\{\alpha\}\subseteq\lang$, $\Gamma\vdash\alpha$ iff $\alpha\in C(\Gamma)$. This observation allows us to move freely back and forth between a relation $\vdash$ and its corresponding operator $C_{\vdash}$, and on some occasions, define a $\vdash\,\subseteq\pow(\lang)\times\lang$ such that $C_{\vdash}=C$, for some operator $C:\pow(\lang)\to\pow(\lang)$ in this very sense. Finally, a \emph{logic} is a special case of logical structure where $\lang$ is an algebra (viz., an algebra of formulas). 

In this article, we consider a special type of logical structures, viz., \emph{model-theoretic logical structures} (see Definition \ref{dfn:mlogstr}). This is based on the concept of \emph{abstract model structures} introduced in \cite{RoyBasuChakraborty2025}.

The motivation behind this work stems from the observation that there is a fundamental difference between the syntactic and semantic treatments of logical explosion. Suppose $(\lang,\vdash)$ is a Hilbert-style presentation of classical propositional logic (CPL). Then, for any $\alpha,\beta\in\lang$, there is a derivation of $\beta$ from $\{\alpha,\neg\alpha\}$ (`$\neg$' denotes the classical negation). Thus, we have the most common rule of explosion, \emph{ex contradictione sequitur quodlibet (ECQ)}: $\{\alpha,\neg\alpha\}\vdash\beta$ for any $\alpha,\beta\in\lang$. Now, once CPL is interpreted via valuations from $\lang$ to $\{0,1\}$, the universe of the 2-element Boolean algebra, ECQ is a valid rule only because of the absence of any valuation $v$ such that $v(\alpha)=v(\neg\alpha)=1$. 

The distinction between the syntactic and semantic paths to explosion or paraconsistency can also be witnessed by considering the distinction or the lack thereof between the 3-valued strong Kleene logic $K_3$ and the logic of paradox $LP$. Although the connectives are interpreted identically in these two logics, $LP$ is paraconsistent, while $K_3$ is not. This difference is due to the interpretation of the non-classical third truth value - while in $K_3$ it is seen as indicating `neither true nor false', i.e., a \emph{gap}, in $LP$ it is taken as indicating `both true and false', i.e., a \emph{glut}. As a result, the third value is designated in $LP$ but not in $K_3$. This makes $LP$ paraconsistent and $K_3$ paracomplete, i.e., the law of excluded middle fails in $K_3$. A similar distinction exists between the paracomplete weak Kleene logic $WK$ and the paraconsistent weak Kleene logic $PWK$ (see \cite[Chapter 7]{priest2001}). In each of the four logics above, if any valuation maps a formula $\alpha$ to the non-classical value, then it also maps $\neg\alpha$ to the same value. Hence, in the case of $LP$ and $PWK$, there exist valuations that satisfy $\{\alpha,\neg\alpha\}$ in the sense that both $\alpha$ and $\neg\alpha$ are mapped to a designated value. This puts a check on logical explosion, and so, makes these logics paraconsistent. On the other hand, in the case of $K_3$ and $WK$, there is no valuation that satisfies $\{\alpha,\neg\alpha\}$. These two logics thus validate ECQ much like CPL. 

Thus, while the syntactic way of comprehending ECQ usually rests on the \textit{explosive nature} of the set $\{\alpha,\neg\alpha\}$, the semantic way rests on the \textit{unsatisfiability} of $\{\alpha,\neg\alpha\}$. In the case of CPL, this particular distinction is not of significant importance due to the soundness and completeness theorems. However, in the absence of either one of them, the distinction - and hence the difference - between these two becomes visible. In this article, we pursue a semantic\footnote{From the perspective of universal logic, it is perhaps better to say that our approach is `semantics-inspired' rather than `semantic.' This is because we are not considering anything that looks like `semantics' in the usual sense; we are merely using a certain kind of structure that looks similar to those often found in the semantic analysis of (concrete) logics. Nevertheless, for the sake of simplicity, we will continue to use the term `semantic' instead of its more precise non-equivalent alternative.} approach to logical explosion. The semantics is brought into the picture through the \emph{abstract model structures} or $\amst$s introduced in \cite{RoyBasuChakraborty2025}. The principles of explosion thus obtained are in terms of unsatisfiability or finite unsatisfiability. There are eight such principles obtained here. These are then connected to the principles of explosion introduced in \cite{BasuRoy2024}. We also investigate the interconnections between these eight principles and prove characterization theorems for them.

The article is structured as follows. In Section \ref{sec:amst}, the necessary background on abstract model structures and related concepts is discussed. This is followed by an analysis of finitary-ness of a model-theoretic logical structure and its connection to the compactness of the $\amst$ inducing it. In Section \ref{sec:sem}, we introduce the new principles of explosion. The section is split into three subsections: one for the principles in terms of unsatisfiability, one for the principles in terms of finite unsatisfiability, and the final one for a comparative study of both these types. The interconnections between these principles of explosion and the characterizations of these principles have been discussed in these subsections. In the final section, we discuss some directions for future research.

\section{Abstract model structures and related stuff\label{sec:amst}}

An \emph{abstract model structure} ($\amst$), introduced in \cite{RoyBasuChakraborty2025}, is a triple of the form $\mstr=(\mathbf{M},\models,\pow(\lang))$, where $\mathbf{M}\,(\ne\emptyset),\lang$ are sets, and $\models\,\subseteq\,\mathbf{M}\times \pow(\lang)$. One can think of $\lang$ as the set of \emph{sentences} or \emph{well-formed formulas (wffs)} of some language, $\mathbf{M}$ as a set of \emph{structures} (or \emph{models}) for $\lang$, and $\models$ as the \emph{satisfaction relation} between sets of sentences and models. Unless otherwise stated, $\lang$ is assumed to be infinite sets.

\begin{dfn}[Satisfiability, Finite Satisfiability, Compactness]
    Given an $\mathsf{amst}$ $\mstr=(\mathbf{M},\models,\pow(\lang))$, $m\in \mathbf{M}$, and a set $\Gamma\subseteq \lang$, we say that $m$ \emph{satisfies} $\Gamma$ \emph{in $\mstr$} if $m\models \Gamma$. A set $\Gamma\subseteq \lang$ is said to be \emph{satisfiable in $\mstr$} if there exists $m\in \mathbf{M}$ such that $m$ satisfies $\Gamma$, and \emph{finitely satisfiable in $\mstr$} if every finite subset of $\Gamma$ is satisfiable. An $\amst$ $\mstr=(\mathbf{M},\models,\pow(\lang))$ is said to be \emph{compact} if for all $\Gamma\subseteq \lang$, $\Gamma$ is satisfiable in $\mstr$ iff it is finitely satisfiable in $\mstr$.
\end{dfn}
We drop an explicit mention of $\mstr$, for discussions involving a single $\amst$.

Given an $\mathsf{amst}$ $\mstr=(\mathbf{M},\models,\pow(\lang))$, we define a map $\Mod: \pow(\lang)\to \pow(\mathbf{M})$ as follows. For all $\Gamma\subseteq \lang$,
\[
\Mod(\Gamma)=\{m\in \mathbf{M}\mid\,m\models\Gamma\}.
\]

\begin{dfn}[Normal $\amst$]\label{dfn:normamst}
    An $\mathsf{amst}$ $\mstr=(\mathbf{M},\models,\pow(\lang))$ is said to be \emph{normal} if,  for all $m\in\mathbf{M}$ and for all $\Gamma\subseteq \lang$, $m\models\Gamma$ iff $m\models\{\alpha\}$ for all $\alpha\in\Gamma$. 
\end{dfn}

The following result about normal $\amst$s was proved in \cite{RoyBasuChakraborty2025}.

\begin{thm}[{\cite[Theorem 3.10]{RoyBasuChakraborty2025}}]\label{thm:Prop_of_Mod}
    Suppose $\mstr=(\mathbf{M},\models,\pow(\lang))$ is a normal $\mathsf{amst}$. Then, the following statements hold.
\begin{enumerate}[label=(\roman*)]
    \item For all $\Gamma\subseteq\lang$, $\Mod(\Gamma)=\displaystyle\bigcap_{\alpha\in \Gamma}\Mod(\{\alpha\})$.
    \item For all $\Gamma\subseteq\Sigma\subseteq\lang$, $\Mod(\Sigma)\subseteq \Mod(\Gamma)$.
    \item For any family $(\Sigma_i)_{i\in I}$ of subsets of $\lang$,  
    \[
    \Mod\left(\displaystyle\bigcup_{i\in I}\Sigma_i\right)=\displaystyle\bigcap_{i\in I}\Mod(\Sigma_i)\quad\hbox{ and }\quad\displaystyle\bigcup_{i\in I}\Mod(\Sigma_i)\subseteq\Mod\left(\displaystyle\bigcap_{i\in I}\Sigma_i\right).
    \]
\end{enumerate}
\end{thm}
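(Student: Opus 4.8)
The plan is to reduce everything to item (i): items (ii) and (iii) are formal consequences of (i) together with elementary identities for intersections and unions of families of sets, and the only substantive use of normality occurs in proving (i).

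For (i), I would fix $\Gamma\subseteq\lang$ and do a membership chase. For $m\in\mathbf{M}$, the statement $m\in\Mod(\Gamma)$ unfolds to $m\models\Gamma$, which by the defining biconditional of a normal $\amst$ (Definition \ref{dfn:normamst}) is equivalent to ``$m\models\{\alpha\}$ for every $\alpha\in\Gamma$'', i.e.\ to $m\in\Mod(\{\alpha\})$ for every $\alpha\in\Gamma$, i.e.\ to $m\in\bigcap_{\alpha\in\Gamma}\Mod(\{\alpha\})$. The one point deserving a sentence of care is the degenerate case $\Gamma=\emptyset$: the right-hand side is then the empty intersection, conventionally taken to be $\mathbf{M}$, while normality makes ``$m\models\{\alpha\}$ for all $\alpha\in\emptyset$'' vacuously true, so $\Mod(\emptyset)=\mathbf{M}$ and the two sides still agree.

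For (ii), given $\Gamma\subseteq\Sigma\subseteq\lang$, apply (i) to both sets; since $\Gamma\subseteq\Sigma$, the intersection over $\Sigma$ is contained in the intersection over $\Gamma$, which is exactly $\Mod(\Sigma)\subseteq\Mod(\Gamma)$. For the equality in (iii), apply (i) to $\bigcup_{i\in I}\Sigma_i$ and regroup the resulting intersection: $\bigcap_{\alpha\in\bigcup_{i}\Sigma_i}\Mod(\{\alpha\})=\bigcap_{i\in I}\bigl(\bigcap_{\alpha\in\Sigma_i}\Mod(\{\alpha\})\bigr)=\bigcap_{i\in I}\Mod(\Sigma_i)$, using (i) again in the last step. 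For the inclusion in (iii), observe that $\bigcap_{i\in I}\Sigma_i\subseteq\Sigma_j$ for each $j\in I$, so (ii) gives $\Mod(\Sigma_j)\subseteq\Mod(\bigcap_{i\in I}\Sigma_i)$ for every $j$, and taking the union over $j$ yields the claim; one should also dispatch the boundary case $I=\emptyset$, where both sides of the equality collapse to $\mathbf{M}$ (consistently with $\Mod(\emptyset)=\mathbf{M}$) and the inclusion is trivial.

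I do not expect a genuine obstacle here: the argument is a direct unwinding of definitions, and the only things to be attentive to are that normality is used as a biconditional (so both directions of the equivalence in (i) are needed) and the empty-set and empty-index conventions (so that the degenerate instances of (i) and (iii) are not overlooked). It may be worth appending a remark that the two inclusions appearing in (iii) are in general strict — for instance when the $\Sigma_i$ are pairwise disjoint enough — and that this is to be expected, since $\Mod$ reverses inclusions but is not a lattice anti-isomorphism.
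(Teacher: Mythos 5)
Your proof is correct. The paper does not actually prove this theorem --- it is quoted from \cite{RoyBasuChakraborty2025} (Theorem 3.10 there) without an argument --- but your derivation is the evident one: (i) is an immediate membership chase through the biconditional in Definition \ref{dfn:normamst}, and (ii) and (iii) follow formally from (i) plus standard identities for indexed intersections, with the empty-set and empty-index conventions handled as you describe. One tiny quibble with your closing aside: the first assertion in (iii) is an equality, so only the second (the inclusion $\bigcup_{i\in I}\Mod(\Sigma_i)\subseteq\Mod\bigl(\bigcap_{i\in I}\Sigma_i\bigr)$) can be strict in general; the remark is harmless but should be phrased accordingly.
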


\begin{dfn}[Logical Structure induced by an $\amst$]\label{dfn:mlogstr}
    Suppose $\mstr=(\mathbf{M},\models,\pow(\lang))$ is an $\amst$. Then, the \emph{logical structure induced by $\mstr$}, denoted by $\mathcal{S}_{\mstr}=(\lang,\vdash_{\mstr})$, is such that $\vdash_{\mstr}$ is defined as follows. For all $\Gamma\cup\{\alpha\}\subseteq \lang$, 
    \[
    \Gamma\vdash_{\mstr}\alpha\hbox{ iff, for all } m\in \mathbf{M}, \hbox{ if }m\models\Gamma\hbox{ then }m\models\{\alpha\},\hbox{ i.e., }\Mod(\Gamma)\subseteq\Mod(\{\alpha\}).
    \]
    We drop the subscript $\mstr$ when there is no ambiguity about the $\amst$ under consideration. A logical structure induced by an $\amst$ is called a \emph{model-theoretic logical structure}.    
\end{dfn}

\begin{dfn}[Tarski-type Logical Structure]\label{def:Tarski}
 A logical structure $(\lang, \vdash)$ is said to be of \emph{Tarski-type} if $\vdash$ satisfies the following properties.
 \begin{enumerate}[label=(\alph*)]
     \item For all $\Gamma\subseteq \lang$ and  $\alpha\in \Gamma$, $\Gamma\vdash\alpha$. (Reflexivity)
     \item For all $\Gamma\cup\Sigma\cup\{\alpha\}\subseteq \lang$, $\Gamma\vdash\alpha$ and $\Gamma\subseteq \Sigma$ implies that $\Sigma\vdash\alpha$. (Monotonicity)
     \item For all $\Gamma\cup\Sigma\cup\{\alpha\}\subseteq \lang$, $\Gamma\vdash\alpha$ if $\Sigma\vdash\alpha$ and for all $\beta\in \Sigma$, $\Gamma\vdash\beta$. (Transitivity) 
 \end{enumerate}   
 In terms of the operator $C_\vdash$, the above conditions can be stated as follows.
 \begin{enumerate}[label=(\alph*)]
\item For all $\Gamma\subseteq \lang$, $\Gamma\subseteq C_{\vdash}(\Gamma)$. (Reflexivity)
\item For all $\Gamma,\Sigma\subseteq \lang$, if $\Gamma\subseteq \Sigma$ then $C_{\vdash}(\Gamma)\subseteq C_{\vdash}(\Sigma)$. (Monotonicity)
\item For all $\Gamma,\Sigma\subseteq \lang$, if $\Sigma\subseteq {C_{\vdash}}(\Gamma)$ then ${C_{\vdash}}(\Sigma)\subseteq {C_{\vdash}}(\Gamma)$. (Transitivity)
\end{enumerate}
\end{dfn}

\begin{thm}[{\cite[Theorem 3.8]{RoyBasuChakraborty2025}}]\label{thm:Tarski-type}
    The logical structure induced by a normal $\amst$ is of Tarski-type.
\end{thm}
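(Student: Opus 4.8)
The plan is to unwind Definition \ref{dfn:mlogstr} and reduce each of the three Tarski conditions to a statement about the map $\Mod$, then discharge each one by quoting the appropriate clause of Theorem \ref{thm:Prop_of_Mod}. Recall that $\Gamma\vdash_{\mstr}\alpha$ means precisely $\Mod(\Gamma)\subseteq\Mod(\{\alpha\})$, so the entire proof is a translation exercise together with three short set-theoretic inclusions. Since the hypothesis is that $\mstr$ is normal, all three parts of Theorem \ref{thm:Prop_of_Mod} are available.

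First I would prove Reflexivity. Fix $\Gamma\subseteq\lang$ and $\alpha\in\Gamma$. By Theorem \ref{thm:Prop_of_Mod}(i), $\Mod(\Gamma)=\bigcap_{\beta\in\Gamma}\Mod(\{\beta\})$, and since $\alpha\in\Gamma$ this intersection is contained in $\Mod(\{\alpha\})$; hence $\Gamma\vdash_{\mstr}\alpha$. (Alternatively one can appeal directly to normality: $m\models\Gamma$ forces $m\models\{\alpha\}$ for every $\alpha\in\Gamma$.) Next, Monotonicity: assume $\Gamma\vdash_{\mstr}\alpha$ and $\Gamma\subseteq\Sigma$. Theorem \ref{thm:Prop_of_Mod}(ii) gives $\Mod(\Sigma)\subseteq\Mod(\Gamma)$, and $\Mod(\Gamma)\subseteq\Mod(\{\alpha\})$ by hypothesis, so composing the two inclusions yields $\Mod(\Sigma)\subseteq\Mod(\{\alpha\})$, i.e.\ $\Sigma\vdash_{\mstr}\alpha$.

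Finally, Transitivity: assume $\Sigma\vdash_{\mstr}\alpha$ and $\Gamma\vdash_{\mstr}\beta$ for every $\beta\in\Sigma$. The second assumption says $\Mod(\Gamma)\subseteq\Mod(\{\beta\})$ for all $\beta\in\Sigma$, hence $\Mod(\Gamma)\subseteq\bigcap_{\beta\in\Sigma}\Mod(\{\beta\})$, which equals $\Mod(\Sigma)$ by Theorem \ref{thm:Prop_of_Mod}(i). Combining this with $\Mod(\Sigma)\subseteq\Mod(\{\alpha\})$ from the first assumption gives $\Mod(\Gamma)\subseteq\Mod(\{\alpha\})$, i.e.\ $\Gamma\vdash_{\mstr}\alpha$. (If one prefers the operator formulation, the same three steps reappear verbatim with $C_{\vdash_{\mstr}}$ in place of $\vdash_{\mstr}$.)

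I do not anticipate a genuine obstacle here: the proof is essentially bookkeeping, and the only place normality does real work is in invoking Theorem \ref{thm:Prop_of_Mod}(i), which converts $\Mod$ of a set into an intersection of $\Mod$ of singletons — this is exactly the identity that powers both Reflexivity and Transitivity. The one point worth stating carefully is that $\vdash_{\mstr}$ is defined via $\Mod(\{\alpha\})$ (singletons) on the right-hand side, so Theorem \ref{thm:Prop_of_Mod}(i) is what bridges the gap between the single-formula consequence relation and set-indexed intersections; everything else is routine inclusion-chasing.
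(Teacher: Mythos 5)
Your proof is correct, and since the paper imports this result from \cite{RoyBasuChakraborty2025} without reproducing a proof, your argument — translating each Tarski condition into an inclusion between $\Mod$-images and discharging it via Theorem \ref{thm:Prop_of_Mod}(i) and (ii) — is exactly the standard route one would expect, with normality entering only through the identity $\Mod(\Gamma)=\bigcap_{\beta\in\Gamma}\Mod(\{\beta\})$. The only edge case worth a passing word is $\Sigma=\emptyset$ in Transitivity, where the empty intersection is all of $\mathbf{M}$ and normality gives $\Mod(\emptyset)=\mathbf{M}$ vacuously, so the argument still goes through.
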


\begin{dfn}[Finitary]
    A logical structure $(\lang,\vdash)$ is said to be \emph{finitary} if, for all $\Gamma\cup\{\alpha\}\subseteq\lang$, $\Gamma\vdash\alpha$ implies that there exists a finite $\Gamma^\prime\subseteq\Gamma$ such that $\Gamma^\prime\vdash\alpha$. In other words, for any $\alpha\in C_\vdash(\Gamma)$, there exists a finite $\Gamma^\prime\subseteq\Gamma$ such that $\alpha\in C_\vdash(\Gamma^\prime)$. 
\end{dfn}

Given an $\amst$ $\mstr=(\mathbf{M},\models,\pow(\lang))$, for each $\alpha\in\lang$, we define an $\amst$ $\mstr_\alpha=(\mathbf{M}_\alpha,\models_\alpha,\pow(\lang))$, where $\mathbf{M}_\alpha=\mathbf{M}\setminus\Mod(\{\alpha\})$ and $\models_\alpha\,\subseteq\mathbf{M}_\alpha\times\pow(\lang)$ is defined as follows. For all $m\in\mathbf{M}_\alpha$ and $\Gamma\subseteq\lang$, $m\models_\alpha\Gamma$ iff $m\models\Gamma$, i.e., $\models_\alpha\,=\,(\mathbf{M}_\alpha\times\pow(\lang))\,\cap\models$. Given an $\alpha\in\lang$, $\mathbf{M}_\alpha$ contains all those elements of $\mathbf{M}$ that `do not satisfy $\alpha$', i.e., $\mathbf{M}_\alpha=\{m\in\mathbf{M}\mid\,m\not\models\{\alpha\}\}$. In case of classical logic, this amounts to the set of all `models' of `not-$\alpha$.' Thus this construction can be seen as an attempt at generalizing the semantic handling of `negation.' We call $\mstr_\alpha$ the \emph{anti-model structure relative to $\alpha$}.

The following theorem characterizes finitary model-theoretic logical structures in terms of compactness using the above idea.

\begin{thm}[Characterization of Finitary-ness]\label{thm:char_finitary}
    Suppose $\mstr=(\mathbf{M},\models,\pow(\lang))$ is an $\amst$. Let, for any $\alpha\in\lang$, $\mstr_\alpha$ be as described above and $\mathcal{S}=(\lang,\vdash)$ be the logical structure induced by $\mstr$. Then, $\mathcal{S}$ is finitary iff, for all $\Gamma\cup\{\alpha\}\subseteq\lang$, $\Gamma$ is finitely satisfiable in $\mstr_\alpha$ implies that it is satisfiable in $\mstr_\alpha$.
\end{thm}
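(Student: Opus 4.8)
The plan is to reduce both the finitary-ness of $\mathcal{S}$ and the compactness of every $\mstr_\alpha$ to a single relational statement about $\vdash$. The hinge is the elementary observation that, for all $\Sigma\subseteq\lang$ and $\alpha\in\lang$,
\[
\Sigma\text{ is satisfiable in }\mstr_\alpha\quad\Longleftrightarrow\quad\Sigma\not\vdash\alpha .
\]
This is immediate from the definitions: $\Sigma$ is satisfiable in $\mstr_\alpha$ iff some $m\in\mathbf{M}_\alpha$ has $m\models_\alpha\Sigma$, i.e., some $m\in\mathbf{M}$ has $m\not\models\{\alpha\}$ and $m\models\Sigma$, i.e., $\Mod(\Sigma)\not\subseteq\Mod(\{\alpha\})$, i.e., $\Sigma\not\vdash\alpha$. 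Specialising to finite subsets, $\Sigma$ is \emph{finitely} satisfiable in $\mstr_\alpha$ iff $\Gamma'\not\vdash\alpha$ for every finite $\Gamma'\subseteq\Sigma$. Hence ``$\mstr_\alpha$ is compact'' is equivalent to the assertion: for all $\Gamma\subseteq\lang$, $\Gamma\vdash\alpha$ iff $\Gamma'\vdash\alpha$ for some finite $\Gamma'\subseteq\Gamma$.

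For the direction ``every $\mstr_\alpha$ compact $\Rightarrow$ $\mathcal{S}$ finitary'', I would fix $\Gamma\cup\{\alpha\}\subseteq\lang$ with $\Gamma\vdash\alpha$. By the observation, $\Gamma$ is not satisfiable in $\mstr_\alpha$; by compactness of $\mstr_\alpha$ it is then not finitely satisfiable in $\mstr_\alpha$, so some finite $\Gamma'\subseteq\Gamma$ is not satisfiable in $\mstr_\alpha$; applying the observation again yields $\Gamma'\vdash\alpha$. As $\Gamma$ and $\alpha$ were arbitrary, $\mathcal{S}$ is finitary.

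For the converse, I would assume $\mathcal{S}$ is finitary, fix $\alpha\in\lang$, and check for each $\Gamma\subseteq\lang$ the biconditional ``satisfiable iff finitely satisfiable'' in $\mstr_\alpha$. The direction ``finitely satisfiable $\Rightarrow$ satisfiable'' is the previous argument in contrapositive form: if $\Gamma$ is not satisfiable in $\mstr_\alpha$, then $\Gamma\vdash\alpha$, so finitary-ness supplies a finite $\Gamma'\subseteq\Gamma$ with $\Gamma'\vdash\alpha$, i.e., $\Gamma'$ is not satisfiable in $\mstr_\alpha$, i.e., $\Gamma$ is not finitely satisfiable in $\mstr_\alpha$. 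The opposite direction, ``satisfiable $\Rightarrow$ finitely satisfiable'', is the routine one: any $m\in\mathbf{M}_\alpha$ witnessing $m\models\Gamma$ also witnesses $m\models\Gamma'$ for every $\Gamma'\subseteq\Gamma$ --- equivalently $\Mod(\Gamma)\subseteq\Mod(\Gamma')$ whenever $\Gamma'\subseteq\Gamma$, so $\vdash$ is monotone; see Theorem~\ref{thm:Prop_of_Mod}(ii) and Theorem~\ref{thm:Tarski-type}. Together the two halves give compactness of $\mstr_\alpha$.

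No deep obstacle arises once the bridge observation is in place; the care needed is in the logic of the nested biconditionals --- recognising that compactness of $\mstr_\alpha$ is \emph{exactly} the statement ``$\Gamma\vdash\alpha$ iff a finite subset of $\Gamma$ already derives $\alpha$'', whose nontrivial half is finitary-ness of $\mathcal{S}$ and whose trivial half is monotonicity of $\vdash$. The one point worth flagging is that this trivial half --- ``satisfiable $\Rightarrow$ finitely satisfiable'' in $\mstr_\alpha$ --- genuinely uses that $\models$ is preserved downward along subsets (it fails for an arbitrary relation $\models\,\subseteq\mathbf{M}\times\pow(\lang)$); this is precisely the monotonicity of $\vdash$ invoked above, and everything else is a direct unwinding of the definition of $\mstr_\alpha$.
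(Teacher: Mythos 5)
Your argument is, at its core, the same as the paper's: both hinge on the translation ``$\Sigma$ is satisfiable in $\mstr_\alpha$ iff $\Sigma\not\vdash\alpha$,'' and your treatment of the direction ``every $\mstr_\alpha$ compact $\Rightarrow$ $\mathcal{S}$ finitary,'' as well as of the nontrivial half of the converse, coincides with the paper's proof. The point where your proof does not close is exactly the one you flag: the ``satisfiable $\Rightarrow$ finitely satisfiable'' half of compactness of $\mstr_\alpha$. You justify it by $\Mod(\Gamma)\subseteq\Mod(\Gamma')$ for $\Gamma'\subseteq\Gamma$, citing Theorem~\ref{thm:Prop_of_Mod}(ii) and Theorem~\ref{thm:Tarski-type}; but both of those results require $\mstr$ to be \emph{normal}, whereas the theorem here is stated for an arbitrary $\amst$. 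For an arbitrary $\models\,\subseteq\mathbf{M}\times\pow(\lang)$, a set can be satisfiable in $\mstr_\alpha$ without being finitely satisfiable there, and finitary-ness of $\mathcal{S}$ gives no purchase on that direction (it only says that an unsatisfiable set of $\mstr_\alpha$ has an unsatisfiable finite subset, never the converse).

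This is not a defect you introduced: the paper's own proof of the forward direction silently identifies ``$\mstr_\alpha$ is not compact'' with ``some $\Gamma$ is finitely satisfiable but not satisfiable in $\mstr_\alpha$,'' i.e.\ it uses only one direction of the biconditional defining compactness. Your cleaner decomposition exposes the missing direction, and without some downward-preservation hypothesis the statement actually fails: take $\mathbf{M}=\{m\}$ and declare $m\models\Gamma$ iff $\Gamma$ is infinite or empty. Then $\Mod(\{\beta\})=\emptyset$ for every $\beta\in\lang$, so $\Gamma\vdash\beta$ iff $\Gamma$ is finite and nonempty, and $\mathcal{S}$ is finitary (take $\Gamma'=\Gamma$); yet $\mstr_\alpha=\mstr$ for every $\alpha$, and $\lang$ is satisfiable but not finitely satisfiable in it, so no $\mstr_\alpha$ is compact. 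Either normality (or merely: $m\models\Gamma$ and $\Gamma'\subseteq\Gamma$ imply $m\models\Gamma'$) must be added to the hypotheses, or ``compact'' in the conclusion must be weakened to ``every finitely satisfiable set is satisfiable.'' With either repair, your argument is complete and is essentially the paper's.
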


\begin{proof}
     Suppose $\mathcal{S}$ is finitary. If possible, let $\Gamma\cup\{\alpha\}\subseteq\lang$ such that $\Gamma$ is finitely satisfiable, but not satisfiable in $\mstr_\alpha$. So, for all finite $\Gamma^\prime\subseteq \Gamma$, there exists $n\in\mathbf{M}_\alpha$ such that $n\models_\alpha\Gamma^\prime$, i.e., $n\models\Gamma^\prime$, but $m\not\models_\alpha\Gamma$, i.e., $m\not\models\Gamma$, for all $m\in\mathbf{M}_\alpha$. Thus, for each finite $\Gamma^\prime\subseteq\Gamma$, $\Mod(\Gamma^\prime)\cap\mathbf{M}_\alpha\neq\emptyset$, while $\Mod(\Gamma)\cap\mathbf{M}_\alpha=\emptyset$, where $\Mod:\pow(\lang)\to\pow(\mathbf{M})$ as described before. Hence, for each finite $\Gamma^\prime\subseteq\Gamma$, $\Mod(\Gamma^\prime)\not\subseteq\Mod(\{\alpha\})$, i.e., $\Gamma^\prime\not\vdash\alpha$, while $\Mod(\Gamma)\subseteq\Mod(\{\alpha\})$, i.e., $\Gamma\vdash\alpha$. This contradicts the assumption that $\mathcal{S}$ is finitary. Hence, $\mstr_\alpha$ is compact for each $\alpha\in\lang$.

     Conversely, suppose $\mstr_\alpha$ is compact, for each $\alpha\in\lang$, but $\mathcal{S}$ is not finitary. Then, there exists $\Gamma\cup\{\alpha\}\subseteq \lang$ such that $\Gamma\vdash\alpha$ but $\Gamma^\prime\not\vdash\alpha$ for all finite $\Gamma^\prime\subseteq \Gamma$. So, for each finite $\Gamma^\prime\subseteq \Gamma$, there exists $m_{\Gamma^\prime}\in \mathbf{M}$ such that $m_{\Gamma^\prime}\models \Gamma^\prime$ but $m_{\Gamma^\prime}\not\models \{\alpha\}$. This implies that $\Gamma$ is finitely satisfiable in $\mstr_\alpha$. Now, as $\mstr_\alpha$ is compact for each $\alpha\in\lang$, $\Gamma$ is satisfiable in $\mstr_\alpha$. So, there exists $m_{\Gamma}\in \mathbf{M}_\alpha$ such that $m_{\Gamma}\models_\alpha \Gamma$. Since $m_\Gamma\in \mathbf{M}_\alpha$, $m_\Gamma\notin \Mod(\{\alpha\})$. Thus, $m_\Gamma\models \Gamma$ but $m_\Gamma\not\models\{\alpha\}$, which implies that $\Gamma\not\vdash\alpha$. This is a contradiction. Hence, $\mathcal{S}$ is finitary.
\end{proof}

Let $\mstr=(\mathbf{M},\models,\pow(\lang))$ be an $\amst$. Instead of constructing an anti-model structure relative to an $\alpha\in\lang$, we can also construct an anti-model structure relative to a set $\Lambda\subseteq\lang$ in the same way. Thus, for each $\Lambda\subseteq\lang$, $\mstr_\Lambda=(\mathbf{M},\models_\Lambda,\pow(\lang))$, where $\mathbf{M}_\Lambda=\mathbf{M}\setminus\Mod(\Lambda)$ and $\models_\Lambda\,=\,\mathbf{M}_\Lambda\cap\models$. This gives us the following characterization of finitary normal $\amst$s. 

\begin{thm}[Characterization of Finitary-ness for Normal $\amst$s]\label{thm:char_finnormal}
   Suppose $\mstr=(\mathbf{M},\models,\pow(\lang))$ is a normal $\amst$ and $\mathcal{S}=(\lang,\vdash)$ is the logical structure induced by $\mstr$. Then, $\mathcal{S}$ is finitary iff, for all finite $\Lambda\subseteq \lang$, $\mstr_\Lambda$ is compact.
\end{thm}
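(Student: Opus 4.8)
The plan is to mimic the proof of Theorem \ref{thm:char_finitary}, but replacing the single-formula anti-model structures $\mstr_\alpha$ with the set-indexed ones $\mstr_\Lambda$, and exploiting normality to reduce a derivation with a finite left-hand side to a single-conclusion statement about a finite set. The key observation that makes normality do real work here is the following: by Theorem \ref{thm:Prop_of_Mod}(i), for a \emph{finite} $\Lambda=\{\alpha_1,\dots,\alpha_k\}\subseteq\lang$ we have $\Mod(\Lambda)=\bigcap_{i=1}^k\Mod(\{\alpha_i\})$, and $\Gamma\vdash\alpha_i$ for every $i$ is equivalent to $\Mod(\Gamma)\subseteq\Mod(\Lambda)$; moreover (again by normality) $\Gamma\vdash$ every element of $\Lambda$ is the natural way to say ``$\Gamma$ entails $\Lambda$.'' So ``$\mstr_\Lambda$ is not compact'' will translate into: there is a $\Gamma$ with every finite subset satisfiable outside $\Mod(\Lambda)$ but $\Gamma$ itself only satisfiable inside $\Mod(\Lambda)$, i.e. $\Mod(\Gamma)\subseteq\Mod(\Lambda)$ while $\Mod(\Gamma')\not\subseteq\Mod(\Lambda)$ for all finite $\Gamma'\subseteq\Gamma$.

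For the forward direction, assume $\mathcal{S}$ is finitary and suppose toward a contradiction that $\mstr_\Lambda$ is not compact for some finite $\Lambda=\{\alpha_1,\dots,\alpha_k\}$. Extract $\Gamma$ as above. From $\Mod(\Gamma)\subseteq\Mod(\Lambda)\subseteq\Mod(\{\alpha_i\})$ we get $\Gamma\vdash\alpha_i$ for each $i$. By finitary-ness, for each $i$ there is a finite $\Gamma_i\subseteq\Gamma$ with $\Gamma_i\vdash\alpha_i$; put $\Gamma^\ast=\Gamma_1\cup\cdots\cup\Gamma_k$, a finite subset of $\Gamma$. By monotonicity (Theorem \ref{thm:Tarski-type}, since $\mstr$ is normal) $\Gamma^\ast\vdash\alpha_i$ for every $i$, hence $\Mod(\Gamma^\ast)\subseteq\bigcap_i\Mod(\{\alpha_i\})=\Mod(\Lambda)$, contradicting $\Mod(\Gamma')\not\subseteq\Mod(\Lambda)$ for finite $\Gamma'\subseteq\Gamma$. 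The only mild subtlety is the degenerate case $\Lambda=\emptyset$: then $\Mod(\Lambda)=\mathbf M$, $\mathbf M_\Lambda=\emptyset$, and $\mstr_\Lambda$ is vacuously compact, so nothing needs to be shown there — I would dispose of this case at the outset, or note that the argument goes through with the empty intersection convention.

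For the converse, assume $\mstr_\Lambda$ is compact for every finite $\Lambda$ and suppose $\mathcal{S}$ is not finitary: there is $\Gamma\cup\{\alpha\}\subseteq\lang$ with $\Gamma\vdash\alpha$ but $\Gamma'\not\vdash\alpha$ for every finite $\Gamma'\subseteq\Gamma$. Apply compactness with the singleton $\Lambda=\{\alpha\}$ (which is finite): exactly as in the proof of Theorem \ref{thm:char_finitary}, $\Gamma'\not\vdash\alpha$ gives $m_{\Gamma'}\in\Mod(\Gamma')\setminus\Mod(\{\alpha\})=\Mod(\Gamma')\cap\mathbf M_{\{\alpha\}}$, so $\Gamma$ is finitely satisfiable in $\mstr_{\{\alpha\}}$; compactness yields $m_\Gamma\in\mathbf M_{\{\alpha\}}$ with $m_\Gamma\models\Gamma$, i.e. $m_\Gamma\in\Mod(\Gamma)\setminus\Mod(\{\alpha\})$, contradicting $\Gamma\vdash\alpha$. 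Note this direction needs only compactness of \emph{singleton}-indexed anti-model structures and does not even use normality, so it is essentially immediate from Theorem \ref{thm:char_finitary}'s converse argument.

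The main obstacle is conceptual rather than technical: getting the dictionary right between ``$\mstr_\Lambda$ not compact'' and a statement purely about $\vdash$ and finite subsets, and in particular making sure the passage from ``$\Gamma$ entails each element of the finite set $\Lambda$'' back to ``$\Mod(\Gamma^\ast)\subseteq\Mod(\Lambda)$'' is licensed — this is precisely where Theorem \ref{thm:Prop_of_Mod}(i) (finite intersection) and the monotonicity half of Theorem \ref{thm:Tarski-type} are invoked, and both require $\mstr$ to be normal, which is why the hypothesis of normality cannot be dropped from this strengthening of Theorem \ref{thm:char_finitary}.
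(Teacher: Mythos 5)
Your proof is correct and follows essentially the same route as the paper's: the forward direction uses normality via Theorem \ref{thm:Prop_of_Mod}(i) to split $\Mod(\Lambda)$ into the intersection of the $\Mod(\{\alpha_i\})$, applies finitary-ness to each $\alpha_i$, and recombines the finite witnesses by monotonicity (Theorem \ref{thm:Tarski-type}); the converse reduces to singleton $\Lambda$ and invokes Theorem \ref{thm:char_finitary}, exactly as in the paper. Your explicit handling of the degenerate case $\Lambda=\emptyset$ is a small additional care the paper leaves implicit.
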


\begin{proof}
    Suppose $\mathcal{S}$ is finitary but there exists a finite $\Lambda\subseteq\lang$ such that $\mstr_\Lambda$ is not compact. Then, there exists $\Gamma\subseteq \lang$ such that $\Gamma$ is finitely satisfiable, but not satisfiable in $\mstr_\Lambda$. So, for all finite $\Gamma^\prime\subseteq \Gamma$, there exists $n\in\mathbf{M}_\Lambda$ such that $n\models_\Lambda\Gamma^\prime$, i.e., $n\models\Gamma^\prime$, but $m\not\models_\Lambda\Gamma$, i.e., $m\not\models\Gamma$, for all $m\in\mathbf{M}_\Lambda$. Then, by similar arguments as in the proof of Theorem \ref{thm:char_finitary}, we can conclude that for each finite $\Gamma^\prime\subseteq\Gamma$, $\Mod(\Gamma^\prime)\not\subseteq\Mod(\Lambda)$, but $\Mod(\Gamma)\subseteq\Mod(\Lambda)$. 
    
    Now, since $\mstr$ is normal, by Theorem \ref{thm:Prop_of_Mod}(i), $\Mod(\Lambda)=\displaystyle\bigcap_{\alpha\in \Lambda}\Mod(\{\alpha\})$. Hence, $\Mod(\Gamma)\subseteq \Mod(\{\alpha\})$ for all $\alpha\in \Lambda$. Thus, $\Gamma\vdash\alpha$ for all $\alpha\in \Lambda$. Now, as $\mathcal{S}$ is finitary, for each $\alpha\in \Lambda$, there exists a finite $\Gamma_\alpha\subseteq \Gamma$ such that $\Gamma_\alpha\vdash\alpha$. Let $\Gamma_0=\displaystyle\bigcup_{\alpha\in \Lambda}\Gamma_\alpha$. Then, $\Gamma_0$ is a finite subset of $\Gamma$ as each $\Gamma_\alpha$ is finite and $\Lambda$ is finite. Since $\mstr$ is normal, $\mathcal{S}$ is of Tarski-type, and hence, monotonic by Theorem \ref{thm:Tarski-type}. Thus, $\Gamma_0\vdash\alpha$ for all $\alpha\in \Lambda$. So, $\Mod(\Gamma_0)\subseteq \displaystyle\bigcap_{\alpha\in \Lambda}\Mod(\{\alpha\})=\Mod(\Lambda)$. This, however contradicts our previous conclusion that $\Mod(\Gamma^\prime)\not\subseteq\Mod(\Lambda)$ for all finite $\Gamma^\prime\subseteq\Gamma$. Hence, $\mstr_\Lambda$ is compact for each finite $\Lambda\subseteq\lang$.

    Conversely, suppose $\mstr_\Lambda$ is compact for each finite $\Lambda\subseteq\lang$. Then, in particular, $\mstr_\alpha$ is compact for each $\alpha\in\lang$. So, by Theorem \ref{thm:char_finitary}, $\mathcal{S}$ is finitary.
\end{proof}

The next result was proved in \cite{RoyBasuChakraborty2025}. We now obtain a different proof of it as a corollary of the above theorem.

\begin{cor}[{\cite[Corollary 3.19]{RoyBasuChakraborty2025}}]\label{cor:finunsat=>compact}
    Suppose $\mstr=(\mathbf{M},\models,\pow(\lang))$ is a normal $\amst$ such that the logical structure induced by it $\mathcal{S}=(\lang,\vdash)$ is finitary. If there exists a finite unsatisfiable set, then $\mstr$ is compact.  
\end{cor}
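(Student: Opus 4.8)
The plan is to mimic the short argument given for the corollary that immediately follows Theorem~\ref{thm:char_finitary}, but now feeding in the sharper characterization available for normal $\amst$s, namely Theorem~\ref{thm:char_finnormal}. First I would fix a finite unsatisfiable set $\Lambda\subseteq\lang$, whose existence is hypothesized. By definition of satisfiability, $\Lambda$ being unsatisfiable means exactly that $\Mod(\Lambda)=\emptyset$.

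Next I would unwind the construction of the anti-model structure relative to $\Lambda$. Since $\Mod(\Lambda)=\emptyset$, the underlying model class is $\mathbf{M}_\Lambda=\mathbf{M}\setminus\Mod(\Lambda)=\mathbf{M}$, and the restricted satisfaction relation is $\models_\Lambda\,=\,(\mathbf{M}_\Lambda\times\pow(\lang))\cap\models\,=\,(\mathbf{M}\times\pow(\lang))\cap\models\,=\,\models$. Hence $\mstr_\Lambda=(\mathbf{M}_\Lambda,\models_\Lambda,\pow(\lang))$ is literally the $\amst$ $\mstr$ itself, not merely an isomorphic copy.

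Finally I would invoke Theorem~\ref{thm:char_finnormal}: because $\mstr$ is normal and $\mathcal{S}=(\lang,\vdash)$ is finitary, $\mstr_\Sigma$ is compact for \emph{every} finite $\Sigma\subseteq\lang$. Applying this with $\Sigma=\Lambda$ and using the identification $\mstr_\Lambda=\mstr$ established in the previous step, we conclude that $\mstr$ is compact, as desired.

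I do not anticipate a genuine obstacle; the only points requiring care are the two elementary observations that ``$\Lambda$ unsatisfiable'' is precisely ``$\Mod(\Lambda)=\emptyset$'' and that this forces the on-the-nose equality $\mstr_\Lambda=\mstr$ rather than a weaker correspondence. This result is the normal-$\amst$ counterpart of the earlier corollary: there a single unsatisfiable singleton $\{\beta\}$ was used together with Theorem~\ref{thm:char_finitary}, whereas here normality is what allows an arbitrary finite unsatisfiable set to play the same role via Theorem~\ref{thm:char_finnormal}.
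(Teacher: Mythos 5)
Your argument is exactly the paper's own proof: fix a finite unsatisfiable $\Lambda$, note that $\Mod(\Lambda)=\emptyset$ forces $\mstr_\Lambda=\mstr$ on the nose, and then apply Theorem~\ref{thm:char_finnormal} to conclude that $\mstr_\Lambda$, hence $\mstr$, is compact. The proposal is correct and matches the paper's approach step for step.
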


\begin{proof}
    Let $\Pi$ be a finite unsatisfiable set. Since $\mathcal{S}$ is finitary, by Theorem \ref{thm:char_finnormal}, $\mstr_\Lambda$ is compact for all finite $\Lambda\subseteq \lang$. Thus, in particular, $\mstr_\Pi$ is compact. Now, since $\Pi$ is not satisfiable, i.e., $\Mod(\Pi)=\emptyset$, $\mstr_\Pi=\mstr$. Hence, $\mstr$ is compact. 
\end{proof}

The notion of finitary-ness is non-semantic. Thus, the main importance of Theorem~\ref{thm:char_finitary} lies in the fact that it provides a semantic equivalent of a non-semantic concept. However, given a finitary logical structure, we might often like to investigate whether it is compact with respect to a particular semantics. In order to answer questions like this, instead of checking compactness of a family of $\amst$s (as suggested by Theorem~\ref{thm:char_finitary}), a direct approach is perhaps more helpful. In the final theorem of this section, we investigate this issue and connect the compactness of an $\amst$ with the finitary-ness of its induced logical structure via \emph{explosiveness}.

\begin{dfn}[Explosive Set]
    Suppose $\mathcal{S}=(\lang,\vdash)$ is a logical structure. A set $\Gamma\subseteq\lang$ is said to be $\mathcal{S}$-\emph{explosive} or $\mathcal{S}$-\emph{trivial} if $\Gamma\vdash\alpha$ for all $\alpha\in\lang$, i.e., $C_\vdash(\Gamma)=\lang$.
\end{dfn}

\begin{thm}[{\cite[Theorem 3.9]{RoyBasuChakraborty2025}}]\label{thm:unsat=>triv}
    Suppose $\mstr=(\mathbf{M},\models,\pow(\lang))$ is a normal $\amst$ and $\mathcal{S}=(\lang,\vdash)$ is the logical structure induced by $\mstr$. If $\Gamma\subseteq\lang$ is not satisfiable, then it is trivial. The converse holds if $\mstr$ is normal and $\lang$ is not satisfiable.
\end{thm}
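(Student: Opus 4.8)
The plan is to unwind both implications directly from the definition of $\vdash_{\mstr}$ in terms of the map $\Mod$, so the whole argument reduces to elementary set inclusions together with the one structural fact about normal $\amst$s recorded in Theorem~\ref{thm:Prop_of_Mod}(i).

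For the forward implication I would argue as follows. Suppose $\Gamma\subseteq\lang$ is not satisfiable in $\mstr$; by definition this means there is no $m\in\mathbf{M}$ with $m\models\Gamma$, i.e.\ $\Mod(\Gamma)=\emptyset$. Since $\emptyset\subseteq\Mod(\{\alpha\})$ for every $\alpha\in\lang$, Definition~\ref{dfn:mlogstr} gives $\Gamma\vdash\alpha$ for all $\alpha\in\lang$, that is, $C_{\vdash}(\Gamma)=\lang$, so $\Gamma$ is trivial. I would remark that this half in fact uses neither normality nor any assumption on the satisfiability of $\lang$.

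For the converse I would use the stated hypotheses (normality of $\mstr$ and unsatisfiability of $\lang$). Assume $\Gamma$ is trivial, i.e.\ $\Gamma\vdash\alpha$ for every $\alpha\in\lang$. By Definition~\ref{dfn:mlogstr} this says $\Mod(\Gamma)\subseteq\Mod(\{\alpha\})$ for every $\alpha\in\lang$, hence
\[
\Mod(\Gamma)\subseteq\bigcap_{\alpha\in\lang}\Mod(\{\alpha\}).
\]
Now normality lets me invoke Theorem~\ref{thm:Prop_of_Mod}(i) with $\Gamma$ replaced by $\lang$, giving $\bigcap_{\alpha\in\lang}\Mod(\{\alpha\})=\Mod(\lang)$. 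Since $\lang$ is not satisfiable, $\Mod(\lang)=\emptyset$, whence $\Mod(\Gamma)=\emptyset$; that is, $\Gamma$ is not satisfiable.

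There is essentially no hard step here: the only point that requires care is recognizing that ``$\Gamma$ is trivial'' is literally the assertion that $\Mod(\Gamma)$ sits inside every principal model class $\Mod(\{\alpha\})$, and that collapsing the resulting intersection to $\Mod(\lang)$ is exactly where normality (via Theorem~\ref{thm:Prop_of_Mod}(i)) and the unsatisfiability of $\lang$ are consumed. If anything is a mild subtlety, it is bookkeeping which hypotheses are needed where — the first implication is unconditional, while the converse genuinely needs both that $\mstr$ is normal and that the whole language is unsatisfiable, the latter being what prevents a ``trivial but satisfiable'' $\Gamma$ (as would occur, e.g., if some model satisfied all of $\lang$).
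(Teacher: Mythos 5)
Your proof is correct. Note that the paper itself does not prove this statement --- it is imported from \cite[Theorem 3.9]{RoyBasuChakraborty2025} without proof --- but your argument is exactly the natural one and mirrors the proof the paper does give for the closely related Theorem~\ref{thm:explosive<=>unsat}: the forward direction is the same unconditional observation that $\Mod(\Gamma)=\emptyset$ forces $\Gamma\vdash\alpha$ for every $\alpha$, and for the converse you correctly identify that, in the absence of a single unsatisfiable singleton (which is what Theorem~\ref{thm:explosive<=>unsat} assumes instead), normality must be invoked via Theorem~\ref{thm:Prop_of_Mod}(i) to collapse $\bigcap_{\alpha\in\lang}\Mod(\{\alpha\})$ to $\Mod(\lang)=\emptyset$. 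Your bookkeeping of which hypotheses are used where is accurate.
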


\begin{thm}\label{thm:compact<=>fin}
    Suppose $\mstr=(\mathbf{M},\models,\pow(\lang))$ is a normal $\amst$ such that, for all $\alpha\in \lang$, there exists  $\Lambda_\alpha\subseteq \lang$ such that $\Mod(\Lambda_\alpha)=\mathbf{M}\setminus \Mod(\{\alpha\})$. If $\mstr$ is compact, then $\mathcal{S}=(\lang,\vdash)$, the logical structure induced by it, is finitary. The converse holds if for all $\alpha\in \lang$, $\Lambda_\alpha$ is finite.
\end{thm}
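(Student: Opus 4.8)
The plan is to prove the two directions separately, leaning on the anti-model structures $\mstr_\alpha$ together with Theorem~\ref{thm:char_finitary} for the ``compact $\Rightarrow$ finitary'' direction, and on Corollary~\ref{cor:finunsat=>compact} for the converse. Throughout I would use normality of $\mstr$ (via Theorem~\ref{thm:Prop_of_Mod}) to pass freely between $\Mod$ of a union and the intersection of the corresponding singleton model-classes.

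\textbf{Forward direction.} Assume $\mstr$ is compact. By Theorem~\ref{thm:char_finitary} it is enough to check that $\mstr_\alpha$ is compact for each $\alpha\in\lang$. Fix $\alpha$; note that $\mstr_\alpha$ is again normal, since its satisfaction relation is a restriction of $\models$, and that, by hypothesis, $\mathbf{M}_\alpha=\Mod(\Lambda_\alpha)$. The crux is a translation observation: for $\Gamma\subseteq\lang$ and $m\in\mathbf{M}_\alpha$, one has $m\models_\alpha\Gamma$ iff $m\models\Gamma\cup\Lambda_\alpha$ in $\mstr$ (the forward implication because $m\in\Mod(\Lambda_\alpha)$ and $\mstr$ is normal; the backward one is immediate). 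Hence $\Gamma$ is satisfiable in $\mstr_\alpha$ iff $\Gamma\cup\Lambda_\alpha$ is satisfiable in $\mstr$. Now suppose $\Gamma$ is finitely satisfiable in $\mstr_\alpha$. I would show $\Gamma\cup\Lambda_\alpha$ is finitely satisfiable in $\mstr$: a finite $\Delta\subseteq\Gamma\cup\Lambda_\alpha$ decomposes as $\Delta=\Delta_1\cup\Delta_2$ with $\Delta_1\subseteq\Gamma$ and $\Delta_2\subseteq\Lambda_\alpha$ finite; picking $n\in\mathbf{M}_\alpha$ with $n\models_\alpha\Delta_1$, normality of $\mstr$ and $n\in\Mod(\Lambda_\alpha)$ give $n\models\Delta$. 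Compactness of $\mstr$ then makes $\Gamma\cup\Lambda_\alpha$ satisfiable in $\mstr$, hence $\Gamma$ satisfiable in $\mstr_\alpha$. This yields compactness of $\mstr_\alpha$, and Theorem~\ref{thm:char_finitary} closes the case.

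\textbf{Converse.} Assume $\mathcal{S}$ is finitary and each $\Lambda_\alpha$ is finite. Since $\mstr$ is normal, satisfiability always implies finite satisfiability, so it suffices to show that every finitely satisfiable $\Gamma\subseteq\lang$ is satisfiable. Suppose not: $\Gamma$ is finitely satisfiable but $\Mod(\Gamma)=\emptyset$. Choose any $\alpha\in\lang$ (possible since $\lang$ is infinite); then $\Mod(\Gamma)=\emptyset\subseteq\Mod(\{\alpha\})$, i.e.\ $\Gamma\vdash\alpha$, so by finitary-ness there is a finite $\Gamma_0\subseteq\Gamma$ with $\Mod(\Gamma_0)\subseteq\Mod(\{\alpha\})$. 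By Theorem~\ref{thm:Prop_of_Mod}(iii) and the hypothesis $\Mod(\Lambda_\alpha)=\mathbf{M}\setminus\Mod(\{\alpha\})$, we get $\Mod(\Gamma_0\cup\Lambda_\alpha)=\Mod(\Gamma_0)\cap(\mathbf{M}\setminus\Mod(\{\alpha\}))=\emptyset$, so $\Gamma_0\cup\Lambda_\alpha$ is a \emph{finite} unsatisfiable set. By Corollary~\ref{cor:finunsat=>compact} (applicable as $\mstr$ is normal and $\mathcal{S}$ is finitary), $\mstr$ is compact; but then the finitely satisfiable set $\Gamma$ would be satisfiable, contradicting $\Mod(\Gamma)=\emptyset$. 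Hence every finitely satisfiable set is satisfiable, i.e.\ $\mstr$ is compact.

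The routine parts are the repeated passages between $\Mod$ of unions/intersections and the individual sets $\Mod(\{\cdot\})$, all justified by normality through Theorem~\ref{thm:Prop_of_Mod}. The step I expect to require the most care is the translation observation in the forward direction, and in particular that it survives restriction to \emph{finite} subsets: a finite subset of $\Gamma\cup\Lambda_\alpha$ need not be of the form $\Gamma'\cup\Lambda_\alpha$, which is exactly why one must exploit that any model witnessing finite satisfiability of $\Gamma$ in $\mstr_\alpha$ already satisfies all of $\Lambda_\alpha$.
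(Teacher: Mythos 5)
Your proof is correct, but it is organized differently from the paper's, most visibly in the forward direction. The paper argues directly from the definition of finitary-ness: given $\Gamma\vdash\alpha$, it splits into cases on whether $\Gamma$ is satisfiable and, in the substantive case, shows that the failure of every finite $\Gamma'\vdash\alpha$ makes $\Gamma\cup\Lambda_\alpha$ finitely satisfiable, whence compactness of $\mstr$ produces a model of $\Gamma$ outside $\Mod(\{\alpha\})$. You instead route the argument through Theorem \ref{thm:char_finitary}, proving that each anti-model structure $\mstr_\alpha$ is compact via the translation $m\models_\alpha\Gamma$ iff $m\models\Gamma\cup\Lambda_\alpha$ (using $\mathbf{M}_\alpha=\Mod(\Lambda_\alpha)$). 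The underlying computation is the same --- finite satisfiability of $\Gamma\cup\Lambda_\alpha$ plus compactness of $\mstr$ --- but your packaging reuses the characterization theorem, avoids the paper's case split on satisfiability of $\Gamma$, and isolates the role of the hypothesis $\Mod(\Lambda_\alpha)=\mathbf{M}\setminus\Mod(\{\alpha\})$ as an identification of the carrier of $\mstr_\alpha$; the care you take with finite subsets of $\Gamma\cup\Lambda_\alpha$ that are not of the form $\Gamma'\cup\Lambda_\alpha$ is exactly the right point to worry about, and your resolution (any witness already lies in $\Mod(\Lambda_\alpha)$) is sound. For the converse, both proofs end by exhibiting a finite unsatisfiable set and invoking Corollary \ref{cor:finunsat=>compact}; the paper simply observes that $\Lambda_\alpha\cup\{\alpha\}$ is such a set, since $\Mod(\Lambda_\alpha\cup\{\alpha\})=\Mod(\Lambda_\alpha)\cap\Mod(\{\alpha\})=\emptyset$, whereas you reach a finite unsatisfiable set $\Gamma_0\cup\Lambda_\alpha$ through a proof by contradiction and an appeal to finitary-ness. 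Your detour is logically valid but unnecessary: the set $\Lambda_\alpha\cup\{\alpha\}$ does the job with no auxiliary $\Gamma$ at all.
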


\begin{proof}
    Suppose $\mstr$ is compact. Let $\Gamma\cup\{\alpha\}\subseteq \lang$ such that $\Gamma\vdash\alpha$. Then $\Mod(\Gamma)\subseteq \Mod(\{\alpha\})$. If $\Gamma$ is not satisfiable, then by compactness, there exists a finite $\Gamma_0\subseteq\Gamma$ that is not satisfiable. By Theorem \ref{thm:unsat=>triv}, this implies that $\Gamma_0$ is trivial. Then, in particular, $\Gamma_0\vdash\alpha$. 
    
    Now, suppose $\Gamma$ is satisfiable and $\alpha\in \lang$ such that $\Gamma\vdash\alpha$. If possible, let $\Gamma^\prime\not\vdash\alpha$ for all finite $\Gamma^\prime\subseteq \Gamma$. This implies that $\Mod(\Gamma)\subseteq \Mod(\{\alpha\})$ but for all finite $\Gamma^\prime\subseteq \Gamma$, $\Mod(\Gamma^\prime)\not\subseteq\Mod(\{\alpha\})$. So, for each finite $\Gamma^\prime\subseteq\Gamma$, there exists $m_{\Gamma^\prime}\in \mathbf{M}$ such that $m_{\Gamma^\prime}\models \Gamma^\prime$, but $m_{\Gamma^\prime}\not\models\{\alpha\}$, i.e., $m_{\Gamma^\prime}\models \Lambda_\alpha$. Then, by normality of $\mstr$, $m_{\Gamma^\prime}$ satisfies every finite subset of $\Lambda_\alpha$. Again, by normality of $\mstr$, this implies that $m_{\Gamma^\prime}$ satisfies every finite subset of $\Gamma\cup\Lambda_\alpha$, i.e., $\Gamma\cup\Lambda_\alpha$ is finitely satisfiable. Then, as $\mstr$ is compact, $\Gamma\cup\Lambda_\alpha$ is satisfiable. Let $n\in \mathbf{M}$ be such that $n\models \Gamma\cup\Lambda_\alpha$. So, by normality of $\mstr$, $n\models \Gamma$ and $n\models \Lambda_\alpha$, i.e., $n\not\models \{\alpha\}$. This, however, contradicts our assumption that $\Gamma\vdash\alpha$. Thus, there must exist a finite $\Gamma^\prime\subseteq\Gamma$ such that $\Gamma^\prime\vdash\alpha$. Hence, $\mathcal{S}$ is finitary.

    Conversely, suppose $\mathcal{S}$ is finitary. Moreover, suppose that, for every $\alpha\in\lang$, there exists a finite $\Lambda_\alpha\subseteq\lang$ such that $\Mod(\Lambda_\alpha)=\mathbf{M}\setminus\Mod(\{\alpha\})$. This implies that $\Mod(\Lambda_\alpha)\cap\Mod(\{\alpha\})=\emptyset$. Since $\mstr$ is normal, $\Mod(\Lambda_\alpha)\cap\Mod(\{\alpha\})=\Mod(\Lambda_\alpha\cup\{\alpha\})$ by Theorem \ref{thm:Prop_of_Mod}(iii). Thus, $\Mod(\Lambda_\alpha\cup\{\alpha\})=\emptyset$, i.e., $\Lambda_\alpha\cup\{\alpha\}$ is not satisfiable. So, $\Lambda_\alpha\cup\{\alpha\}$ is a finite unsatisfiable set. Since $\mstr$ is normal and $\mathcal{S}$ is finitary, by Corollary \ref{cor:finunsat=>compact}, $\mstr$ is compact. 
\end{proof}

As an application of this result, we show the compactness of the paracomplete logic $\mbst$ that was introduced in \cite{BasuJain2025} to study the concept of probabilities on sets in the presence of undeterminedness.

\begin{exa}
    Suppose $\mbst=(\lang,\vdash_{\mbst})$ is the logic described in \cite{BasuJain2025} and $\mathcal{V}$ be the set of all $\mbst$-valuations. Let $\mstr_{\mbst}=(\mathcal{V},\models,\pow(\lang))$ be the $\amst$ where $\models\,\subseteq\pow(\lang)\times\lang$ is defined as follows. For any $v\in\mathcal{V}$ and $\Gamma\in\pow(\lang)$, $v\models\Gamma$ iff $v(\Gamma)=\{1\}$, i.e., $v(\gamma)=1$ for all $\gamma\in\Gamma$.  Clearly, for all $v\in\mathcal{V}$ and any $\Gamma\subseteq\lang$, $v\models\Gamma$ iff $v\models\{\gamma\}$ for all $\gamma\in\Gamma$. Thus, $\mstr_{\mbst}$ is normal.

    Let $(\lang,\vdash_{\mstr_{\mbst}})$ be the logical structure induced by $\mstr_{\mbst}$. Then, for any $\Gamma\cup\{\alpha\}\subseteq\lang$, $\Gamma\vdash_{\mstr_{\mbst}}\alpha$ iff, for all $v\in\mathcal{V}$, if $v\models\Gamma$ then $v\models\{\alpha\}$, i.e., $\Gamma\vdash_{\mstr_{\mbst}}\alpha$ iff, for all $v\in\mathcal{V}$, if $v(\Gamma)=\{1\}$ then $v(\{\alpha\})=1$. Thus, $\Gamma\vdash_{\mstr_{\mbst}}\alpha$ iff $\Gamma\vdash_{\mbst}\alpha$. So, the logical structure induced by $\mstr_{\mbst}$ is $\mbst$.

    Now, for any $\alpha\in\lang$ and any $v\in\mathcal{v}$,
    if $v(\alpha)=0$, then $v((\alpha\limp\fstar\alpha)\land(\alpha\limp\neg\fstar\alpha))=1$. Conversely, if $v((\alpha\limp\fstar\alpha)\land(\alpha\limp\neg\fstar\alpha))=1$, then $v(\alpha\limp\fstar\alpha)=1$ and $v(\alpha\limp\neg\fstar\alpha)=1$. So, if $v(\alpha)=1$, then this implies that $v(\fstar\alpha)=1=v(\neg\fstar\alpha)$, which is impossible. Thus, $v(\alpha)=0$. Hence, $v(\alpha)=0$ iff $v((\alpha\limp\fstar\alpha)\land(\alpha\limp\neg\fstar\alpha))=1$.

    Let $\Lambda_\alpha=\{(\alpha\limp\fstar\alpha)\land(\alpha\limp\neg\fstar\alpha)\}$. Then, by the above arguments,
    \[
    \begin{array}{lcl}
         \Mod(\Lambda_\alpha)&=&\{v\in\mathcal{V}\mid\,v\models\Lambda_\alpha\}\\
         &=&\{v\in\mathcal{V}\mid\,v((\alpha\limp\fstar\alpha)\land(\alpha\limp\neg\fstar\alpha))=1\}\\
         &=&\{v\in\mathcal{V}\mid\,v(\alpha)=0\}\\
         &=&\{v\in\mathcal{V}\mid\,v\not\models\{\alpha\}\}\\
         &=&\mathcal{V}\setminus\Mod(\{\alpha\})\\
    \end{array}
    \]
    Since $\Lambda_\alpha$ is finite, we can conclude that for each $\alpha\in\lang$, there exists a finite $\Lambda_\alpha\subseteq\lang$ such that $\Mod(\Lambda_\alpha)=\mathcal{V}\setminus\Mod(\{\alpha\})$. Hence, by Theorem \ref{thm:compact<=>fin}, $\mstr_{\mbst}$ is compact.
\end{exa}

\section{Semantic explosion principles\label{sec:sem}}
  We now define below some new notions of explosion for abstract model structures. These may be thought of as the `semantic' analogues of the explosion principles introduced in \cite{BasuRoy2024}. In these new principles, explosion has been replaced by unsatisfiability. As mentioned earlier, explosion from the semantic perspective is the result of unsatisfiability of a (contradictory) set of hypotheses. The following theorem gives a partial justification for this.

\begin{thm}\label{thm:explosive<=>unsat}
    Suppose $\mstr=(\mathbf{M},\models,\pow(\lang))$ is an $\amst$ and $\mathcal{S}=(\lang,\vdash)$ is the logical structure induced by $\mstr$. Then, for any $\Gamma\subseteq\lang$, if $\Gamma$ is not satisfiable, then it is $\mathcal{S}$-explosive. The converse holds in case there exists $\alpha\in\lang$ such that $\{\alpha\}$ is not satisfiable.
\end{thm}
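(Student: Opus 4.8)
The plan is to unwind the definition of $\vdash_{\mstr}$ in terms of $\Mod$ for both directions. For the forward implication, suppose $\Gamma\subseteq\lang$ is not satisfiable, i.e., $\Mod(\Gamma)=\emptyset$. Then for any $\alpha\in\lang$ we trivially have $\Mod(\Gamma)=\emptyset\subseteq\Mod(\{\alpha\})$, and so by Definition \ref{dfn:mlogstr} we get $\Gamma\vdash\alpha$. Since $\alpha$ was arbitrary, $\Gamma$ is $\mathcal{S}$-explosive. This direction needs nothing beyond the definition of the induced logical structure and the fact that the empty set is a subset of everything, so normality is not required here.

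For the converse, assume there is some $\alpha_0\in\lang$ with $\{\alpha_0\}$ not satisfiable, i.e., $\Mod(\{\alpha_0\})=\emptyset$, and let $\Gamma\subseteq\lang$ be $\mathcal{S}$-explosive. Then in particular $\Gamma\vdash\alpha_0$, which by definition means $\Mod(\Gamma)\subseteq\Mod(\{\alpha_0\})=\emptyset$. Hence $\Mod(\Gamma)=\emptyset$, i.e., $\Gamma$ is not satisfiable. This is again just a matter of plugging the witnessing unsatisfiable singleton into the definition of explosiveness.

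I expect there to be no real obstacle here; the statement is essentially a reformulation of Theorem \ref{thm:unsat=>triv} with the normality hypothesis stripped away from the forward direction (which it does not need) and is in fact a direct consequence of the definition of $\vdash_{\mstr}$ via model-class inclusion. The only point worth a sentence of care is that the converse genuinely requires the side condition: without some unsatisfiable $\{\alpha\}$, an explosive set need not be unsatisfiable, and one should note that the hypothesis is used precisely to supply a formula whose model class is empty so that the inclusion $\Mod(\Gamma)\subseteq\Mod(\{\alpha_0\})$ forces $\Mod(\Gamma)=\emptyset$.
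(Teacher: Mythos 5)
Your proof is correct and takes essentially the same approach as the paper: both directions amount to unwinding $\Gamma\vdash\alpha$ as $\Mod(\Gamma)\subseteq\Mod(\{\alpha\})$, with the forward implication following from $\Mod(\Gamma)=\emptyset$ and the converse from plugging in the unsatisfiable singleton (the paper phrases the forward direction as a proof by contradiction, but the content is identical).
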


\begin{proof}
    Suppose $\Gamma\subseteq\lang$ is not satisfiable. Let $\Gamma$ be not $\mathcal{S}$-explosive. Then, there exists $\gamma\in\lang$ such that $\Gamma\not\vdash\gamma$. So, there exists $m\in\mathbf{M}$ such that $m\models\Gamma$ but $m\not\models\{\gamma\}$. This, however, implies that $\Gamma$ is satisfiable, which is a contradiction. Hence, $\Gamma$ is $\mathcal{S}$-trivial.

    Conversely, suppose $\Gamma$ is $\mathcal{S}$-explosive. Let $\alpha\in\lang$ be such that $\{\alpha\}$ is not satisfiable, i.e., $\Mod(\{\alpha\})=\emptyset$. Since $\Gamma$ is $\mathcal{S}$-explosive, $\Gamma\vdash\alpha$. Then, $\Mod(\Gamma)\subseteq\Mod(\{\alpha\})$. So, $\Mod(\Gamma)=\emptyset$, i.e., $\Gamma$ is not satisfiable.
\end{proof} 
  
Due to the above arguments, it seems legitimate to refer to the principles introduced below as principles of explosion. Since compactness of $\amst$s is not assumed, in general, we get these principles in two variants: the $\mathsf{sat}$ (unsatisfiability) - variants and the $\mathsf{finsat}$ (finite unsatisfiability) - variants. These are separately taken up in the subsections below.

 \subsection{Semantic explosion principles - the \texorpdfstring{$\mathsf{sat}$}{sat} variants\label{subsec:sat}} 
\subsubsection{Definitions and interconnections}
\begin{dfn}[Semantic Explosion Principles: $\mathsf{sat}$ variants]\label{dfn:sat-explosion}
    Suppose $\mstr=(\mathbf{M},\models,\pow(\lang))$ is an $\amst$.
    \begin{enumerate}[label=(\roman*)]
        \item $\gecq$ holds in $\mstr$ if, for all $\alpha\in\lang$, there exists $\beta\in\lang$ such that $\{\alpha,\beta\}$ is not satisfiable.
        \item $\secq$ holds in $\mstr$ if, for all $\alpha\in\lang$, there exists $\Gamma\subseteq\lang$ such that $\Gamma\cup\{\alpha\}\subsetneq\lang$ and $\Gamma\cup\{\alpha\}$ is not satisfiable.
        \item $\specq$ holds in $\mstr$ if, for all $\Gamma\subsetneq\lang$, there exists $\alpha\in\lang$ such that $\Gamma\cup\{\alpha\}\subsetneq\lang$ and $\Gamma\cup\{\alpha\}$ is not satisfiable.
        \item $\pfecq$ holds in $\mstr$ if, for all $\Gamma\subsetneq\lang$, there exists $\Delta\subsetneq\lang$ such that $\Gamma\subseteq\Delta$ and $\Delta$ is not satisfiable.
    \end{enumerate}
    The letters $\mathsf{g},\mathsf{s}$ and the letter-pairs $\mathsf{sp},\mathsf{pf}$ stand for, respectively, `generalized,' `set-based,' `set-point,' and `point-free.' The postfix $\mathsf{sat}$ in the names of the above principles indicate that these are formulated in terms of (un)satisfiability.
\end{dfn}

\begin{rem}\label{rem:alt_secq}
    Suppose $\mstr=(\mathbf{M},\models,\pow(\lang))$ is an $\amst$. Suppose $\secq$ holds in $\mstr$ and $\alpha\in\lang$. Then, there exists $\Gamma\subseteq\lang$ such that $\alpha\in\Delta=\Gamma\cup\{\alpha\}\subsetneq\lang$ and $\Delta$ is not satisfiable.

    On the other hand, if, for every $\alpha\in\lang$, there exists $\Delta\subsetneq\lang$ such that $\alpha\in\Delta$ and $\Delta$ is not satisfiable, then $\Delta\cup\{\alpha\}=\Delta\subsetneq\lang$, which implies that $\secq$ holds. 

    Thus, an alternative formulation of $\secq$ is as follows. For all $\alpha\in\lang$, there exists $\Gamma\subsetneq\lang$ such that $\Gamma$ is not satisfiable.
\end{rem}

The next two theorems show that $\gecq,\secq,\specq,\pfecq$ are indeed the `semantic' analogues of gECQ, sECQ, spECQ, and pfECQ in \cite{BasuRoy2024}.

\begin{thm}\label{thm:amst->logstr}
    Suppose $\mstr=(\mathbf{M},\models,\pow(\lang))$ is an $\amst$ and $\mathcal{S}=(\lang,\vdash)$ is the logical structure induced by $\mstr$. 
    \begin{enumerate}[label=(\roman*)]
        \item If $\gecq$ holds in $\mstr$, then gECQ holds in $\mathcal{S}$.
        \item If $\secq$ holds in $\mstr$, then sECQ holds in $\mathcal{S}$.
        \item If $\specq$ holds in $\mstr$, then spECQ holds in $\mathcal{S}$.
        \item If $\pfecq$ holds in $\mstr$, then pfECQ holds in $\mathcal{S}$.
    \end{enumerate} 
\end{thm}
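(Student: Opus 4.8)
The plan is to read off all four implications directly from Theorem~\ref{thm:explosive<=>unsat}, whose first (and unconditional) half states that for any $\Gamma\subseteq\lang$, if $\Gamma$ is not satisfiable in $\mstr$, then $\Gamma$ is $\mathcal{S}$-explosive, i.e.\ $C_\vdash(\Gamma)=\lang$. Each of gECQ, sECQ, spECQ, pfECQ from \cite{BasuRoy2024} is obtained from its $\mathsf{sat}$-counterpart in Definition~\ref{dfn:sat-explosion} by replacing the clause ``$\dots$ is not satisfiable'' with ``$\dots$ is $\mathcal{S}$-explosive'' (equivalently, ``$\dots$ is $\mathcal{S}$-trivial''), while leaving every structural side-condition (membership, $\subsetneq\lang$, $\Gamma\subseteq\Delta$, and so on) untouched. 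So each implication reduces to a single substitution powered by Theorem~\ref{thm:explosive<=>unsat}.

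Concretely, for (i): fix $\alpha\in\lang$; by $\gecq$ choose $\beta\in\lang$ with $\{\alpha,\beta\}$ not satisfiable; Theorem~\ref{thm:explosive<=>unsat} then gives that $\{\alpha,\beta\}$ is $\mathcal{S}$-explosive, which is exactly what gECQ asks. For (ii): fix $\alpha\in\lang$; by $\secq$ (or more conveniently by its reformulation in Remark~\ref{rem:alt_secq}) choose $\Gamma$ with $\Gamma\cup\{\alpha\}\subsetneq\lang$ and $\Gamma\cup\{\alpha\}$ not satisfiable; Theorem~\ref{thm:explosive<=>unsat} upgrades ``not satisfiable'' to ``$\mathcal{S}$-explosive,'' and the properness condition $\Gamma\cup\{\alpha\}\subsetneq\lang$ transfers verbatim, yielding sECQ. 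Cases (iii) and (iv) have the same shape: in (iii) start from an arbitrary $\Gamma\subsetneq\lang$, extract the witness $\alpha$ with $\Gamma\cup\{\alpha\}\subsetneq\lang$ unsatisfiable, and apply Theorem~\ref{thm:explosive<=>unsat}; in (iv) start from an arbitrary $\Gamma\subsetneq\lang$, extract $\Delta\subsetneq\lang$ with $\Gamma\subseteq\Delta$ and $\Delta$ unsatisfiable, and apply Theorem~\ref{thm:explosive<=>unsat}.

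I do not expect a real obstacle here: all the content sits in Theorem~\ref{thm:explosive<=>unsat}, and note that only its unconditional direction is used --- the converse (which needs a singleton unsatisfiable set) plays no role. The only point requiring care is bookkeeping: one must confirm that the definitions of gECQ, sECQ, spECQ, pfECQ in \cite{BasuRoy2024} carry exactly the same quantifier structure and the same cardinality/properness side-conditions as their $\mathsf{sat}$-analogues, so that nothing beyond the (un)satisfiability clause changes. Granting that alignment, which is the intended reading, the theorem follows immediately.
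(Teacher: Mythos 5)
Your proposal is correct and is essentially the paper's own argument: the paper simply re-derives the ``unsatisfiable $\implies$ $\mathcal{S}$-explosive'' step inline for cases (i) and (ii) (via the same contradiction with a witnessing $m\in\mathbf{M}$ that appears in the proof of Theorem~\ref{thm:explosive<=>unsat}), whereas you cite Theorem~\ref{thm:explosive<=>unsat} directly, which is legitimate since it precedes this result. The structural side-conditions transfer verbatim exactly as you describe, so nothing is missing.
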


\begin{proof}
    \begin{enumerate}[label=(\roman*)]
        \item Suppose $\gecq$ holds in $\mstr$. Let $\alpha\in\lang$. Then, there exists $\beta\in\lang$ such that $\{\alpha,\beta\}$ is not satisfiable. Now, suppose there exists $\gamma\in\lang$ such that $\{\alpha,\beta\}\not\vdash\gamma$. So, there exists $m\in\mathbf{M}$ such that $m\models\{\alpha,\beta\}$ but $m\not\models\{\gamma\}$. However, this implies that $\{\alpha,\beta\}$ is satisfiable, which is a contradiction. Thus, $\{\alpha,\beta\}\vdash\gamma$ for all $\gamma\in\lang$. Hence, for every $\alpha\in\lang$, there exists $\beta\in\lang$ such that $\{\alpha,\beta\}$ is $\mathcal{S}$-explosive, i.e., gECQ holds in $\mathcal{S}$.

        \item Suppose $\secq$ holds in $\mstr$ and let $\alpha\in\lang$. Then, by Remark \ref{rem:alt_secq}, there exists $\Gamma\subsetneq\lang$ such that $\alpha\in\Gamma$ and $\Gamma$ is not satisfiable. Now, suppose there exists $\gamma\in\lang$ such that $\Gamma\not\vdash\gamma$. Then, there exists $m\in\mathbf{M}$ such that $m\models\Gamma$ but $m\not\models\{\gamma\}$. However, this implies that $\Gamma$ is satisfiable, which is a contradiction. Thus, $\Gamma\vdash\gamma$ for all $\gamma\in\lang$, i.e., $\Gamma$ is $\mathcal{S}$-explosive. Hence, sECQ holds in $\mathcal{S}$.
    \end{enumerate}
    Statements (iii) and (iv) can be established by similar arguments.
\end{proof}

\begin{thm}\label{thm:logstr->amst}
    Suppose $\mathcal{S}=(\lang,\vdash)$ is a logical structure and $\mstr=(\lang,\models,\pow(\lang))$ be an $\amst$, where $\models\,\subseteq\lang\times\pow(\lang)$ is defined as follows. For all $\Gamma\cup\{\alpha\}\subseteq\lang$, $\alpha\models\Gamma$ iff $\Gamma\not\vdash\alpha$.
    \begin{enumerate}[label=(\roman*)]
        \item gECQ holds in $\mathcal{S}$ iff $\gecq$ holds in $\mstr$.
        \item sECQ holds in $\mathcal{S}$ iff $\secq$ holds in $\mstr$.
        \item spECQ holds in $\mathcal{S}$ iff $\specq$ holds in $\mstr$.
        \item pfECQ holds in $\mathcal{S}$ iff $\pfecq$ holds in $\mstr$.
    \end{enumerate}
\end{thm}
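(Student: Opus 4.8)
The plan is to exploit the very explicit definition of the satisfaction relation in this "canonical" $\amst$: here a point $\alpha \in \lang$ satisfies a set $\Gamma$ precisely when $\Gamma \not\vdash \alpha$. So for any set $\Gamma \subseteq \lang$, the set of its "models" is $\Mod(\Gamma) = \{\alpha \in \lang \mid \Gamma \not\vdash \alpha\} = \lang \setminus C_\vdash(\Gamma)$. Consequently, $\Gamma$ is \emph{not} satisfiable in $\mstr$ iff $\Mod(\Gamma) = \emptyset$ iff $C_\vdash(\Gamma) = \lang$ iff $\Gamma$ is $\mathcal{S}$-explosive. This single observation — call it $(\star)$ — translates "not satisfiable in $\mstr$" into "$\mathcal{S}$-explosive" verbatim, and it is the only real content of the theorem.

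Given $(\star)$, each of the four equivalences is a mechanical rewrite. For (i): $\gecq$ holds in $\mstr$ means for all $\alpha$ there is $\beta$ with $\{\alpha,\beta\}$ not satisfiable, i.e., by $(\star)$, with $\{\alpha,\beta\}$ $\mathcal{S}$-explosive — and that is exactly the statement that gECQ holds in $\mathcal{S}$. For (ii): using the alternative formulation of $\secq$ from Remark~\ref{rem:alt_secq} (for all $\alpha$ there is $\Gamma \subsetneq \lang$ with $\alpha \in \Gamma$ and $\Gamma$ not satisfiable), apply $(\star)$ to get: for all $\alpha$ there is a proper subset $\Gamma$ containing $\alpha$ that is $\mathcal{S}$-explosive — which is sECQ for $\mathcal{S}$ (one should check that the sECQ of \cite{BasuRoy2024} is phrased, or equivalently rephrased, in exactly this way; presumably it says "for all $\alpha$ there is a proper explosive set containing $\alpha$", matching the Remark). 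Statements (iii) and (iv) are identical in spirit: $\specq$ in $\mstr$ says for every proper $\Gamma$ there is $\alpha$ with $\Gamma \cup \{\alpha\}$ proper and unsatisfiable, which by $(\star)$ is: for every proper $\Gamma$ there is a proper explosive extension $\Gamma \cup \{\alpha\}$ — i.e. spECQ; and $\pfecq$ in $\mstr$ says every proper $\Gamma$ extends to a proper unsatisfiable $\Delta$, i.e. by $(\star)$ to a proper explosive $\Delta$ — i.e. pfECQ. I would write out (i) and (ii) in full and remark that (iii), (iv) follow by the same substitution.

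**The main obstacle** — if there is one — is purely bookkeeping rather than mathematical: making sure the syntactic principles gECQ, sECQ, spECQ, pfECQ of \cite{BasuRoy2024} are stated in precisely the forms that $(\star)$ produces, in particular that the "proper subset" side conditions ($\Gamma \cup \{\alpha\} \subsetneq \lang$, $\Delta \subsetneq \lang$) line up on both sides. Since the $\amst$ here has underlying set of points equal to $\lang$ itself and $\models$ is defined directly from $\vdash$, the subset conditions are literally the same symbols on both sides, so no genuine matching work is needed; one only has to invoke Remark~\ref{rem:alt_secq} to massage $\secq$ into the shape where $\alpha$ is a member (rather than potentially outside) of the witnessing set. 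Everything else is a one-line application of $(\star)$, so the proof is short.

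\begin{proof}
    By definition of $\models$, for any $\Gamma\subseteq\lang$ we have
    \[
    \Mod(\Gamma)=\{\alpha\in\lang\mid\,\alpha\models\Gamma\}=\{\alpha\in\lang\mid\,\Gamma\not\vdash\alpha\}=\lang\setminus C_\vdash(\Gamma).
    \]
    Hence, for any $\Gamma\subseteq\lang$,
    \[
    \Gamma\hbox{ is not satisfiable in }\mstr\iff\Mod(\Gamma)=\emptyset\iff C_\vdash(\Gamma)=\lang\iff\Gamma\hbox{ is }\mathcal{S}\hbox{-explosive.}\tag{$\star$}
    \]

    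(i) By $(\star)$, for $\alpha,\beta\in\lang$ the set $\{\alpha,\beta\}$ is not satisfiable in $\mstr$ iff $\{\alpha,\beta\}$ is $\mathcal{S}$-explosive. Therefore, $\gecq$ holds in $\mstr$ (for all $\alpha\in\lang$ there exists $\beta\in\lang$ such that $\{\alpha,\beta\}$ is not satisfiable) iff for all $\alpha\in\lang$ there exists $\beta\in\lang$ such that $\{\alpha,\beta\}$ is $\mathcal{S}$-explosive, i.e., iff gECQ holds in $\mathcal{S}$.

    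(ii) By Remark \ref{rem:alt_secq}, $\secq$ holds in $\mstr$ iff, for all $\alpha\in\lang$, there exists $\Gamma\subsetneq\lang$ with $\alpha\in\Gamma$ such that $\Gamma$ is not satisfiable in $\mstr$. By $(\star)$, this holds iff, for all $\alpha\in\lang$, there exists $\Gamma\subsetneq\lang$ with $\alpha\in\Gamma$ such that $\Gamma$ is $\mathcal{S}$-explosive, i.e., iff sECQ holds in $\mathcal{S}$.

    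The equivalences in (iii) and (iv) follow by applying $(\star)$ in the same way: $\specq$ holds in $\mstr$ iff, for all $\Gamma\subsetneq\lang$, there exists $\alpha\in\lang$ with $\Gamma\cup\{\alpha\}\subsetneq\lang$ and $\Gamma\cup\{\alpha\}$ $\mathcal{S}$-explosive, i.e., iff spECQ holds in $\mathcal{S}$; and $\pfecq$ holds in $\mstr$ iff, for all $\Gamma\subsetneq\lang$, there exists $\Delta\subsetneq\lang$ with $\Gamma\subseteq\Delta$ and $\Delta$ $\mathcal{S}$-explosive, i.e., iff pfECQ holds in $\mathcal{S}$.
\end{proof}
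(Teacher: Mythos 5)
Your proof is correct and follows essentially the same route as the paper: the paper's proof of each item carries out, inline, exactly the equivalence you isolate as $(\star)$ (a set is unsatisfiable in this canonical $\amst$ iff it is $\mathcal{S}$-explosive), including the appeal to Remark~\ref{rem:alt_secq} for part (ii). Factoring $(\star)$ out once and applying it four times is just a cleaner organization of the same argument.
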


\begin{proof}
    \begin{enumerate}[label=(\roman*)]
        \item Suppose gECQ holds in $\mathcal{S}$. Let $\alpha\in\lang$. Then, there exists $\beta\in\lang$ such that, for all $\gamma\in\lang$, $\{\alpha,\beta\}\vdash\gamma$. Now, suppose $\{\alpha,\beta\}$ is satisfiable. Then there exists $\delta\in\lang$ such that $\delta\models\{\alpha,\beta\}$. This implies that $\{\alpha,\beta\}\not\vdash\delta$. This is a contradiction. Thus, $\{\alpha,\beta\}$ is not satisfiable. Hence, $\gecq$ holds in $\mstr$.

        Conversely, let $\gecq$ holds in $\mstr$. Then, there exists $\beta\in\lang$ such that $\{\alpha,\beta\}$ is not satisfiable, i.e., for all $\gamma\in \lang$, $\gamma\not\models\{\alpha,\beta\}$, or equivalently, for all $\gamma\in \lang$, $\{\alpha,\beta\}\vdash\gamma$. We have thus shown that for all $\alpha\in \lang$, there exists $\beta\in \lang$ such that $\{\alpha,\beta\}$ is trivial. Hence, gECQ holds in $\mathcal{S}$.
        
        \item Suppose sECQ holds in $\mathcal{S}$. Let $\alpha\in\lang$. Then, there exists $\Gamma\subsetneq\lang$ such that $\alpha\in\Gamma$ and $\Gamma\vdash\gamma$ for all $\gamma\in\lang$. This implies that $\gamma\not\models\Gamma$ for all $\gamma\in\lang$, i.e., $\Gamma$ is not satisfiable. Hence, $\secq$ holds in $\mstr$, by Remark \ref{rem:alt_secq}. Converse holds by an argument similar to (i).
    \end{enumerate}
    Statements (iii) and (iv) can be established using similar arguments.
\end{proof}

We now investigate the interconnections between the explosion principles introduced in this section so far.

\begin{thm}Implications Between $\mathsf{sat}$-variants]{\label{thm:sat_imp}}
    Suppose $\mstr=(\mathbf{M},\models,\pow(\lang))$ is an $\amst$. Then, the following statements hold.
    \begin{enumerate}[label=(\roman*)]
        \item If $\specq$ holds in $\mstr$, then $\gecq$ holds in it.\label{thm:specq=>gecq}
        \item If $\specq$ holds in $\mstr$ then $\pfecq$ also holds in it.\label{thm:specq=>pfecq}        
        \item If $\gecq$ holds in $\mstr$, then $\secq$ holds in $\mstr$.\label{thm:gecq=>secq}
        \item If $\specq$ holds in $\mstr$ then $\secq$ also holds in it.\label{thm:specq=>secq}  
        \item If $\pfecq$ holds in $\mstr$, then $\secq$ holds in $\mstr$.\label{thm:pfecq=>secq}        
    \end{enumerate}
\end{thm}

\begin{proof}
    \begin{enumerate}[label=(\roman*)]
        \item Suppose $\specq$ holds in $\mstr$. Then, in particular for any $\alpha\in\lang$, $\{\alpha\}\subsetneq\lang$ as $\lang$ is infinite, and there exists $\beta\in\lang$ such that $\{\alpha\}\cup\{\beta\}\subsetneq\lang$ and is not satisfiable. Thus, $\gecq$ holds in $\mstr$.
        \item Suppose $\specq$ holds in $\mstr$ and $\Gamma\subsetneq\lang$. Then, there exist $\alpha\in\Gamma$ such that $\Gamma\cup\{\alpha\}\subsetneq\lang$ and $\Gamma\cup\{\alpha\}$ is not satisfiable. Since $\Gamma\subseteq\Gamma\cup\{\alpha\}$, this implies that $\pfecq$ holds in $\mstr$.
        \item Suppose $\gecq$ holds in $\mstr$ and let $\alpha\in\lang$. Then, there exists $\beta\in\lang$ such that $\{\alpha,\beta\}$ is not satisfiable. Now, since $\lang$ is infinite, $\{\alpha,\beta\}\subsetneq\lang$. Since, $\{\alpha,\beta\}=\{\alpha,\beta\}\cup\{\alpha\}$ is not satisfiable, $\secq$ holds in $\mstr$.
        \item Since $\specq$ implies $\gecq$ by (i) and $\gecq$ implies $\secq$ by (iii), $\specq$ implies $\secq$.
        \item Suppose $\pfecq$ holds in $\mstr$ and $\alpha\in\lang$. Since $\lang$ is infinite, $\{\alpha\}\subsetneq\lang$. So, by $\pfecq$, there exists $\Delta\subsetneq\lang$ such that $\{\alpha\}\subseteq\Delta$ and $\Delta$ is not satisfiable. Then, $\alpha\in\Delta$ and hence, $\Delta\cup\{\alpha\}=\Delta\subsetneq\lang$ and is not satisfiable. Thus, $\secq$ holds in $\mstr$.
    \end{enumerate}
\end{proof}

The interconnections between the principles discussed so far are detailed in Figure \ref{fig:sat-exp}. The intended interpretation of the figure is as follows: given an $\amst$ if it satisfies the principle of explosion at the tail of an arrow, it also satisfies the principle of explosion at the tip of the same arrow.

\begin{figure}[H]
    \centering
\[\begin{tikzcd}
	{\mathsf{gECQ\text{-}sat}} && {\mathsf{sECQ\text{-}sat}} \\
	\\
	{\mathsf{spECQ\text{-}sat}} && {\mathsf{pfECQ\text{-}sat}}
	\arrow[from=1-1, to=1-3]
	\arrow[from=3-1, to=1-1]
	\arrow[from=3-1, to=3-3]
	\arrow[from=3-3, to=1-3]
\end{tikzcd}\]
    \caption{Semantic explosion principles - the $\mathsf{sat}$ variants}
    \label{fig:sat-exp}
\end{figure}
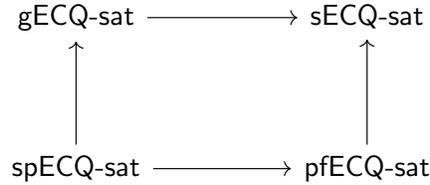

The rest of this subsection is aimed at showing that no other implication holds between these principles of explosion. To begin with, we first show that $\secq$ does not imply $\gecq$.

\begin{exa}[$\secq\centernot\implies\gecq$]\label{exm:secq≠>gecq}
    Let $\mstr=(\mathbf{M},\models,\pow(\lang))$ be an $\amst$, where $\models\,\mathbf{M}\times\pow(\lang)$ is defined as follows. For all $m\in\mathbf{M}$ and $\Gamma\subseteq\lang$, $m\models\Gamma$ iff $\Gamma$ is finite.

    Then, as $\lang$ is infinite, for any $\alpha\in\lang$, there exists $\beta\in\lang\setminus\{\alpha\}$. Moreover, $(\lang\setminus\{\alpha,\beta\})\cup\{\alpha\}=\lang\setminus\{\beta\}\subsetneq\lang$ and is not satisfiable as it is infinite. Thus, $\secq$ holds in $\mstr$.

    However, for any $\alpha\in\lang$, the set $\{\alpha,\beta\}$ for any $\beta\in\lang$ is satisfiable since it is finite. Thus, $\gecq$ fails in $\mstr$.
\end{exa}

\begin{rem}
\begin{enumerate}[label=(\roman*)]
    \item Note that $\secq$ does not imply $\specq$. Otherwise, since $\specq$ implies $\gecq$, it would follow that $\secq$ implies $\gecq$ $-$ contradicting the above example. 
    \item Let $\mstr$ be the same $\amst$ as in the above example. Let $\Gamma\subsetneq\lang$. If $\Gamma$ is finite, then as $\lang$ is infinite, there exists $\beta\in\lang\setminus\Gamma$. So, $\Gamma\subseteq\lang\setminus\{\beta\}\subsetneq\lang$. Clearly, $\lang\setminus\{\beta\}$ is infinite and hence not satisfiable. On the other hand, if $\Gamma$ is infinite, then it is already not satisfiable. Thus, $\pfecq$ holds in $\mstr$. Thus, Example \ref{exm:secq≠>gecq} also shows that $\pfecq$ does not imply $\gecq$.
    \item Finally, $\pfecq$ does not imply $\specq$. Otherwise, as $\specq$ implies $\gecq$ by Theorem \ref{thm:sat_imp}(i), it would follow that $\pfecq$ implies $\gecq$ $-$ contradicting (ii).
\end{enumerate}    
\end{rem}

We now show that the converse of Theorem~\ref{thm:sat_imp}(i) is not true. The same example also proves that neither $\gecq$ nor $\secq$ imply either $\specq$ or $\pfecq$.

\begin{exa}[$\gecq/\secq\centernot\implies\specq/\pfecq$]\label{exm:gecq/secq≠>pfecq/specq}
    Let $\mstr=(\NN,\models,\pow(\NN))$ be an $\amst$, where for any $\Gamma\cup\{m\}\subseteq\NN$, $m\models\Gamma$ iff $\Gamma\neq\{n,n+1\}$ for some $n\in\NN$. Now, for each $n\in\NN$, $\{n,n+1\}$ is not satisfiable. Hence, $\gecq$ holds in $\mstr$.

    Moreover, for each $n\in\NN$, the set $\{n,n+1\}$ is not satisfiable. This implies that $\gecq$ holds in $\mstr$. So, by Theorem \ref{thm:sat_imp}(iii), $\secq$ also holds in $\mstr$.

    However, for any set $\Gamma\subseteq\lang$ with $\lvert\Gamma\rvert\ge3$, there does not exist a $\Delta\supseteq\Gamma$ such that $\Delta$ is not satisfiable. Hence, $\pfecq$ does not hold in $\mstr$. 
    
    In particular, there does not exist a $k\in\NN$ such that $\Gamma\cup\{k\}$ is not satisfiable. Thus, $\specq$ fails in $\mstr$.
\end{exa}
\subsubsection{Characterization theorems}
We now turn to characterization results for the four principles of explosion in this subsection.

\begin{thm}[Characterization of $\secq$ for Normal $\amst$s]\label{thm:char_secq_normal}
    Suppose $\mstr=(\mathbf{M},\models,\pow(\lang))$ is a normal $\amst$. $\secq$ holds in $\mstr$ iff for each $\alpha\in\lang$, there exists $\beta\in\lang\setminus\{\alpha\}$ such that $\lang\setminus\{\beta\}$ is not satisfiable.
\end{thm}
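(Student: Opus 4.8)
The plan is to prove both implications directly, using the alternative formulation of $\secq$ recorded in Remark~\ref{rem:alt_secq} together with the fact that $\Mod$ is order-reversing on normal $\amst$s (Theorem~\ref{thm:Prop_of_Mod}(ii)). Neither direction requires any new construction; the whole argument is a manipulation of proper subsets of $\lang$.

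For the forward direction, assume $\secq$ holds in $\mstr$ and fix an arbitrary $\alpha\in\lang$. By Remark~\ref{rem:alt_secq} there is a set $\Delta\subsetneq\lang$ with $\alpha\in\Delta$ and $\Delta$ not satisfiable. Since $\Delta$ is a proper subset of $\lang$, choose some $\beta\in\lang\setminus\Delta$; as $\alpha\in\Delta$ this forces $\beta\neq\alpha$, i.e.\ $\beta\in\lang\setminus\{\alpha\}$. Now $\Delta\subseteq\lang\setminus\{\beta\}$, so Theorem~\ref{thm:Prop_of_Mod}(ii) gives $\Mod(\lang\setminus\{\beta\})\subseteq\Mod(\Delta)=\emptyset$; hence $\lang\setminus\{\beta\}$ is not satisfiable, which is exactly what the right-hand condition demands.

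For the converse, assume the stated condition and fix $\alpha\in\lang$. Pick $\beta\in\lang\setminus\{\alpha\}$ with $\lang\setminus\{\beta\}$ not satisfiable, and put $\Gamma=\lang\setminus\{\beta\}$. Since $\alpha\neq\beta$ we have $\alpha\in\Gamma$, so $\Gamma\cup\{\alpha\}=\Gamma=\lang\setminus\{\beta\}\subsetneq\lang$ and is not satisfiable. As $\alpha$ was arbitrary, $\secq$ holds in $\mstr$.

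The proof is essentially bookkeeping, so there is no real obstacle; the one point worth flagging is that normality is used in exactly one spot — the forward direction — to pass from the unsatisfiability of the ``small'' set $\Delta$ to that of the larger set $\lang\setminus\{\beta\}$ containing it. This upward propagation of unsatisfiability is precisely the decreasing behaviour of $\Mod$ from Theorem~\ref{thm:Prop_of_Mod}(ii) and would fail for an arbitrary $\amst$; the converse direction, by contrast, does not need normality at all.
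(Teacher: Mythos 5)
Your proof is correct and follows essentially the same route as the paper's: both directions are the same bookkeeping, with normality invoked only in the forward direction to propagate unsatisfiability from the witness set up to $\lang\setminus\{\beta\}$ (the paper does this implicitly where you cite Theorem~\ref{thm:Prop_of_Mod}(ii) explicitly, and works with $\Gamma\cup\{\alpha\}$ directly where you route through Remark~\ref{rem:alt_secq}). No issues.
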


\begin{proof}
    Suppose $\secq$ holds in $\mstr$ and $\alpha\in\lang$. Then, there exists $\Gamma\subseteq\lang$ such that $\Gamma\cup\{\alpha\}\subsetneq\lang$ and $\Gamma\cup\{\alpha\}$ is not satisfiable. Since $\Gamma\cup\{\alpha\}\subsetneq\lang$, there exists $\beta\in\lang\setminus(\Gamma\cup\{\alpha\})$. Then, $\Gamma\cup\{\alpha\}\subseteq\lang\setminus\{\beta\}$. Now, as $\Gamma\cup\{\alpha\}$ is not satisfiable and $\mstr$ is normal, $\lang\setminus\{\beta\}$ is not satisfiable.

    Conversely, suppose for each $\alpha\in\lang$, there exists $\beta\in\lang\setminus\{\alpha\}$ such that $\lang\setminus\{\beta\}$ is not satisfiable. Then, $\alpha\in\lang\setminus\{\beta\}$ and hence, $(\lang\setminus\{\beta\})\cup\{\alpha\}=\lang\setminus\{\beta\}\subsetneq\lang$. Moreover, $\lang\setminus\{\beta\}$ is not satisfiable. Thus, $\secq$ holds in $\mstr$.
 \end{proof}

\begin{cor}[Characterization of $\gecq$ for Normal $\amst$s]\label{cor:char_gecq_normal}
    Suppose $\mstr=(\mathbf{M},\models,\pow(\lang))$ is a normal $\amst$. $\gecq$ holds in $\mstr$ iff for each $\alpha\in\lang$, there exists $\beta\in\lang$ such that $\lang\setminus\{\beta\}$ is not satisfiable.
\end{cor}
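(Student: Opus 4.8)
The plan is to prove the biconditional direction by direction, leaning on Theorem~\ref{thm:char_secq_normal} and on the normality fact that a superset of an unsatisfiable set is unsatisfiable (the contrapositive of Theorem~\ref{thm:Prop_of_Mod}(ii): if $\Gamma\subseteq\Sigma$ and $\Mod(\Gamma)=\emptyset$ then $\Mod(\Sigma)=\emptyset$), together with the infinitude of $\lang$.

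For the ``only if'' direction the cleanest route is to chain Theorem~\ref{thm:sat_imp}(iii) with Theorem~\ref{thm:char_secq_normal}: $\gecq$ implies $\secq$, which, $\mstr$ being normal, yields for each $\alpha\in\lang$ a $\beta\in\lang\setminus\{\alpha\}$ with $\lang\setminus\{\beta\}$ not satisfiable, and hence a fortiori a $\beta\in\lang$ with the same property. One can equally argue directly: given $\alpha$, pick $\beta$ with $\{\alpha,\beta\}$ not satisfiable by $\gecq$, choose $\delta\in\lang\setminus\{\alpha,\beta\}$ (possible since $\lang$ is infinite), and note that $\{\alpha,\beta\}\subseteq\lang\setminus\{\delta\}$, so normality makes $\lang\setminus\{\delta\}$ unsatisfiable. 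I expect this half to be routine.

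The ``if'' direction is the step I expect to be the main obstacle. From the hypothesis one immediately recovers $\secq$ (by the converse of Theorem~\ref{thm:char_secq_normal}, choosing the witnessing $\beta$ different from $\alpha$ whenever possible), but upgrading this to $\gecq$ requires producing, for each prescribed $\alpha$, a \emph{two-element} unsatisfiable set containing $\alpha$, and here the ``superset'' move runs the wrong way. Unsatisfiability of the large set $\lang\setminus\{\beta\}$ says that $\bigcap_{\gamma\ne\beta}\Mod(\{\gamma\})=\emptyset$, which under normality alone need not force $\Mod(\{\gamma_1\})\cap\Mod(\{\gamma_2\})=\emptyset$ for any pair $\gamma_1,\gamma_2$: a family of subsets of $\mathbf{M}$ can have empty total intersection while every finite (in particular every two-element) subfamily has nonempty intersection. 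So as part of the plan I would expect to have to pin down an extra ``binary'' condition on $\mstr$, beyond normality, under which a co-singleton unsatisfiable set forces an unsatisfiable pair; absent such a strengthening the converse seems to fail, which would square with the fact that $\secq$ does not entail $\gecq$, so that the co-singleton criterion of Theorem~\ref{thm:char_secq_normal} should not be expected to characterize $\gecq$. If instead ``$\lang\setminus\{\beta\}$'' in the statement is a slip for ``$\{\alpha,\beta\}$'', the corollary reduces to the definition of $\gecq$, and the only thing to record is its formal parallel with Theorem~\ref{thm:char_secq_normal}.
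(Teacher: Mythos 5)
Your forward direction is correct and is essentially the paper's: the paper chains Theorem~\ref{thm:sat_imp}(iii) with Theorem~\ref{thm:char_secq_normal}, and your direct variant (pick $\delta\in\lang\setminus\{\alpha,\beta\}$ and pass from the unsatisfiable pair $\{\alpha,\beta\}$ to its superset $\lang\setminus\{\delta\}$, using normality) is the same monotonicity argument unwound.

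Your suspicion about the ``if'' direction is justified: the converse as stated is false, and the paper's own proof of it commits exactly the fallacy you isolated. The paper writes ``suppose for each $\alpha\in\lang$ there exists $\beta\in\lang$ such that $\lang\setminus\{\beta\}$ is [not] satisfiable; since $\mstr$ is normal, this implies that $\{\alpha,\beta\}$ is not satisfiable.'' Normality only yields that unsatisfiability passes \emph{up} to supersets ($\Mod(\Sigma)\subseteq\Mod(\Gamma)$ for $\Gamma\subseteq\Sigma$); $\{\alpha,\beta\}$ is not even a subset of $\lang\setminus\{\beta\}$, and there is no route from $\bigcap_{\gamma\neq\beta}\Mod(\{\gamma\})=\emptyset$ to $\Mod(\{\alpha\})\cap\Mod(\{\beta\})=\emptyset$. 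A concrete normal counterexample realizing your ``empty total intersection, nonempty pairwise intersections'' scenario: let $\lang=\mathbf{M}=\NN$ and declare $m\models\Gamma$ iff $m>\gamma$ for every $\gamma\in\Gamma$. This $\amst$ is normal by construction, $\Mod(\{n\})=\{n+1,n+2,\dots\}$, so every finite set (in particular every pair) is satisfiable and $\gecq$ fails; yet $\Mod(\lang\setminus\{\beta\})=\bigcap_{n\neq\beta}\{n+1,n+2,\dots\}=\emptyset$ for every $\beta$, so the right-hand side of the corollary holds. Hence the co-singleton criterion characterizes $\secq$ for normal $\amst$s (Theorem~\ref{thm:char_secq_normal}) but cannot characterize $\gecq$, and this example also shows that $\secq$ does not imply $\gecq$ even under normality. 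The statement needs to be repaired (e.g.\ by reverting to the defining condition on pairs, or by adding a hypothesis forcing emptiness of an infinite intersection of the $\Mod(\{\gamma\})$ to be witnessed by a pair); your analysis is not missing an idea --- the corollary is.
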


\begin{proof}
    Suppose $\gecq$ holds in $\mstr$. Then, by Theorem \ref{thm:sat_imp}(iii), $\secq$ holds in $\mstr$. So, by Theorem \ref{thm:char_secq_normal}, for each $\alpha\in\lang$, there exists $\beta\in\lang\setminus\{\alpha\}\subseteq\lang$ such that $\lang\setminus\{\beta\}$ is not satisfiable.

    Conversely, suppose for each $\alpha\in\lang$, there exists $\beta\in\lang$ such that $\lang\setminus\{\beta\}$ is satisfiable. Since, $\mstr$ is normal, this implies that $\{\alpha,\beta\}$ is not satisfiable. Hence, $\gecq$ holds in $\mstr$.
\end{proof}

To obtain a characterization for $\secq$ (without the assumption of normality), we introduce the concept of \emph{relativized satisfiability} below. Besides this, the notions of filters and principal ultrafilters are also required. These are also explained below.

\begin{dfn}[Relativized Satisfiability]\label{dfn:rel_sat}
    Suppose $\mstr=(\mathbf{M},\models,\pow(\lang))$ is an $\amst$ and  $\mathcal{K}\subseteq\pow(\lang)$. Then, a set $\Sigma\subseteq\lang$ is said to be \emph{satisfiable relative to $\mathcal{K}$} or \emph{$\mathcal{K}$-satisfiable} if, for every $\Delta\subseteq\Sigma$ such that $\Delta\in\mathcal{K}$, $\Delta$ is satisfiable.
\end{dfn}

\begin{rem}
    Suppose $\mstr=(\mathbf{M},\models,\pow(\lang))$ is an $\amst$. The following statements are easy to verify.
    \begin{enumerate}[label=(\roman*)]
        \item Any satisfiable $\Gamma\subseteq\lang$ is satisfiable relative to $\{\Gamma\}$.
        \item Any finitely satisfiable $\Gamma$ is satisfiable relative to $\mathcal{K}=\{\Delta\subseteq\lang\mid\,\Delta\hbox{ is finite}\}$.
        \item If $\mstr$ is normal, then any satisfiable $\Gamma\subseteq\lang$ is satisfiable relative to $\pow(\lang)$.
    \end{enumerate}
\end{rem}

\begin{dfn}[Filters, Ultrafilters, Principal Ultrafilters]
Given a set $I$, a \emph{filter on $I$} is a set $\mathcal{F}\subseteq\pow(I)$ satisfying the following properties. 
\begin{enumerate}[label=(\alph*)]
    \item $X\in \mathcal{F}$ and $Y\in \mathcal{F}$ implies that $X\cap Y\in \mathcal{F}$.
    \item $X\subseteq Y$ and $X\in \mathcal{F}$ implies that $Y\in \mathcal{F}$.
\end{enumerate}
A filter $\mathcal{F}$ on $I$ is said to be a \emph{proper filter} if $\emptyset\notin \mathcal{F}$. 

An \emph{ultrafilter on $I$} is a maximal proper filter on $I$. In other words, an ultrafilter on $I$ is a proper filter $\mathcal{U}$ on $I$ such that for every filter $\mathcal{F}\supsetneq \mathcal{U}$, $\emptyset\in \mathcal{F}$.

Given an $s\in I$, the set $\mathcal{U}_s^I=\{X\subseteq I\mid\,s\in X\}$ is an ultrafilter on $I$ and is called the \emph{principal ultrafilter generated by $s$}. An ultrafilter $\mathcal{U}$ is called a \emph{principal filter} if $\mathcal{U}=\mathcal{U}_s^I$ for some $s\in I$.
\end{dfn}

\begin{thm}[Characterization of $\secq$]\label{thm:char_secq}
    Suppose $\mstr=(\mathbf{M},\models,\pow(\lang))$ is an $\amst$. Then, the following statements are equivalent.
    \begin{enumerate}[label=(\roman*)]
        \item $\secq$ holds in $\mstr$.
        \item For all $\alpha\in\lang$, there exists $\beta\in\lang\setminus\{\alpha\}$, such that $\lang\setminus\{\beta\}$ is not satisfiable relative to $\mathcal{U}_\alpha^{\lang\setminus\{\beta\}}$.
        \item For all $\alpha\in \mathscr{L}$, if $\mathscr{L}\setminus\{\alpha\}$ is satisfiable, then for all $\beta\in \mathscr{L}\setminus\{\alpha\}$, there exists $\Delta\subsetneq \mathscr{L}$ such that $\alpha\in \Delta$ and $\Delta$ is not satisfiable.
    \end{enumerate} 
\end{thm}

\begin{proof}
    \underline{(i) $\implies$ (ii)}: Suppose $\secq$ holds in $\mstr$ and $\alpha\in\lang$. Then, by Remark \ref{rem:alt_secq}, there exists $\Sigma\subsetneq\lang$ such that $\alpha\in\Sigma$ and $\Sigma$ is not satisfiable. Since $\alpha\in\Sigma\subsetneq\lang$, there exists $\beta\in\lang\setminus\{\alpha\}$ such that $\Sigma\subseteq\lang\setminus\{\beta\}$. Thus, $\Sigma\in\mathcal{U}_\alpha^{\lang\setminus\{\beta\}}$. Since $\Sigma$ is not satisfiable, this implies that $\lang\setminus\{\beta\}$ is not satisfiable relative to $\mathcal{U}_\alpha^{\lang\setminus\{\beta\}}$.

    \underline{(ii) $\implies$ (i)}: Suppose statement (ii) holds, and let $\alpha\in\lang$. Then, there exists $\beta\in\lang\setminus\{\alpha\}$ such that $\lang\setminus\{\beta\}$ is not satisfiable relative to $\mathcal{U}_\alpha^{\lang\setminus\{\beta\}}$. So, there exists $\Sigma\subseteq\lang\setminus\{\beta\}$ such that $\Sigma\in\mathcal{U}_\alpha^{\lang\setminus\{\beta\}}$ and $\Sigma$ is not satisfiable. Since $\Sigma\subseteq\lang\setminus\{\beta\}$, $\Sigma\subsetneq\lang$, and as $\Sigma\in\mathcal{U}_\alpha^{\lang\setminus\{\beta\}}$, $\alpha\in\Sigma$. Thus, for any $\alpha\in\lang$, there exists $\Sigma\subsetneq\lang$ such that $\alpha\in\Sigma$ and $\Sigma$ is not satisfiable. Hence, by Remark \ref{rem:alt_secq}, $\secq$ holds in $\mstr$.

     \underline{(i) $\implies$ (iii)}: Suppose $\secq$ holds in $\mstr$. Then, the statement (iii) immediately follows from the definition of $\secq$.

     \underline{(iii) $\implies$ (i)}: Suppose statement (iii) holds and let $\alpha\in\lang$. Suppose further that there exist $\beta,\gamma\in\lang$, with $\beta\ne\gamma$, such that $\lang\setminus\{\beta\}$ and $\lang\setminus\{\gamma\}$ are not satisfiable. If $\alpha\ne\beta$, then $\alpha\in \lang\setminus\{\beta\}$; otherwise, $\alpha\in \lang\setminus\{\gamma\}$. In either case, $\alpha$ is contained in a proper subset of $\lang$ that is not satisfiable. Thus, $\secq$ holds.
        
    On the other hand, if it is not the case that there are two distinct $\beta,\gamma\in\lang$ such that $\lang\setminus\{\beta\}$ and $\lang\setminus\{\gamma\}$ are not satisfiable, then there is at most one unsatisfiable set of the form $\lang\setminus\{\beta\}$. Let $\gamma\in\lang$ such that $\gamma\notin \{\alpha,\beta\}$ (such a $\gamma$ exists since $\lang$ is infinite). Then, as $\gamma\ne\beta$, $\lang\setminus\{\gamma\}$ is satisfiable. So, since $\alpha\in \lang\setminus\{\gamma\}$, by statement (iii), there exists $\Delta\subsetneq \lang$ such that $\alpha\in \Delta$ and $\Delta$ is not satisfiable. Thus, $\secq$ holds in this case as well. 
\end{proof}

\begin{thm}[Characterization of $\pfecq$]\label{thm:char_pfecq}
    Suppose $\mstr=(\mathbf{M},\models,\pow(\lang))$ is an $\amst$. $\pfecq$ holds in $\mstr$ iff, for all $\alpha\in\lang$, $\lang\setminus\{\alpha\}$ is not satisfiable.
\end{thm}

\begin{proof}
    Suppose $\pfecq$ holds in $\mstr$ and $\alpha\in\lang$. Then, as $\lang\setminus\{\alpha\}\subsetneq\lang$, there exists $\Delta\subsetneq\lang$ such that $\lang\setminus\{\alpha\}\subseteq\Delta$ and $\Delta$ is not satisfiable. Clearly, $\alpha\notin\Delta$, since otherwise, $\Delta=\lang$, a contradiction. Thus, $\Delta=\lang\setminus\{\alpha\}$. Hence, $\lang\setminus\{\alpha\}$ is not satisfiable.

    Conversely, suppose $\lang\setminus\{\alpha\}$ is not satisfiable for all $\alpha\in\lang$. Moreover, suppose $\pfecq$ fails in $\mstr$. Then, there exists $\Gamma\subsetneq\lang$ such that, for all $\Delta\subsetneq\lang$, if $\Gamma\subseteq\Delta$, then $\Delta$ is satisfiable. Now, as $\Gamma\subsetneq\lang$, there exists $\alpha\in\lang\setminus\Gamma$. Then, $\Gamma\subseteq\lang\setminus\{\alpha\}\subsetneq\lang$. So, $\lang\setminus\{\alpha\}$ is satisfiable, which contradicts our assumption. Hence, $\pfecq$ holds in $\mstr$. 
\end{proof}

Finally, the following series of results lead to a characterization of $\specq$.

\begin{thm}\label{thm:specq<=>pfecq}
    Suppose $\mstr=(\mathbf{M},\models,\pow(\lang))$ is an $\amst$. If $\specq$ holds in $\mstr$, then there exists $\varphi\in\lang$ such that $\{\varphi\}$ is not satisfiable, and $\pfecq$ holds in $\mstr$. Moreover, if for all $\Gamma\subsetneq\lang$, $\Gamma$ is finitely satisfiable whenever it is satisfiable, then the converse also holds.
\end{thm}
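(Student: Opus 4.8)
For the forward implication almost nothing is needed: if $\specq$ holds in $\mstr$, then $\pfecq$ holds by Theorem~\ref{thm:sat_imp}(ii), and applying $\specq$ to $\Gamma=\emptyset$ — a proper subset of $\lang$ since $\lang$ is infinite — produces some $\alpha\in\lang$ with $\{\alpha\}=\emptyset\cup\{\alpha\}\subsetneq\lang$ not satisfiable, so one may take $\varphi:=\alpha$.

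For the converse, fix $\varphi\in\lang$ with $\{\varphi\}$ not satisfiable, assume $\pfecq$ holds, and assume additionally that every satisfiable proper subset of $\lang$ is finitely satisfiable; by contraposition this says that any $\Gamma\subsetneq\lang$ having a finite unsatisfiable subset is itself not satisfiable. Given an arbitrary $\Gamma\subsetneq\lang$, the goal is a single $\alpha\in\lang$ with $\Gamma\cup\{\alpha\}\subsetneq\lang$ and $\Gamma\cup\{\alpha\}$ not satisfiable, and I would split on whether $\Gamma=\lang\setminus\{\varphi\}$. If $\Gamma\neq\lang\setminus\{\varphi\}$, then since $\Gamma\subsetneq\lang$ one checks at once that $\Gamma\cup\{\varphi\}\subsetneq\lang$; as this proper subset contains the finite unsatisfiable set $\{\varphi\}$, the additional hypothesis forces $\Gamma\cup\{\varphi\}$ to be unsatisfiable, so $\alpha:=\varphi$ works. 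If instead $\Gamma=\lang\setminus\{\varphi\}$, then $\Gamma\cup\{\varphi\}=\lang$ and the previous move is useless; here I would invoke Theorem~\ref{thm:char_pfecq}, which gives from $\pfecq$ that $\Gamma=\lang\setminus\{\varphi\}$ is not satisfiable, and then pick any $\alpha\in\Gamma$ (possible as $\lang$ is infinite), so that $\Gamma\cup\{\alpha\}=\Gamma\subsetneq\lang$ is not satisfiable.

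The only real subtlety — and the reason all three hypotheses of the converse are genuinely used — is that in a general, non-normal $\amst$ unsatisfiability need not be inherited by supersets, so one cannot pass directly from ``$\{\varphi\}$ is not satisfiable'' to ``$\Gamma\cup\{\varphi\}$ is not satisfiable.'' The hypothesis that satisfiable proper subsets are finitely satisfiable is exactly what licenses that passage in the main case; and because that hypothesis is silent when the relevant superset is all of $\lang$, the boundary case $\Gamma=\lang\setminus\{\varphi\}$ must be handled separately, which is where $\pfecq$ (via Theorem~\ref{thm:char_pfecq}) enters. Everything else is routine manipulation of strict inclusions together with the infiniteness of $\lang$.
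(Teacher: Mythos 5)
Your proof is correct, and it uses the same two essential ingredients as the paper's — Theorem~\ref{thm:char_pfecq} to handle the co-singleton obstruction, and the ``satisfiable $\Rightarrow$ finitely satisfiable'' hypothesis to push the unsatisfiability of $\{\varphi\}$ up to $\Gamma\cup\{\varphi\}$ — but it organizes the converse differently. The paper argues by contradiction: it assumes $\specq$ fails at some $\Delta$, first shows (by cases on $\Delta=\emptyset$ or not) that $\Delta$ must be satisfiable, uses $\pfecq$ to rule out $\Delta=\lang\setminus\{\alpha\}$ so that every $\Delta\cup\{\alpha\}$ is a proper subset, and then derives that every singleton $\{\alpha\}$ is satisfiable, contradicting the existence of $\varphi$. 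You instead give a direct construction of the witness: $\alpha=\varphi$ works for every $\Gamma\neq\lang\setminus\{\varphi\}$ (after the small check, which you should spell out, that $\Gamma\cup\{\varphi\}\subsetneq\lang$ splits on whether $\varphi\in\Gamma$), and the single boundary case $\Gamma=\lang\setminus\{\varphi\}$ is dispatched by Theorem~\ref{thm:char_pfecq}. Your version is somewhat more transparent: it exhibits the witness explicitly and isolates precisely where $\pfecq$ is indispensable (only in the boundary case), whereas the paper's route passes through the intermediate claim that the hypothetical counterexample $\Delta$ is satisfiable. Both arguments are sound and of essentially the same length; the difference is one of presentation (direct versus reductio) rather than of substance.
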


\begin{proof}
    Suppose $\specq$ holds in $\mstr$. Then, by Theorem \ref{thm:sat_imp}(ii), $\pfecq$ holds in $\mstr$. Now, as $\emptyset\subsetneq\lang$, by $\specq$, there exists $\varphi\in\lang$ such that $\emptyset\cup\{\varphi\}=\{\varphi\}\subsetneq\lang$ and $\{\varphi\}$ is not satisfiable.

    Conversely, suppose there exists $\varphi\in\lang$ such that $\{\varphi\}$ is not satisfiable and $\pfecq$ holds in $\mstr$. Moreover, we assume that for all $\Gamma\subsetneq\lang$, if $\Gamma$ is satisfiable, then it is finitely satisfiable. Suppose, if possible, $\specq$ does not hold in $\mstr$. So, there exists $\Delta\subsetneq\lang$ such that, for all $\alpha\in\lang$, if $\Delta\cup\{\alpha\}\subsetneq\lang$, then $\Delta\cup\{\alpha\}$ is satisfiable. We claim that this implies $\Delta$ is satisfiable.

    \textbf{Case 1:} $\Delta=\emptyset$

    In this case, since for any $\alpha\in\lang$, $\Delta\cup\{\alpha\}=\{\alpha\}\subsetneq\lang$, and hence, is satisfiable. So, by the assumed condition, $\{\alpha\}$ is finitely satisfiable. Thus, $\Delta=\emptyset$ being a finite subset of $\{\alpha\}$ is satisfiable.

    \textbf{Case 2:} $\Delta\neq\emptyset$

    Let $\alpha\in\Delta$. Then, $\Delta\cup\{\alpha\}=\Delta\subsetneq\lang$ is satisfiable. 

    Hence, in all cases, $\Delta$ is satisfiable. Now, since $\pfecq$ holds in $\mstr$, by Theorem \ref{thm:char_pfecq}, $\lang\setminus\{\alpha\}$ is not satisfiable for all $\alpha\in\lang$. Thus, $\Delta\neq\lang\setminus\{\alpha\}$ for all $\alpha\in\lang$. So, for any $\alpha\in\lang$, $\Delta\cup\{\alpha\}\subsetneq\lang$, and thus, $\Delta\cup\{\alpha\}$ is satisfiable, by the failure of $\specq$. Then, since by assumption, satisfiability implies finite satisfiability, $\{\alpha\}\subseteq\Delta\cup\{\alpha\}$ is satisfiable. This contradicts the assumption that there is no $\varphi\in\lang$ such that $\{\varphi\}$ is satisfiable. Hence, $\specq$ must hold in $\mstr$.
\end{proof}

\begin{cor}[Characterization of $\specq$ for Normal $\amst$s]\label{cor:char_specq_normal}
    Suppose $\mstr=(\mathbf{M},\models,\pow(\lang))$ is a normal $\amst$. $\specq$ holds in $\mstr$, iff there exists $\varphi\in\lang$ such that $\{\varphi\}$ is not satisfiable and $\pfecq$ holds in $\mstr$.
\end{cor}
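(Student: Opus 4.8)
The plan is to derive this corollary directly from Theorem~\ref{thm:specq<=>pfecq}, the point being that normality supplies exactly the side condition appearing in the converse half of that theorem. For the forward implication nothing new is needed: the first assertion of Theorem~\ref{thm:specq<=>pfecq} already says that whenever $\specq$ holds in an $\amst$ $\mstr$ (normality is irrelevant here), there is $\varphi\in\lang$ with $\{\varphi\}$ not satisfiable and $\pfecq$ holds in $\mstr$. So I would simply invoke that assertion.

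For the converse, I would assume that there is $\varphi\in\lang$ with $\{\varphi\}$ not satisfiable and that $\pfecq$ holds in $\mstr$, and then appeal to the second assertion of Theorem~\ref{thm:specq<=>pfecq}. That assertion gives $\specq$ provided the following side condition holds: for every $\Gamma\subsetneq\lang$, if $\Gamma$ is satisfiable then $\Gamma$ is finitely satisfiable. So the only work is to check this side condition under the hypothesis of normality. This is immediate: if $m\models\Gamma$, then by Definition~\ref{dfn:normamst} we have $m\models\{\alpha\}$ for every $\alpha\in\Gamma$, whence, again by Definition~\ref{dfn:normamst} (equivalently, by Theorem~\ref{thm:Prop_of_Mod}(ii), i.e.\ downward monotonicity of $\Mod$), $m\models\Gamma'$ for every finite $\Gamma'\subseteq\Gamma$; thus $\Gamma$ is finitely satisfiable. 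Having verified the side condition, Theorem~\ref{thm:specq<=>pfecq} yields that $\specq$ holds in $\mstr$, completing the equivalence.

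Since both directions reduce to Theorem~\ref{thm:specq<=>pfecq} together with the elementary fact that in a normal $\amst$ satisfiability entails finite satisfiability, I do not anticipate any genuine obstacle here; the corollary is essentially a specialization of the earlier theorem. The one point requiring a little care is that the side condition in Theorem~\ref{thm:specq<=>pfecq} is phrased for proper subsets $\Gamma\subsetneq\lang$, but the normality argument above applies uniformly to all subsets of $\lang$, so in particular to the proper ones, and no mismatch arises.
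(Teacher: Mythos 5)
Your proof is correct and follows the same route as the paper: both directions reduce to Theorem~\ref{thm:specq<=>pfecq}, with normality used only to verify that satisfiability implies finite satisfiability (the side condition in that theorem's converse). Your explicit unpacking of why normality gives this hereditary property is a minor elaboration of what the paper states in one line; there is no substantive difference.
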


\begin{proof}
    Since $\mstr$ is normal, for any $\Gamma\subseteq\lang$, if $\Gamma$ is satisfiable, then every subset of $\Gamma$, and hence, every finite subset of $\Gamma$ is satisfiable. Hence, the result follows from the above theorem.
\end{proof}

\begin{thm}[Characterization of $\specq$]\label{thm:char_specq}
    Suppose $\mstr=(\mathbf{M},\models,\pow(\lang))$ is an $\amst$. Then the following statements are equivalent.
    \begin{enumerate}[label=(\roman*)]
        \item $\specq$ holds in $\mstr$.
        \item There exists $\varphi\in\lang$ such that $\{\varphi\}$ is not satisfiable, and for any satisfiable $\Gamma\subsetneq\lang$, there exists $\gamma\in\lang$ such that $\Gamma\cup\{\gamma\}\subsetneq\lang$ and $\Gamma\cup\{\gamma\}$ is not satisfiable.
        \item There exists $\varphi\in\lang$ such that $\{\varphi\}$ is not satisfiable, $\pfecq$ holds in $\mstr$, and for all $\Gamma\subsetneq\lang$, for all $\alpha\in\lang$, there exists $\beta\in\lang$ such that $\Gamma\cup\{\beta\}\vdash\alpha$, where $(\lang,\vdash)=\mathcal{S}$ is the logical structure induced by $\mstr$.
    \end{enumerate}
\end{thm}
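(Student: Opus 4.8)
The plan is to prove the cycle of implications $(\mathrm{i})\Rightarrow(\mathrm{iii})\Rightarrow(\mathrm{ii})\Rightarrow(\mathrm{i})$, leaning on Theorems \ref{thm:sat_imp}, \ref{thm:explosive<=>unsat}, and \ref{thm:char_pfecq} together with the $\Mod$-inclusion description of $\vdash$ from Definition \ref{dfn:mlogstr}. Throughout, the point $\varphi$ with $\{\varphi\}$ not satisfiable is the common ``seed'': it appears in both $(\mathrm{ii})$ and $(\mathrm{iii})$, and $\specq$ produces it by applying the principle to the proper subset $\emptyset\subsetneq\lang$.

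For $(\mathrm{i})\Rightarrow(\mathrm{iii})$: assuming $\specq$, the witness $\varphi$ comes from $\emptyset\subsetneq\lang$ as just noted, and $\pfecq$ holds by Theorem \ref{thm:sat_imp}(ii). For the last clause, fix $\Gamma\subsetneq\lang$ and $\alpha\in\lang$; by $\specq$ there is $\gamma\in\lang$ with $\Gamma\cup\{\gamma\}\subsetneq\lang$ and not satisfiable, so by Theorem \ref{thm:explosive<=>unsat} the set $\Gamma\cup\{\gamma\}$ is $\mathcal{S}$-explosive, whence $\Gamma\cup\{\gamma\}\vdash\alpha$; take $\beta=\gamma$.

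For $(\mathrm{iii})\Rightarrow(\mathrm{ii})$: since the $\varphi$-clause is shared, it suffices to produce, for each satisfiable $\Gamma\subsetneq\lang$, some $\gamma$ with $\Gamma\cup\{\gamma\}\subsetneq\lang$ not satisfiable. Apply the third clause of $(\mathrm{iii})$ with this $\Gamma$ and $\alpha=\varphi$ to get $\beta$ with $\Gamma\cup\{\beta\}\vdash\varphi$; as $\{\varphi\}$ is not satisfiable, $\Mod(\{\varphi\})=\emptyset$, hence $\Mod(\Gamma\cup\{\beta\})\subseteq\Mod(\{\varphi\})=\emptyset$ and $\Gamma\cup\{\beta\}$ is not satisfiable. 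It remains to check $\Gamma\cup\{\beta\}\subsetneq\lang$: since $\Gamma$ is satisfiable but $\Gamma\cup\{\beta\}$ is not, we have $\beta\notin\Gamma$; and since $\pfecq$ holds, Theorem \ref{thm:char_pfecq} gives that $\lang\setminus\{\beta\}$ is not satisfiable, so $\Gamma\neq\lang\setminus\{\beta\}$, forcing $\lvert\lang\setminus\Gamma\rvert\ge 2$ and $\Gamma\cup\{\beta\}\subsetneq\lang$. Set $\gamma=\beta$. This properness bookkeeping is the one genuinely delicate point, and where I expect the main obstacle to lie: one must rule out $\Gamma\cup\{\beta\}=\lang$, and this is exactly what the at-first-glance redundant $\pfecq$ hypothesis in $(\mathrm{iii})$ is there to guarantee.

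For $(\mathrm{ii})\Rightarrow(\mathrm{i})$: let $\Gamma\subsetneq\lang$. If $\Gamma$ is satisfiable, $(\mathrm{ii})$ supplies the desired $\gamma$ directly. If $\Gamma$ is not satisfiable and $\Gamma\neq\emptyset$, any $\alpha\in\Gamma$ works, since $\Gamma\cup\{\alpha\}=\Gamma\subsetneq\lang$ is not satisfiable; and if $\Gamma=\emptyset$ is not satisfiable, use the witness $\varphi$, noting $\{\varphi\}=\emptyset\cup\{\varphi\}\subsetneq\lang$ because $\lang$ is infinite. Hence $\specq$ holds. All remaining steps are routine unravelling of the definitions of $\specq$, $\pfecq$, and $\vdash$.
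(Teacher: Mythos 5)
Your proof is correct. The ingredients are the same as the paper's (Theorem \ref{thm:sat_imp}(ii), Theorem \ref{thm:char_pfecq}, and unsatisfiability implying explosiveness), but the decomposition differs: you close a single cycle $(\mathrm{i})\Rightarrow(\mathrm{iii})\Rightarrow(\mathrm{ii})\Rightarrow(\mathrm{i})$, whereas the paper proves $(\mathrm{i})\Leftrightarrow(\mathrm{ii})$ and $(\mathrm{i})\Leftrightarrow(\mathrm{iii})$ separately, establishing $(\mathrm{iii})\Rightarrow(\mathrm{i})$ by contradiction (assume $\specq$ fails for some $\Gamma$, show $\Gamma$ must be satisfiable and every $\Gamma\cup\{\gamma\}$ proper, then derive a contradiction from $\Gamma\cup\{\beta\}\vdash\varphi$). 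Your direct $(\mathrm{iii})\Rightarrow(\mathrm{ii})$ step is the genuinely new piece, and its properness bookkeeping is sound: $\beta\notin\Gamma$ because $\Gamma$ is satisfiable while $\Gamma\cup\{\beta\}$ is not, and $\Gamma\neq\lang\setminus\{\beta\}$ because $\pfecq$ together with Theorem \ref{thm:char_pfecq} makes $\lang\setminus\{\beta\}$ unsatisfiable; you correctly identify this as exactly the job the $\pfecq$ clause of $(\mathrm{iii})$ performs (the paper uses it for the same purpose in its reductio). Two small things your route buys: it replaces a proof by contradiction with a direct construction, and it cites Theorem \ref{thm:explosive<=>unsat} (which needs no normality) where the paper appeals to Theorem \ref{thm:unsat=>triv} (stated for normal $\amst$s), so your citation is actually the more precise one for this normality-free statement.
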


\begin{proof}
    \underline{(i) $\implies$ (ii)}: Suppose $\specq$ holds in $\mstr$. Then, by Theorem \ref{thm:specq<=>pfecq}, there exists $\varphi\in\lang$ such that $\{\varphi\}$ is not satisfiable. The rest of the statement (ii) follows immediately from the definition of $\specq$.

    \underline{(ii) $\implies$ (i)}: Suppose the statement (ii) holds. So, for any satisfiable $\Gamma\subsetneq\lang$, there exists $\gamma\in\lang$ such that $\Gamma\cup\{\gamma\}\subsetneq\lang$ and $\Gamma\cup\{\gamma\}$ is not satisfiable. Thus, to establish that $\specq$ holds in $\mstr$, it suffices to show that the same holds for any $\Gamma\subsetneq\lang$ that is not satisfiable, i.e., there exists $\gamma\in\lang$ such that $\Gamma\cup\{\gamma\}\subsetneq\lang$ and $\Gamma\cup\{\gamma\}$ is not satisfiable. Let $\Gamma\subsetneq\lang$ that is not satisfiable.

    \textbf{Case 1:} $\Gamma=\emptyset$

    By assumption, there exists $\varphi\in\lang$ such that $\{\varphi\}$ is not satisfiable. Now, $\Gamma\cup\{\varphi\}=\{\varphi\}\subsetneq\lang$, as $\lang$ is infinite, and $\Gamma\cup\{\varphi\}$ is not satisfiable.

    \textbf{Case 2:} $\Gamma\neq\emptyset$

    Let $\gamma\in\Gamma$. Then, $\Gamma\cup\{\gamma\}=\Gamma\subsetneq\lang$ and $\Gamma\cup\{\gamma\}$ is not satisfiable.

    Thus, for any $\Gamma\subsetneq\lang$, there exists $\gamma\in\lang$ such that $\Gamma\cup\{\gamma\}\subsetneq\lang$ and $\Gamma\cup\{\gamma\}$ is not satisfiable, i.e., $\specq$ holds in $\mstr$.

    \underline{(i) $\implies$ (iii)}: Suppose $\specq$ holds in $\mstr$. Then, by Theorem \ref{thm:specq<=>pfecq}, there exists $\varphi\in\lang$ such that $\{\varphi\}$ is not satisfiable and $\pfecq$ holds in $\mstr$. To show the remaining portion of the statement (iii), let $\Gamma\subsetneq\lang$ and $\alpha\in\lang$. Now, since $\specq$ holds in $\mstr$, there exists $\beta\in\lang$ such that $\Gamma\cup\{\beta\}\subsetneq\lang$ and $\Gamma\cup\{\beta\}$ is not satisfiable. Then, by Theorem \ref{thm:unsat=>triv}, $\Gamma\cup\{\beta\}$ is $\mathcal{S}$-explosive. Hence, $\Gamma\cup\{\beta\}\vdash\alpha$.

    \underline{(iii) $\implies$ (i)}: Suppose the statement (iii) holds but $\specq$ does not hold. Then, there exists $\Gamma\subsetneq\lang$ such that for all $\alpha\in\lang$, if $\Gamma\cup\{\alpha\}\subsetneq\lang$, then $\Gamma\cup\{\alpha\}$ is satisfiable. So, for all $\gamma\in\Gamma$, $\Gamma\cup\{\gamma\}=\Gamma\subsetneq\lang$, and hence, $\Gamma$ is satisfiable. Now, as $\pfecq$ holds in $\mstr$, by Theorem \ref{thm:char_pfecq}, $\lang\setminus\{\alpha\}$ is not satisfiable for all $\alpha\in\lang$. Thus, $\Gamma\neq\lang\setminus\{\alpha\}$ for all $\alpha\in\lang$. In other words, $\Gamma\cup\{\gamma\}\subsetneq\lang$ for all $\gamma\in\lang$.

    By assumption, there exists $\varphi\in\lang$ such that $\{\varphi\}$ is not satisfiable. Also, by our assumption, there exists $\beta\in\lang$ such that $\Gamma\cup\{\beta\}\vdash\varphi$. This implies that $\Gamma\cup\{\beta\}$ is not satisfiable since $\{\varphi\}$ is not satisfiable. Moreover, $\Gamma\cup\{\beta\}\subsetneq\lang$. This contradicts our assumption about $\Gamma$. Hence, $\specq$ holds in $\mstr$.
\end{proof}

\subsection{Semantic explosion principles - the \texorpdfstring{$\mathsf{finsat}$}{finsat} variants\label{subsec:finsat}}
\subsubsection{Definitions and interconnections}
We now obtain the following variants of explosion principles discussed in the previous subsection, in Definition \ref{dfn:sat-explosion}, by changing satisfiability to finite satisfiability.

\begin{dfn}[Semantic Explosion Principles: $\mathsf{finsat}$ variants]\label{dfn:finsat-explosion}
    Suppose $\mstr=(\mathbf{M},\models,\pow(\lang))$ is an amst.
    \begin{enumerate}[label=(\roman*)]
        \item $\fgecq$ holds in $\mstr$ if, for all $\alpha\in\lang$, there exists $\beta\in\lang$ such that $\{\alpha,\beta\}$ is not finitely satisfiable.
        \item $\fsecq$ holds in $\mstr$ if, for all $\alpha\in\lang$, there exists $\Gamma\subseteq\lang$ such that $\Gamma\cup\{\alpha\}\subsetneq\lang$ and $\Gamma\cup\{\alpha\}$ is not finitely satisfiable.
        \item $\fspecq$ holds in $\mstr$ if, for all $\Gamma\subsetneq\lang$, there exists $\alpha\in\lang$ such that $\Gamma\cup\{\alpha\}\subsetneq\lang$ and $\Gamma\cup\{\alpha\}$ is not finitely satisfiable.
        \item $\fpfecq$ holds in $\mstr$ if, for all $\Gamma\subsetneq\lang$, there exists $\Delta\subsetneq\lang$ such that $\Gamma\subseteq\Delta$ and $\Delta$ is not finitely satisfiable.
    \end{enumerate}
    The postfix $\mathsf{finsat}$ in the names of the above principles indicate that these are formulated in terms of finite (un)satisfiability.
\end{dfn}

\begin{rem}
    Clearly, if the $\amst$ under consideration is compact, then the -$\mathsf{finsat}$ variants are equivalent to their corresponding -$\mathsf{sat}$ counterparts. Thus, in the presence of compactness the implications that exist between the -$\mathsf{sat}$ principles will hold between the corresponding -$\mathsf{finsat}$ principles.   
\end{rem}

Using arguments similar to the one in Theorem \ref{thm:sat_imp}, one can easily show the following theorem.

\begin{thm}[Implications Between $\mathsf{finsat}$-variants]{\label{thm:finsat_imp}}
    Suppose $\mstr=(\mathbf{M},\models,\pow(\lang))$ is an $\amst$. Then, the following statements hold.
    \begin{enumerate}[label=(\roman*)]
        \item If $\fspecq$ holds in $\mstr$, then $\fgecq$ holds in it.\label{thm:fspecq=>fgecq}
        \item If $\fspecq$ holds in $\mstr$ then $\fpfecq$ also holds in it.\label{thm:fspecq=>fpfecq}        
        \item If $\fgecq$ holds in $\mstr$, then $\fsecq$ holds in $\mstr$.\label{thm:fgecq=>fsecq}
        \item If $\fspecq$ holds in $\mstr$ then $\fsecq$ also holds in it.\label{thm:fspecq=>fsecq}  
        \item If $\fpfecq$ holds in $\mstr$, then $\fsecq$ holds in $\mstr$.\label{thm:fpfecq=>secq}        
    \end{enumerate}
\end{thm}

The diagram depicting the interconnections between these principles of explosion is similar to Figure~\ref{fig:sat-exp}:

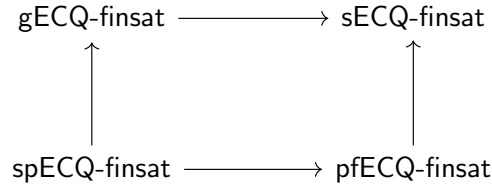
\begin{figure}[ht]
    \centering
\[\begin{tikzcd}
	{\mathsf{gECQ\text{-}finsat}} && {\mathsf{sECQ\text{-}finsat}} \\
	\\
	{\mathsf{spECQ\text{-}finsat}} && {\mathsf{pfECQ\text{-}finsat}}
	\arrow[from=1-1, to=1-3]
	\arrow[from=3-1, to=1-1]
	\arrow[from=3-1, to=3-3]
	\arrow[from=3-3, to=1-3]
\end{tikzcd}\]
    \caption{Semantic explosion principles - the $\mathsf{sat}$ variants}
    \label{fig:finsat-exp}
\end{figure}
The intended interpretation of the figure is the same as Figure \ref{fig:sat-exp}, namely, given an $\amst$ if it satisfies the principle of explosion at the tail of an arrow, it also satisfies the principle of explosion at the tip of the same arrow.

We now turn to showing that no other implication among these principles of explosion holds.

\begin{exa}[$\fsecq\centernot{\implies}\fgecq$]{\label{exm:fsecq≠>fgecq}}
    Let $\mstr=(\NN,\models, \mathcal{P}(\NN))$ be an $\amst$ where for all $\Gamma\cup\{m\}\subseteq \mathbb{N}$, $m\not\models \Gamma$ iff $|\Gamma|\ge 3$. In other words, a set is not satisfiable iff its cardinality is at least 3. 

    We note that $\mathfrak{M}$ satisfies $\fsecq$ because $\{0,1,2\}$ is not satisfiable. However, as every set of cardinality $\le 2$ is satisfiable, $\mathfrak{M}$ does not satisfy $\fgecq$.   
\end{exa}

\begin{rem}
\begin{enumerate}[label=(\roman*)]
    \item Note that $\fsecq$ does not imply $\fspecq$. Otherwise, as $\fspecq$ implies $\fgecq$, it would follow that $\fsecq$ implies $\fgecq$ $-$ contradicting the above example. 
    \item Let us consider the same $\amst$ as above and choose $\Gamma\subsetneq \NN$. 
    
    If $\Gamma=\emptyset$ then there exists a set, say $\{0,1,2\}\subsetneq \NN$ containing $\Gamma$ that is not satisfiable, or equivalently, not finitely satisfiable. If $\Gamma$ is nonempty and finite then $\Gamma\cup\{\max(\Gamma)+1,\max(\Gamma)+2,\max(\Gamma)+3\}$ - where $\max(\Gamma)$ denotes the maximum of $\Gamma$ - is such that $\Gamma\cup\{\max(\Gamma)+1,\max(\Gamma)+2,\max(\Gamma)+3\}\subsetneq \NN$. Since the cardinality of $\Gamma\cup\{\max(\Gamma)+1,\max(\Gamma)+2,\max(\Gamma)+3\}$ is at least 3, it is not satisfiable, and hence, not finitely satisfiable. Finally, if $\Gamma$ is infinite then choose any three elements belonging to $\Gamma$, say $\{m,n,k\}$. Since $\{m,n,k\}$ is not satisfiable, $\Gamma$ itself is not finitely satisfiable. Thus, for all $\Gamma\subsetneq \NN$ there exists $\Delta\subsetneq\NN$ such that $\Gamma\subseteq \Delta$ and $\Delta$ is not finitely satisfiable. This, howvever, implies that $\mstr$ satisfies $\fpfecq$ even though it does not satisfy $\fgecq$ as delineated in the above example. 
    
    Thus, $\fpfecq$ does not imply $\fgecq$.
    \item Since $\fspecq$ implies $\fgecq$ by Theorem~\ref{thm:finsat_imp}(i), while $\fpfecq$ does not imply $\fgecq$, as shown in (ii), we can conclude that $\fpfecq$ does not imply $\fspecq$.
\end{enumerate}    
\end{rem}
\begin{exa}[$\fgecq\centernot{\implies}\fpfecq$]{\label{exm:fgecq≠>fpfecq}}
    Consider the $\amst$ $\mstr=(\NN,\models, \mathcal{P}(\lang))$ where for all $\Gamma\cup\{m\}\subseteq \NN$, $m\models \Gamma$ iff $\Gamma\ne\{0\}$. In other words, $\{0\}$ is the unique unsatisfiable set. In particular, this implies that $\NN\setminus\{0\}$ is finitely satisfiable. Thus, there exists no $\Delta\subsetneq \NN$ such that $\NN\setminus\{0\}\subseteq\Delta\subsetneq \NN$ and $\Delta$ is not finitely satisfiable. Consequently, $\mstr$ does not satisfy $\fpfecq$. However, since for all $n\in \NN$, $\{n,0\}$ is not finitely satisfiable, $\mstr$ satisfies $\fgecq$. 
\end{exa}
\begin{rem}
    \begin{enumerate}[label=(\roman*)]
        \item Since by Theorem \ref{thm:finsat_imp}, $\fgecq$ implies $\fsecq$, from the above example it follows that $\fsecq$ does not imply $\fpfecq$.
        \item We note that $\fgecq$ does not imply $\fspecq$ either. Otherwise, as $\fspecq$ implies $\fpfecq$ (by Theorem \ref{thm:finsat_imp}(ii)), it would follow that $\fgecq$ implies $\fpfecq$ - contradicting the above example.       
    \end{enumerate}   
\end{rem}
\subsubsection{Characterization theorems}

We now turn to characterization results for the four principles of explosion in this subsection. Before going into the formal details of the theorems, we would like to point out that, unlike satisfiability, finite satisfiability is a `hereditary property', i.e., if a set is finitely satisfiable, every subset of the same is also finite satisfiable. This property turns out to be of crucial importance in the characterization theorems (cf. Theorems \ref{thm:char_secq} and \ref{thm:char_fsecq}).

\begin{thm}[Characterization of $\fsecq$]\label{thm:char_fsecq} 
Suppose $\mstr=(\mathbf{M},\models,\pow(\lang))$ is an $\amst$. The following conditions are equivalent.
\begin{enumerate}[label=(\roman*)]
    \item $\fsecq$ holds in $\mstr$.
    \item $\lang$ is not finitely satisfiable.
    \item For all $\alpha\in\lang$, there exists a finite $\Gamma\subseteq\lang$ such that $\Gamma\cup\{\alpha\}\subsetneq\lang$ and $\Gamma\cup\{\alpha\}$ is not finitely satisfiable.
    \item For all $\alpha\in\lang$, there exists $\beta\in\lang\setminus\{\alpha\}$ such that $\lang\setminus\{\beta\}$ is not finitely satisfiable.
\end{enumerate}
\end{thm}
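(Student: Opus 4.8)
The plan is to prove the four conditions equivalent by running the cycle $(\mathrm{i})\Rightarrow(\mathrm{ii})\Rightarrow(\mathrm{iv})\Rightarrow(\mathrm{iii})\Rightarrow(\mathrm{i})$, treating statement (ii) --- that $\lang$ itself is not finitely satisfiable --- as the hub. The one structural fact that powers every step is the \emph{heredity} of finite satisfiability noted just before the theorem: if $\Sigma'\subseteq\Sigma$ and $\Sigma$ is finitely satisfiable, then so is $\Sigma'$; contrapositively, any set containing a finite unsatisfiable set is itself not finitely satisfiable. Throughout I also use that $\lang$ is infinite. Note too that, by definition, $\lang$ is not finitely satisfiable precisely when it has a finite subset that is not satisfiable.

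For $(\mathrm{i})\Rightarrow(\mathrm{ii})$, fix any $\alpha\in\lang$ and let $\Gamma$ witness $\fsecq$ for $\alpha$, so $\Gamma\cup\{\alpha\}\subsetneq\lang$ is not finitely satisfiable. Since $\Gamma\cup\{\alpha\}\subseteq\lang$, heredity immediately forbids $\lang$ from being finitely satisfiable. For $(\mathrm{ii})\Rightarrow(\mathrm{iv})$, unfold (ii) to obtain a finite $\Gamma_0\subseteq\lang$ that is not satisfiable. Given $\alpha\in\lang$, the set $\lang\setminus(\Gamma_0\cup\{\alpha\})$ is infinite, hence nonempty, so pick $\beta$ in it; then $\beta\in\lang\setminus\{\alpha\}$ and $\Gamma_0\subseteq\lang\setminus\{\beta\}$, which shows $\lang\setminus\{\beta\}$ is not finitely satisfiable.

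For $(\mathrm{iv})\Rightarrow(\mathrm{iii})$, given $\alpha\in\lang$ take the $\beta\in\lang\setminus\{\alpha\}$ provided by (iv) together with a finite $\Gamma_0\subseteq\lang\setminus\{\beta\}$ that is not satisfiable, and set $\Gamma:=\Gamma_0$. Then $\Gamma$ is finite, $\Gamma\cup\{\alpha\}$ is finite and therefore a proper subset of the infinite $\lang$, and $\Gamma\cup\{\alpha\}\supseteq\Gamma_0$, so $\Gamma\cup\{\alpha\}$ is not finitely satisfiable. Finally $(\mathrm{iii})\Rightarrow(\mathrm{i})$ is immediate: (iii) is just (i) with the extra demand that the witness $\Gamma$ be finite, so dropping that demand yields (i).

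I expect no serious obstacle here; the content is light. The only points that need care are (a) invoking heredity --- not mere satisfiability --- in $(\mathrm{i})\Rightarrow(\mathrm{ii})$, and (b) the cardinality bookkeeping in $(\mathrm{ii})\Rightarrow(\mathrm{iv})$, where the deleted element $\beta$ must be chosen to avoid both $\alpha$ and the finite unsatisfiable set $\Gamma_0$, which is exactly where infiniteness of $\lang$ enters. One could alternatively first record, in the spirit of Remark~\ref{rem:alt_secq}, the reformulation ``for every $\alpha\in\lang$ there is $\Gamma\subsetneq\lang$ with $\alpha\in\Gamma$ and $\Gamma$ not finitely satisfiable,'' but this is not needed for the cycle above.
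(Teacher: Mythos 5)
Your proposal is correct and follows essentially the same route as the paper: a cycle of implications through the same four conditions, powered by the heredity of finite satisfiability and the infiniteness of $\lang$ (the paper orders the cycle $(\mathrm{i})\Rightarrow(\mathrm{ii})\Rightarrow(\mathrm{iii})\Rightarrow(\mathrm{iv})\Rightarrow(\mathrm{i})$, but your permutation of (iii) and (iv) changes nothing of substance, and your $(\mathrm{ii})\Rightarrow(\mathrm{iv})$ step is just the paper's two middle steps composed). All individual steps check out.
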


\begin{proof}
   \underline{(i) $\implies$ (ii)}: Suppose $\fsecq$ holds in $\mstr$ and let $\alpha\in \lang$. Then, there exists $\Gamma\subsetneq\lang$ such that $\Gamma\cup\{\alpha\}$ is not finitely satisfiable. So, there exists a finite $\Gamma_0\subseteq \Gamma\cup\{\alpha\}$ that is not satisfiable. Thus, $\lang$ is not finitely satisfiable.  

    \underline{(ii) $\implies$ (iii)}: Suppose $\lang$ is not finitely satisfiable. So, there exists a finite $\Gamma\subseteq\lang$ that is not satisfiable. Let $\alpha\in\lang$. Since $\Gamma$ is a finite subset of $\Gamma\cup\{\alpha\}$ and is not satisfiable, $\Gamma\cup\{\alpha\}$ is not finitely satisfiable. Moreover, $\Gamma\cup\{\alpha\}$ is finite, while $\lang$ is infinite. So, $\Gamma\cup\{\alpha\}\subsetneq\lang$.

    \underline{(iii) $\implies$ (iv)}: Suppose the statement (iii) holds. Let $\alpha\in\lang$. Then, by the assumed condition, there exists a finite $\Gamma\subseteq\lang$ such that $\Gamma\cup\{\alpha\}\subsetneq\lang$ and $\Gamma\cup\{\alpha\}$ is not finitely satisfiable. Since $\Gamma\cup\{\alpha\}\subsetneq\lang$, there exists $\beta\in\lang\setminus(\Gamma\cup\{\alpha\})$. Then, $\beta\in\lang\setminus\{\alpha\}$ and $\Gamma\cup\{\alpha\}\subseteq\lang\setminus\{\beta\}$. Thus, as $\Gamma\cup\{\alpha\}$ is not finitely satisfiable, $\lang\setminus\{\beta\}$ is not finitely satisfiable either.

    \underline{(iv) $\implies$ (i)}: Suppose the statement (iv) holds and let $\alpha\in\lang$. Then, there exists $\beta\in\lang\setminus\{\alpha\}$ such that $\lang\setminus\{\beta\}$ is not finitely satisfiable. Now, $\alpha\in\lang\setminus\{\beta\}$ and $\lang\setminus\beta\subsetneq\lang$. Thus, $\fsecq$ holds in $\mstr$. 
\end{proof}

\begin{rem}
    By the above theorem, $\fsecq$ holds in $\mstr$ iff, for all $\alpha\in\lang$, there exists $\beta\in\lang\setminus\{\alpha\}$ such that $\lang\setminus\{\beta\}$ is not satisfiable relative to $\mathcal{K}=\{\Delta\subseteq\lang\setminus\{\beta\}\mid\,\Delta\hbox{ is finite}\}$ (cf. Theorem \ref{thm:char_secq}). 
\end{rem}

\begin{cor}
    Suppose $\mstr=(\mathbf{M},\models,\pow(\lang))$ is a normal $\amst$ and $\mathcal{S}=(\lang,\vdash)$ is the logical structure induced by $\mstr$. If $\mathcal{S}$ is finitary and $\fsecq$ holds in $\mstr$, then $\mstr$ is compact.
\end{cor}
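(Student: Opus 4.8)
The plan is to invoke the characterization of $\fsecq$ from Theorem~\ref{thm:char_fsecq} to extract a finite unsatisfiable set, and then apply Corollary~\ref{cor:finunsat=>compact}. Concretely, the first step is to observe that since $\fsecq$ holds in $\mstr$, condition (ii) of Theorem~\ref{thm:char_fsecq} gives us that $\lang$ is not finitely satisfiable. By the definition of finite satisfiability, this means there is a finite subset $\Pi\subseteq\lang$ that is not satisfiable in $\mstr$; thus $\Pi$ is a finite unsatisfiable set.

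The second step is then immediate: $\mstr$ is assumed to be a normal $\amst$ whose induced logical structure $\mathcal{S}$ is finitary, and we have just produced a finite unsatisfiable set $\Pi$. These are exactly the hypotheses of Corollary~\ref{cor:finunsat=>compact}, which yields that $\mstr$ is compact. This completes the argument.

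I do not expect a genuine obstacle here, since the corollary is essentially a two-step bookkeeping consequence of results already established in the excerpt. The only point requiring a moment's care is the passage from ``$\lang$ is not finitely satisfiable'' to ``there exists a \emph{finite} unsatisfiable set,'' but this is just the contrapositive of the definition of finite satisfiability: $\lang$ fails to be finitely satisfiable precisely when some finite subset of $\lang$ is not satisfiable. One could alternatively go through condition (iii) or (iv) of Theorem~\ref{thm:char_fsecq} to name the finite unsatisfiable set explicitly (e.g.\ $\Gamma\cup\{\alpha\}$ for a suitable finite $\Gamma$), but this is not necessary. Note also that the finitary-ness hypothesis on $\mathcal{S}$ is used only in the appeal to Corollary~\ref{cor:finunsat=>compact} and plays no role in obtaining the finite unsatisfiable set itself.
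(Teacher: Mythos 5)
Your proposal is correct and matches the paper's proof essentially verbatim: both extract a finite unsatisfiable set from the failure of finite satisfiability of $\lang$ (via Theorem~\ref{thm:char_fsecq}) and then apply Corollary~\ref{cor:finunsat=>compact}. No issues.
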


\begin{proof}
    Since $\mstr$ satisfies $\fsecq$, by Theorem \ref{thm:char_fsecq}, $\lang$ is not finitely satisfiable, i.e., there exists a finite unsatisfiable set. So, by Corollary \ref{cor:finunsat=>compact}, it follows that $\mstr$ is compact.
\end{proof}

\begin{thm}[Characterization of $\fpfecq$]\label{thm:char_fpfecq}
    Suppose $\mstr=(\mathbf{M},\models,\pow(\lang))$ is an $\amst$. $\fpfecq$ holds in $\mstr$ iff, for all $\alpha\in\lang$, $\lang\setminus\{\alpha\}$ is not finitely satisfiable.
\end{thm}

\begin{proof}
    Suppose $\fpfecq$ holds in $\mstr$ and $\alpha\in\lang$. Then, as $\lang\setminus\{\alpha\}\subsetneq\lang$, there exists $\Delta\subsetneq\lang$ such that $\lang\setminus\{\alpha\}\subseteq\Delta$ and $\Delta$ is not finitely satisfiable. Clearly, $\alpha\notin\Delta$, since otherwise, $\Delta=\lang$, a contradiction. Thus, $\Delta=\lang\setminus\{\alpha\}$. Hence, $\lang\setminus\{\alpha\}$ is not finitely satisfiable.

    Conversely, suppose $\lang\setminus\{\alpha\}$ is not finitely satisfiable for all $\alpha\in\lang$. Moreover, suppose $\fpfecq$ fails in $\mstr$. Then, there exists $\Gamma\subsetneq\lang$ such that, for all $\Delta\subsetneq\lang$, if $\Gamma\subseteq\Delta$, then $\Delta$ is finitely satisfiable. Now, as $\Gamma\subsetneq\lang$, there exists $\alpha\in\lang\setminus\Gamma$. Then, $\Gamma\subseteq\lang\setminus\{\alpha\}\subsetneq\lang$. So, $\lang\setminus\{\alpha\}$ is finitely satisfiable, which contradicts our assumption. Hence, $\fpfecq$ holds in $\mstr$. 
\end{proof}

Let $\mstr=(\mathbf{M},\models, \mathcal{P}(\lang))$ be an $\amst$. If $\mstr$ satisfies $\fsecq$ then by Theorem \ref{thm:char_fsecq}, there exists a finite subset of $\lang$ that is not satisfiable. The following result says that if, moreover, $\mstr$ does not satisfy $\fpfecq$, then the finite unsatisfiable set(s) cannot be mutually disjoint.

\begin{thm}\label{thm:fin+disj+nsat=>fpfecq}
    Suppose $\mstr=(\mathbf{M},\models,\pow(\lang))$ is an $\amst$. Moreover, suppose there exist $\Sigma,\Delta\subseteq\lang$ such that $\Sigma\cup\Delta$ is finite, $\Sigma\cap\Delta=\emptyset$, and $\Sigma,\Delta$ are not satisfiable. Then, $\fpfecq$ holds in $\mstr$.
\end{thm}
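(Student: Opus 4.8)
The plan is to reduce everything to the characterization of $\fpfecq$ already available, namely Theorem~\ref{thm:char_fpfecq}: $\fpfecq$ holds in $\mstr$ if and only if $\lang\setminus\{\alpha\}$ is not finitely satisfiable for every $\alpha\in\lang$. So the whole task becomes: given the two finite, disjoint, unsatisfiable sets $\Sigma$ and $\Delta$, produce, for each $\alpha\in\lang$, a finite unsatisfiable subset of $\lang\setminus\{\alpha\}$. Note first that since $\Sigma\cup\Delta$ is finite, both $\Sigma$ and $\Delta$ are finite, and by hypothesis neither is satisfiable.

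Next I would fix an arbitrary $\alpha\in\lang$ and use disjointness. Because $\Sigma\cap\Delta=\emptyset$, the element $\alpha$ cannot lie in both $\Sigma$ and $\Delta$; hence at least one of these sets omits $\alpha$ — say $\alpha\notin\Sigma$, the other case being symmetric. Then $\Sigma\subseteq\lang\setminus\{\alpha\}$, and $\Sigma$ is a finite subset of $\lang\setminus\{\alpha\}$ that is not satisfiable. Consequently $\lang\setminus\{\alpha\}$ has a finite unsatisfiable subset, i.e., it is not finitely satisfiable.

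Since $\alpha\in\lang$ was arbitrary, $\lang\setminus\{\alpha\}$ fails to be finitely satisfiable for every $\alpha$, and Theorem~\ref{thm:char_fpfecq} then yields that $\fpfecq$ holds in $\mstr$.

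I do not expect a genuine obstacle here. The only point worth flagging is that the disjointness hypothesis is exactly what lets a single choice between $\Sigma$ and $\Delta$ work uniformly for the universally quantified $\alpha$ appearing in Theorem~\ref{thm:char_fpfecq}; without disjointness an $\alpha$ lying in both sets would block the argument. Everything else is a direct unwinding of the definitions of finite satisfiability and of $\fpfecq$.
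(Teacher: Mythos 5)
Your proof is correct and follows essentially the same route as the paper: both reduce to the characterization in Theorem~\ref{thm:char_fpfecq} and then use disjointness to find, for each $\alpha$, one of $\Sigma,\Delta$ contained in $\lang\setminus\{\alpha\}$ (the paper spells this out as three cases; you compress it into ``at least one omits $\alpha$,'' which is a harmless streamlining).
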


\begin{proof}
    Let $\alpha\in\lang$. Since $\Sigma\cap\Delta=\emptyset$, the following are the only possible cases.

    \textbf{Case 1:} $\alpha\notin\Sigma$ but $\alpha\in\Delta$

    In this case $\Sigma\subseteq\lang\setminus\{\alpha\}$. Since $\Sigma$ is not satisfiable and finite, this implies that $\lang\setminus\{\alpha\}$ is not finitely satisfiable.

    \textbf{Case 2:} $\alpha\notin\Delta$ but $\alpha\in\Sigma$

    In this case $\Delta\subseteq\lang\setminus\{\alpha\}$. Since $\Delta$ is finite and not satisfiable, this implies again that $\lang\setminus\{\alpha\}$ is not finitely satisfiable.

    \textbf{Case 3:} $\alpha\notin\Sigma\cup\Delta$

    In this case $\Sigma\subseteq\Sigma\cup\Delta\subseteq\lang\setminus\{\alpha\}$. Then, again as $\Sigma$ is finite and not satisfiable, $\lang\setminus\{\alpha\}$ is not finitely satisfiable.

    Thus, $\lang\setminus\{\alpha\}$ is not finitely satisfiable for all $\alpha\in\lang$. Hence, by Theorem \ref{thm:char_fpfecq}, $\fpfecq$ holds in $\mstr$.
\end{proof}

\begin{cor}\label{cor:disj+nfsat=>fpfecq}
    Suppose $\mstr=(\mathbf{M},\models,\pow(\lang))$ is an $\amst$. Moreover, suppose there exist $\Sigma,\Delta\subseteq\lang$ such that $\Sigma\cap\Delta=\emptyset$, and $\Sigma,\Delta$ are not finitely satisfiable. Then, $\fpfecq$ holds in $\mstr$.
\end{cor}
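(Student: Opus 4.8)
The plan is to reduce this corollary directly to Theorem \ref{thm:fin+disj+nsat=>fpfecq} by passing from the (possibly infinite) disjoint unsatisfiable sets $\Sigma,\Delta$ to disjoint \emph{finite} unsatisfiable witnesses. First I would unfold the hypothesis that $\Sigma$ is not finitely satisfiable: by the definition of finite satisfiability, this means there is a finite subset $\Sigma_0\subseteq\Sigma$ that is not satisfiable. Symmetrically, since $\Delta$ is not finitely satisfiable, there is a finite subset $\Delta_0\subseteq\Delta$ that is not satisfiable.

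Next I would observe that $\Sigma_0$ and $\Delta_0$ inherit the disjointness of $\Sigma$ and $\Delta$: since $\Sigma_0\subseteq\Sigma$ and $\Delta_0\subseteq\Delta$, we have $\Sigma_0\cap\Delta_0\subseteq\Sigma\cap\Delta=\emptyset$. Moreover $\Sigma_0\cup\Delta_0$ is a union of two finite sets, hence finite, and both $\Sigma_0$ and $\Delta_0$ are (by construction) not satisfiable.

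Finally, I would apply Theorem \ref{thm:fin+disj+nsat=>fpfecq} to the pair $\Sigma_0,\Delta_0$: all three of its hypotheses—$\Sigma_0\cup\Delta_0$ finite, $\Sigma_0\cap\Delta_0=\emptyset$, and $\Sigma_0,\Delta_0$ not satisfiable—are now in hand, so the theorem yields that $\fpfecq$ holds in $\mstr$, completing the proof.

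There is essentially no serious obstacle here; the only subtlety worth flagging explicitly is that unsatisfiability is \emph{not} an upward-hereditary property, so one cannot simply say ``$\Sigma$ unsatisfiable implies some finite superset is unsatisfiable''—instead one must go \emph{down} to a finite unsatisfiable subset, which is exactly what ``not finitely satisfiable'' delivers. Everything else is bookkeeping about finiteness and disjointness being preserved under taking subsets.
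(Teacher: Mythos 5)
Your proof is correct and follows essentially the same route as the paper's: extract finite unsatisfiable subsets $\Sigma_0\subseteq\Sigma$ and $\Delta_0\subseteq\Delta$ from the failure of finite satisfiability, note that disjointness and finiteness of the union are inherited, and apply Theorem \ref{thm:fin+disj+nsat=>fpfecq}. Your explicit remark about needing to go \emph{down} to a finite unsatisfiable subset is a nice clarification of exactly the point the paper's (somewhat typo-ridden) proof relies on.
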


\begin{proof}
    Since $\Sigma$ and $\Delta$ are not finitely satisfiable, there exist a finite $\Gamma^\prime\subseteq\Gamma$ and a $\Delta^\prime\subseteq\Delta$ such that $\Gamma^\prime,\Delta^\prime$ are not satisfiable. Moreover, since $\Sigma\cap\Delta=\emptyset$, $\Sigma^\prime\cap\Delta^\prime=\emptyset$ as well. Hence, by Theorem \ref{thm:fin+disj+nsat=>fpfecq}, $\fpfecq$ holds in $\mstr$.
\end{proof}

\begin{thm}[Characterization of $\fspecq$]\label{thm:char_fspecq}
    Suppose $\mstr=(\mathbf{M},\models,\pow(\lang))$ is an $\amst$. $\fspecq$ holds in $\mstr$ iff $\fpfecq$ holds in $\mstr$ and there exists $\varphi\in\lang$ such that $\{\varphi\}$ is not finitely satisfiable.
\end{thm}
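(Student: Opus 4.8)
The plan is to imitate the proof of Theorem~\ref{thm:specq<=>pfecq}, while exploiting that finite satisfiability is hereditary: every subset of a finitely satisfiable set is finitely satisfiable, equivalently, if $A\subseteq B$ and $A$ is not finitely satisfiable then neither is $B$ (a non-satisfiable finite subset of $A$ is automatically a finite subset of $B$). This hereditary property is what lets us dispense with the compactness-flavoured extra hypothesis that was needed in Theorem~\ref{thm:specq<=>pfecq}.

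For the ``only if'' direction, assume $\fspecq$ holds in $\mstr$. Then $\fpfecq$ holds at once by Theorem~\ref{thm:finsat_imp}(ii). For the witness $\varphi$, apply the definition of $\fspecq$ to the proper subset $\emptyset\subsetneq\lang$: this produces $\varphi\in\lang$ with $\{\varphi\}=\emptyset\cup\{\varphi\}\subsetneq\lang$ and $\{\varphi\}$ not finitely satisfiable, as required.

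For the ``if'' direction, assume $\fpfecq$ holds and fix $\varphi\in\lang$ with $\{\varphi\}$ not finitely satisfiable. I would verify the defining clause of $\fspecq$ for an arbitrary $\Gamma\subsetneq\lang$ by cases. If $\Gamma\neq\emptyset$ and $\Gamma$ is not finitely satisfiable, any $\alpha\in\Gamma$ works, since then $\Gamma\cup\{\alpha\}=\Gamma\subsetneq\lang$ is not finitely satisfiable. Otherwise ($\Gamma=\emptyset$, or $\Gamma$ is finitely satisfiable) I would take $\varphi$ as the witness: since $\{\varphi\}\subseteq\Gamma\cup\{\varphi\}$, heredity makes $\Gamma\cup\{\varphi\}$ not finitely satisfiable; and $\Gamma\cup\{\varphi\}\subsetneq\lang$, because otherwise $\Gamma\subsetneq\lang$ forces $\varphi\notin\Gamma$ and hence $\Gamma=\lang\setminus\{\varphi\}$, which is impossible: if $\Gamma=\emptyset$ this contradicts $\lang$ being infinite, and if $\Gamma$ is finitely satisfiable it contradicts Theorem~\ref{thm:char_fpfecq}, which says $\lang\setminus\{\varphi\}$ is not finitely satisfiable whenever $\fpfecq$ holds.

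I do not foresee a genuine difficulty; the argument is largely bookkeeping. The only point calling for care is the case analysis in the ``if'' direction and, within it, checking that the chosen witness always yields a \emph{proper} subset of $\lang$ — which is exactly where the hypothesis $\fpfecq$ enters, via Theorem~\ref{thm:char_fpfecq}.
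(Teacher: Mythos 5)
Your proposal is correct and follows essentially the same route as the paper's proof: the forward direction extracts $\fpfecq$ and applies $\fspecq$ to $\emptyset$, and the converse splits into cases, using heredity of finite unsatisfiability and Theorem~\ref{thm:char_fpfecq} to rule out $\Gamma\cup\{\varphi\}=\lang$. The only (immaterial) difference is the organization of the case analysis — the paper splits on whether $\Gamma$ has the form $\lang\setminus\{\beta\}$, while you split on whether $\Gamma$ is already non-finitely-satisfiable — but both hinge on exactly the same two facts.
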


\begin{proof}
    Suppose $\fspecq$ holds in $\mstr$. Let $\Gamma\subsetneq\lang$. Then, as $\fspecq$ holds in $\mstr$, there exists $\alpha\in\lang$ such that $\Gamma\cup\{\alpha\}\subsetneq\lang$ and $\Gamma\cup\{\alpha\}$ is not finitely satisfiable. Since $\Gamma\subseteq\Gamma\cup\{\alpha\}$, this implies that $\fpfecq$ holds in $\mstr$. Now, $\emptyset\subsetneq\lang$, and so, again by $\fspecq$, there exists $\varphi\in\lang$ such that $\emptyset\cup\{\varphi\}=\{\varphi\}\subsetneq\lang$ (since $\lang$ is infinite) and $\{\varphi\}$ is not finitely satisfiable. 

    Conversely, suppose $\fpfecq$ holds in $\mstr$ and there exists $\varphi\in\lang$ such that $\{\varphi\}$ is not finitely satisfiable. Let $\Gamma\subsetneq\lang$. If $\Gamma=\emptyset$, then $\Gamma\cup\{\varphi\}=\{\varphi\}\subsetneq\lang$ and $\Gamma\cup\{\varphi\}$ is not finitely satisfiable. On the other hand, if $\Gamma\neq\emptyset$, then the following two cases arise.

    \textbf{Case 1:} $\Gamma=\lang\setminus\{\beta\}$ for some $\beta\in\lang$. Then, by Theorem \ref{thm:char_fpfecq}, $\Gamma$ is not finitely satisfiable. So, for any $\gamma\in\Gamma$, $\Gamma\cup\{\gamma\}=\Gamma\subsetneq\lang$ and is not finitely satisfiable.

    \textbf{Case 2:} $\Gamma\neq\lang\setminus\{\beta\}$ for all $\beta\in\lang$. Then, $\Gamma\cup\{\varphi\}\subsetneq\lang$ and $\Gamma\cup\{\varphi\}$ is not finitely satisfiable as $\{\varphi\}$ is a subset of it that is not finitely satisfiable. 

    Thus, for all $\Gamma\subsetneq\lang$, there exists $\alpha\in\lang$ such that $\Gamma\cup\{\alpha\}\subsetneq\lang$ and $\Gamma\cup\{\alpha\}$ is not finitely satisfiable.
\end{proof}

A characterization for $\specq$ was obtained in Theorem \ref{thm:char_specq}. We end this section with the following theorems, which gives alternative partial characterizations for $\specq$ via finite satisfiability.

\begin{thm}\label{thm:specq/fspecq}
    Suppose $\mstr=(\mathbf{M},\models,\pow(\lang))$ is an $\amst$. If $\specq$ holds in $\mstr$, then either $\fspecq$ holds in it, or there exists a unique $\varphi\in\lang$ such that $\{\varphi\}$ is not satisfiable and for all finite $\Gamma\subseteq\lang$, $\Gamma$ is not satisfiable iff $\varphi\in\Gamma$.
\end{thm}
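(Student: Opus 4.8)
The plan is to leverage the characterization theorems already proved: Theorem~\ref{thm:specq<=>pfecq}, Theorem~\ref{thm:char_fspecq}, and Theorem~\ref{thm:char_fpfecq}. From $\specq$, Theorem~\ref{thm:specq<=>pfecq} gives some $\varphi_0\in\lang$ with $\{\varphi_0\}$ not satisfiable; since $\{\varphi_0\}$ is a finite subset of itself, $\{\varphi_0\}$ is in particular not finitely satisfiable. Now assume $\fspecq$ fails. By Theorem~\ref{thm:char_fspecq}, $\fspecq$ is equivalent to the conjunction of $\fpfecq$ with the existence of a singleton that is not finitely satisfiable; the latter holds (witnessed by $\varphi_0$), so the failure of $\fspecq$ forces $\fpfecq$ to fail. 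By Theorem~\ref{thm:char_fpfecq}, this yields an element $\varphi\in\lang$ with $\lang\setminus\{\varphi\}$ finitely satisfiable. I claim this $\varphi$ is the element asserted in the theorem.

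The next step is routine subset chasing. Since finite satisfiability is hereditary, any set $\Sigma\subseteq\lang$ with $\varphi\notin\Sigma$ satisfies $\Sigma\subseteq\lang\setminus\{\varphi\}$ and hence is finitely satisfiable; equivalently, every finite subset of $\lang$ avoiding $\varphi$ is satisfiable. In particular $\{\varphi_0\}$ would be satisfiable unless $\varphi_0=\varphi$, so $\varphi_0=\varphi$ and $\{\varphi\}$ is not satisfiable; the same remark shows no other singleton is unsatisfiable, which gives the uniqueness clause. The forward direction of the ``iff'' is then immediate: a finite unsatisfiable $\Gamma$ cannot avoid $\varphi$, so $\varphi\in\Gamma$.

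The main obstacle is the converse direction: given finite $\Gamma$ with $\varphi\in\Gamma$, show $\Gamma$ is not satisfiable. I would argue by contradiction. Put $\Gamma':=\Gamma\setminus\{\varphi\}$; this is finite, hence a proper subset of $\lang$, and it avoids $\varphi$. Applying $\specq$ to $\Gamma'$ produces $\alpha\in\lang$ with $\Gamma'\cup\{\alpha\}\subsetneq\lang$ and $\Gamma'\cup\{\alpha\}$ not satisfiable. If $\alpha=\varphi$ then $\Gamma'\cup\{\alpha\}=\Gamma$, which was assumed satisfiable, a contradiction. If $\alpha\neq\varphi$ then $\Gamma'\cup\{\alpha\}$ is a finite subset of $\lang\setminus\{\varphi\}$ and so is satisfiable, again a contradiction. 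Hence $\Gamma$ is not satisfiable, and assembling the pieces gives the stated dichotomy. The only delicate point is noticing that $\specq$ must be fed $\Gamma\setminus\{\varphi\}$ rather than $\Gamma$ itself, so that both possible positions of the witness $\alpha$ lead to a contradiction.
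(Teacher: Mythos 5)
Your proof is correct and follows essentially the same route as the paper's: both hinge on the fact that if $\fspecq$ fails then $\lang\setminus\{\varphi\}$ is finitely satisfiable for the element $\varphi$ whose singleton is unsatisfiable, and both prove the converse direction of the ``iff'' by applying $\specq$ to $\Gamma\setminus\{\varphi\}$ and forcing the resulting witness to equal $\varphi$. The only cosmetic difference is that you reach the dichotomy contrapositively through Theorems~\ref{thm:char_fspecq} and~\ref{thm:char_fpfecq}, whereas the paper case-splits directly on whether $\lang\setminus\{\varphi\}$ is finitely satisfiable and, in the negative case, invokes Theorem~\ref{thm:fin+disj+nsat=>fpfecq} to establish $\fspecq$.
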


\begin{proof}
    Suppose $\specq$ holds in $\mstr$. Then, as $\emptyset\subsetneq\lang$, there exists $\varphi\in\lang$ such that $\emptyset\cup\{\varphi\}=\{\varphi\}\subsetneq\lang$ and $\{\varphi\}$ is not satisfiable. Now, $\lang\setminus\{\varphi\}$ is either finitely satisfiable or not.

    \textbf{Case 1:} $\lang\setminus\{\varphi\}$ is finitely satisfiable. Then, for all $\alpha\in\lang\setminus\{\varphi\}$, $\{\alpha\}$ is satisfiable. 
    
    We now claim that for any finite $\Gamma\subseteq\lang$, $\Gamma$ is not satisfiable iff $\varphi\in\Gamma$. Let $\Gamma$ be a finite subset of $\lang$. 
    
    Suppose $\Gamma$ is not satisfiable. Then, as $\lang\setminus\{\varphi\}$ is finitely satisfiable, $\Gamma\not\subseteq\lang\setminus\{\varphi\}$. So, $\varphi\in\Gamma$. 
    
    Conversely, suppose $\varphi\in\Gamma$. Let $\Sigma=\Gamma\setminus\{\varphi\}$. Then, $\Sigma$ is a finite subset of $\lang\setminus\{\varphi\}$, and hence, is satisfiable. Now, as $\specq$ holds in $\mstr$, there exists $\gamma\in\lang$ such that $\Sigma\cup\{\gamma\}\subsetneq\lang$ (since $\Sigma\cup\{\gamma\}$ is finite, while $\lang$ is infinite) and $\Sigma\cup\{\gamma\}$ is not satisfiable. Then, as $\lang\setminus\{\varphi\}$ is finitely satisfiable, $\Sigma\cup\{\gamma\}\not\subseteq\lang\setminus\{\varphi\}$, which implies that $\varphi\in\Sigma\cup\{\gamma\}$. Now, $\varphi\notin\Sigma=\Gamma\setminus\{\varphi\}$. Hence, $\gamma=\varphi$. So, $\Gamma=\Sigma\cup\{\varphi\}=\Sigma\cup\{\gamma\}$ is not satisfiable.

    Moreover, if $\varphi^\prime\in\lang$ be such that $\{\varphi^\prime\}$ is not satisfiable, then by the above arguments, $\varphi\in\{\varphi^\prime\}$, since $\{\varphi^\prime\}$ is a finite subset of $\lang$ that is not satisfiable. So, $\varphi=\varphi^\prime$, which implies that $\varphi$ is the unique element of $\lang$ such that $\{\varphi\}$ is not satisfiable. 

    \textbf{Case 2:} $\lang\setminus\{\varphi\}$ is not finitely satisfiable. Then, there exists a finite $\Gamma\subseteq\lang\setminus\{\varphi\}$ that is not satisfiable. Now, $\{\varphi\}$ is also not satisfiable. Moreover, $\Gamma\cup\{\varphi\}$ is finite and $\Gamma\cap\{\varphi\}=\emptyset$. So, by Theorem \ref{thm:fin+disj+nsat=>fpfecq}, $\fpfecq$ holds in $\mstr$. Now, $\{\varphi\}$ is a finite subset of itself and is not satisfiable. Thus, $\{\varphi\}$ is not finitely satisfiable. Hence, by Theorem \ref{thm:char_fspecq}, $\fspecq$ holds in $\mstr$.
\end{proof}

\begin{thm}\label{thm:char_specq=>}
    Suppose $\mstr=(\mathbf{M},\models,\pow(\lang))$ is an $\amst$. If $\specq$ holds in $\mstr$, then $\lang\setminus\{\alpha\}$ is finitely satisfiable for at most one $\alpha\in\lang$.
\end{thm}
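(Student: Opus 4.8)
The plan is to reduce the statement to the dichotomy already established in Theorem~\ref{thm:specq/fspecq}. Assume $\specq$ holds in $\mstr$. By Theorem~\ref{thm:specq/fspecq}, either $\fspecq$ holds in $\mstr$, or there is a \emph{unique} $\varphi\in\lang$ such that $\{\varphi\}$ is not satisfiable and, for every finite $\Gamma\subseteq\lang$, $\Gamma$ is not satisfiable iff $\varphi\in\Gamma$. I would treat these two cases separately.

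In the first case, $\fspecq$ holds, so by Theorem~\ref{thm:char_fspecq} $\fpfecq$ holds in $\mstr$, and then by Theorem~\ref{thm:char_fpfecq} the set $\lang\setminus\{\alpha\}$ is \emph{not} finitely satisfiable for \emph{any} $\alpha\in\lang$. In particular, the collection of $\alpha$ for which $\lang\setminus\{\alpha\}$ is finitely satisfiable is empty, which trivially satisfies the `at most one' claim.

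In the second case, let $\varphi$ be the unique witness supplied by Theorem~\ref{thm:specq/fspecq}. First, for any $\alpha\neq\varphi$, the singleton $\{\varphi\}$ is a finite subset of $\lang\setminus\{\alpha\}$ that contains $\varphi$, so by the stated property $\{\varphi\}$ is not satisfiable; hence $\lang\setminus\{\alpha\}$ is not finitely satisfiable. Second, for $\alpha=\varphi$, every finite $\Gamma\subseteq\lang\setminus\{\varphi\}$ fails to contain $\varphi$, so by the same property $\Gamma$ is satisfiable; hence $\lang\setminus\{\varphi\}$ \emph{is} finitely satisfiable. Thus in this case $\lang\setminus\{\alpha\}$ is finitely satisfiable precisely for $\alpha=\varphi$, i.e.\ for exactly one $\alpha$. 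Combining the two cases yields the theorem.

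The substantive work has been absorbed into Theorem~\ref{thm:specq/fspecq}, so I do not anticipate a real obstacle; the only point demanding care is the bookkeeping in the second case, where one must verify \emph{both} that no $\alpha\neq\varphi$ works and that $\alpha=\varphi$ does work, so as to land exactly on the `at most one' conclusion rather than conflating it with `none' or `at least one'.
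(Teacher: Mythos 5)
Your proof is correct, and it takes a somewhat different route from the paper's. The paper argues directly: it applies $\specq$ to $\emptyset$ to produce an unsatisfiable singleton $\{\alpha\}$, then splits on whether $\lang\setminus\{\alpha\}$ is finitely satisfiable — in the negative case invoking Theorem~\ref{thm:fin+disj+nsat=>fpfecq} and Theorem~\ref{thm:char_fpfecq} to conclude that \emph{no} set of the form $\lang\setminus\{\beta\}$ is finitely satisfiable, and in the affirmative case noting that $\{\alpha\}$ is a finite unsatisfiable subset of $\lang\setminus\{\beta\}$ for every $\beta\neq\alpha$. You instead route everything through the dichotomy of Theorem~\ref{thm:specq/fspecq}, treating it as a black box. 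The two arguments are at bottom the same case analysis (the two disjuncts of Theorem~\ref{thm:specq/fspecq} are produced in its proof by exactly the paper's case split on finite satisfiability of $\lang\setminus\{\varphi\}$), but your packaging buys brevity and makes the logical dependence on the preceding theorem explicit, at the cost of being less self-contained; the paper's version shows directly which more primitive facts ($\fpfecq$ from two disjoint finite unsatisfiable sets, and the characterization of $\fpfecq$) are actually doing the work. Your bookkeeping in the second case is careful and correct — showing both that every $\alpha\neq\varphi$ fails and that $\alpha=\varphi$ succeeds is slightly more than the ``at most one'' conclusion requires, but it is accurate and harmless.
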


\begin{proof}
    Suppose $\specq$ holds in $\mstr$. Then, since $\emptyset\subsetneq\lang$, there exists $\alpha\in\lang$ such that $\emptyset\cup\{\alpha\}=\{\alpha\}\subsetneq\lang$ is not satisfiable. Now, $\lang\setminus\{\alpha\}$ is either finitely satisfiable or not.

    If $\lang\setminus\{\alpha\}$ is not finitely satisfiable, then there is a finite $\Delta\subseteq\lang\setminus\{\alpha\}$ that is not satisfiable. So, $\Delta$ and $\{\alpha\}$ are two finite sets such that $\Delta\cap\{\alpha\}=\emptyset$, by Theorem \ref{thm:fin+disj+nsat=>fpfecq}, $\fpfecq$ holds in $\mstr$. Then, by Theorem \ref{thm:char_fpfecq}, $\lang\setminus\{\beta\}$ is not finitely satisfiable for all $\beta\in\lang$.

    On the other hand, if $\lang\setminus\{\alpha\}$ is finitely satisfiable, then for any $\beta\neq\alpha$, since $\{\alpha\}$ is a finite subset of $\lang\setminus\{\beta\}$ and is not satisfiable, $\lang\setminus\{\beta\}$ is not finitely satisfiable. So, $\lang\setminus\{\alpha\}$ is the only finitely satisfiable set of this form.
\end{proof}

\subsection{Semantic explosion principles - the \texorpdfstring{$\mathsf{sat}$}{sat} variants vis-à-vis
 the \texorpdfstring{$\mathsf{finsat}$}{finsat} variants}

In Subsections \ref{subsec:sat} and \ref{subsec:finsat} we have gradually introduced the $\mathsf{sat}$- as well as the $\mathsf{finsat}$-variants of the semantic explosion principles. Interconnections between them gave rise to Figures \ref{fig:sat-exp} and \ref{fig:finsat-exp} respectively. We now investigate the interconnections between all these principles of explosion. The resulting diagram is Figure \ref{fig:sem-exp}).

In addition to the implications between the explosion principles as recorded in Theorems \ref{thm:sat_imp} and \ref{thm:finsat_imp}, we also have the following.

\begin{thm}[Implications Between $\mathsf{sat}$- vis-à-vis $\mathsf{finsat}$-variants]{\label{thm:sem_imp}}
    Suppose $\mstr=(\mathbf{M},\models,\pow(\lang))$ is an $\amst$. Then, the following statements hold.
    \begin{enumerate}[label=(\roman*)]
        \item If $\specq$ holds in $\mstr$, then $\fgecq$ holds in it.\label{thm:specq=>fgecq}
        \item If $\gecq$ holds in $\mstr$, then $\fgecq$ holds in $\mstr$.\label{thm:gecq=>fgecq}
        \item If $\specq$ holds in $\mstr$ then $\fsecq$ also holds in it.\label{thm:specq=>fsecq}         
        \item If $\gecq$ holds in $\mstr$ then $\fsecq$ also holds in it.\label{thm:gecq=>fsecq}           
    \end{enumerate}
\end{thm}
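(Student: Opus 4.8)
The key observation underlying all four parts is elementary: if $\Pi\subseteq\lang$ is \emph{finite} and not satisfiable, then $\Pi$ is not finitely satisfiable (it is a finite subset of itself witnessing the failure), and hence \emph{every} set containing $\Pi$ is not finitely satisfiable. Once this is in place, each implication reduces to locating a finite unsatisfiable set of the right shape. The only point requiring care is the direction of the relationship between satisfiability and finite satisfiability — in general an unsatisfiable set need not be non-finitely-satisfiable, so it is precisely the \emph{finiteness} of the witnessing sets produced by $\gecq$ and $\specq$ that licenses the passage to the $\mathsf{finsat}$-variants; I would flag this explicitly.

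\textbf{Parts (ii) and (i).} For \ref{thm:gecq=>fgecq}, I would simply take any $\alpha\in\lang$, use $\gecq$ to obtain $\beta\in\lang$ with $\{\alpha,\beta\}$ not satisfiable, and note that $\{\alpha,\beta\}$ being finite, it is not finitely satisfiable — which is exactly what $\fgecq$ requires. For \ref{thm:specq=>fgecq}, I would first invoke Theorem~\ref{thm:specq<=>pfecq} (or argue directly from the definition of $\specq$ applied to $\emptyset\subsetneq\lang$) to get $\varphi\in\lang$ with $\{\varphi\}$ not satisfiable; then for an arbitrary $\alpha\in\lang$ the set $\{\alpha,\varphi\}$ contains the finite unsatisfiable set $\{\varphi\}$, hence is not finitely satisfiable, and so $\fgecq$ holds with witness $\beta=\varphi$.

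\textbf{Parts (iii) and (iv).} I would route these through the characterization of $\fsecq$ from Theorem~\ref{thm:char_fsecq}, namely that $\fsecq$ holds in $\mstr$ iff $\lang$ is not finitely satisfiable. From $\specq$ (resp. $\gecq$) the previous paragraph has already exhibited a finite unsatisfiable subset of $\lang$ — $\{\varphi\}$ (resp. $\{\alpha,\beta\}$) — so $\lang$ is not finitely satisfiable and $\fsecq$ follows. Alternatively, \ref{thm:specq=>fsecq} and \ref{thm:gecq=>fsecq} follow formally from \ref{thm:specq=>fgecq} and \ref{thm:gecq=>fgecq} together with Theorem~\ref{thm:finsat_imp}\ref{thm:fgecq=>fsecq}; I would likely present the direct argument and remark on this shortcut. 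I do not anticipate any real obstacle; the entire theorem is a bookkeeping exercise once the ``finite unsatisfiable $\Rightarrow$ not finitely satisfiable'' observation is isolated.
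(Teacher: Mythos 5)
Your proposal is correct and rests on exactly the same engine as the paper's proof: a finite unsatisfiable set (and hence any superset of one) is not finitely satisfiable, so the finite witnesses supplied by $\gecq$ and $\specq$ immediately yield the $\mathsf{finsat}$ conclusions. The only cosmetic differences are that for (i) you go directly via the unsatisfiable singleton $\{\varphi\}$ rather than through $\specq\Rightarrow\gecq$ as the paper does, and for (iii)--(iv) you use the characterization of $\fsecq$ (Theorem~\ref{thm:char_fsecq}) instead of chaining through Theorem~\ref{thm:finsat_imp}, a shortcut you yourself note; both variants are valid.
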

\begin{proof}
    \begin{enumerate}[label=(\roman*)]
        \item Since $\specq$ holds in $\mstr$ then by Theorem \ref{thm:sat_imp}(i) $\gecq$ holds in $\mstr$. Now, let $\alpha\in \lang$. By $\gecq$, there exists $\beta\in \lang$ such that $\{\alpha,\beta\}$ is not satisfiable. Since $\{\alpha,\beta\}$ is finite, this implies that $\{\alpha,\beta\}$ is not finitely satisfiable. Thus, $\gecq$ holds in $\mstr$.
        \item Follows from (i).
        \item Since $\specq$ holds in $\mstr$, by (i), $\fgecq$ holds in it. Then, by Theorem \ref{thm:finsat_imp}(iv), $\fsecq$ holds in $\mstr$.
        \item Since $\gecq$ holds in $\mstr$, by (ii), $\fgecq$ holds in it. Then, by Theorem \ref{thm:finsat_imp}(iii), $\fsecq$ holds in $\mstr$.
    \end{enumerate}
\end{proof}

\begin{rem}
    Since by the above theorem $\gecq$ implies $\fgecq$, from Example~\ref{exm:gecq/secq≠>pfecq/specq} it follows that $\fgecq$ does not imply $\pfecq$ or $\specq$.    
\end{rem}

We now proceed to show, via examples, that no other implication among these explosion principles is possible.

\begin{exa}[$\fspecq\centernot\implies\secq$]\label{exm:fspecq≠>gecq/secq}
    Let $\mstr=(\NN,\models,\pow(\NN))$ be an $\amst$, where $\models\,\subseteq\NN\times\pow(\NN)$ is defined as follows. For all $m\in\NN$ and $\Gamma\subseteq\NN$, $m\models\Gamma$ iff $\Gamma\neq\{n\}$ for some $n\neq0$. 

    Let $\Gamma\subsetneq\NN$. If $\Gamma=\emptyset$, then $\Gamma\cup\{1\}=\{1\}\subsetneq\NN$ is not satisfiable and hence, not finitely satisfiable. Next, if $\Gamma\neq\emptyset$ and there exists $n\in\Gamma$ such that $n\neq0$, then $\Gamma\cup\{n\}=\Gamma\subsetneq\NN$ and since $\{n\}$ is not satisfiable, $\Gamma\cup\{n\}$ is not finitely satisfiable. Finally, if $\Gamma=\{0\}$, then $\Gamma\cup\{1\}=\{0,1\}\subsetneq\NN$ and is not finitely satisfiable as $\{1\}\subseteq\{0,1\}$ is not satisfiable. Thus, $\fspecq$ holds in $\mstr$.

    Now, 
    for all $\Gamma\subseteq\lang$, $\Gamma\cup\{0\}$ is satisfiable. Thus, $\secq$ fails in $\mstr$ as well.
\end{exa}

\begin{rem}
    Since $\fspecq$ implies $\fgecq,\fpfecq$ and $\fsecq$, the above example also shows that none of these imply $\secq$. Consequently, as neither $\fsecq$ nor $\fpfecq$ imply $\secq$, $\fsecq$ or $\fpfecq$ does not imply $\specq,\gecq$ or $\pfecq$.
\end{rem}

\begin{exa}[$\pfecq\centernot\implies\fsecq$]\label{exm:pfecq≠>fsecq}
    Let $\mstr=(\NN,\models,\pow(\NN))$ be an $\amst$, where $\models\,\subseteq\NN\times\pow(\NN)$ is defined as follows. For any $\Gamma\cup\{m\}\subseteq\NN$, $m\not\models\Gamma$ iff either $\Gamma=\NN\setminus\{n\}$ for some $n\in\NN$ or $m\in\Gamma$.

    By definition of $\mstr$, $\NN\setminus\{n\}$ is not satisfiable for all $n\in\NN$. So, by Theorem \ref{thm:char_pfecq}, $\pfecq$ holds in $\mstr$. 

    Now, suppose $\Gamma\subseteq\NN$ is finite. Then, $\Gamma$ cannot be of the form $\NN\setminus\{n\}$ for some $n\in\NN$ and there exists $m\in\NN\setminus\Gamma$. So, again by definition of $\mstr$, $m\models\Gamma$. Thus, every finite subset of $\NN$ is satisfiable. Hence, every subset of $\NN$ is finitely satisfiable. Thus, $\fsecq$ does not hold in $\mstr$.
\end{exa}

\begin{rem}
Since $\fspecq,\fgecq,\fpfecq$ implies $\fsecq$, the above example also shows that $\pfecq$ does not imply any  of these. Consequently, as $\pfecq$ implies $\secq$, it follows that $\secq$ does not imply $\fspecq,\fgecq$ or $\fpfecq$.  
\end{rem}

\begin{exa}[$\specq\centernot\implies\fpfecq$]\label{exm:pfecq/specq/gecq/secq/fsecq≠>fpfecq}
    Let $\mstr=(\NN,\models,\pow(\NN))$ be an $\amst$, where $\models\,\subseteq\NN\times\pow(\NN)$ is defined as follows. For any $\Gamma\cup\{m\}\subseteq\NN$, $m\models\Gamma$ iff either $\Gamma=\emptyset$ or $\Gamma\neq\emptyset$ is finite with $0\notin\Gamma$ and $m\in\Gamma$.

    Let $\Gamma\subsetneq\NN$. If $\Gamma$ is infinite, then $\Gamma$ is not satisfiable. So, for any $m\in\Gamma$, $\Gamma\cup\{m\}=\Gamma\subsetneq\NN$ is not satisfiable. On the other hand, if $\Gamma$ is finite, then $\Gamma\cup\{0\}$ is finite and $\Gamma\cup\{0\}\subsetneq\NN$. Moreover, $\Gamma\cup\{0\}$ is not satisfiable. Thus, $\specq$ holds in $\mstr$.

   Now, every finite subset of $\NN\setminus\{0\}$ is satisfiable. So, by Theorem \ref{thm:char_fpfecq}, $\fpfecq$ does not hold in $\mstr$.
\end{exa}

\begin{rem}
Since $\specq$ implies $\gecq,\fgecq,\pfecq,\secq$ and $\fsecq$ the above example also show that none of these principles of explosion imply $\fpfecq$. Hence, as $\fspecq$ implies $\fpfecq$, it also follows that $\fspecq$ is not implied by any one of $\gecq$, $\fgecq$, $\pfecq$, $\secq$, and $\fsecq$.
\end{rem}

\begin{exa}[$\fsecq/\pfecq/\fpfecq\centernot\implies\fgecq$]\label{exm:fsecq/pfecq/fpfecq≠>fgecq}
    Let $\mstr=(\NN,\models,\pow(\NN))$ be an $\amst$, where $\models\,\subseteq\NN\times\pow(\NN)$ is defined as follows. For any $\Gamma\cup\{m\}\subseteq\NN$, $m\not\models\Gamma$ iff $\lvert\Gamma\rvert\ge3$. Then, as for any $n\in\NN$, $\lvert\NN\setminus\{n\}\rvert\ge3$, $\NN\setminus\{n\}$ is not satisfiable for all $n\in\NN$. So, by Theorem \ref{thm:char_pfecq}, $\pfecq$ holds in $\mstr$.

    For any $n\in\NN$, $\{n+1,n+2,n+3\}\subseteq\NN$ and is not satisfiable. Thus, $\NN\setminus\{n\}$ is not finitely satisfiable for all $n\in\NN$. So, by Theorem \ref{thm:char_fpfecq}, $\fpfecq$ holds in $\mstr$.

    For any $n\in\NN$, $\{n+1,n+2\}\cup\{n\}\subsetneq\NN$ and $\{n+1,n+2\}\cup\{n\}=\{n,n+1,n+2\}$ is not satisfiable, and hence, also not finitely satisfiable. Thus, $\secq$ and $\fsecq$ hold in $\mstr$.

    However, as every set with at most two elements is satisfiable, and hence finitely satisfiable, $\fgecq$ does not hold in $\mstr$.
\end{exa}

Our final example shows that $\fspecq$ does not imply $\secq$. Now, as $\fspecq$ implies $\fpfecq$, the same example also shows that $\fpfecq$ does not imply $\secq$.

\begin{exa}[$\fspecq/\fpfecq\centernot\implies\secq$]\label{exm:fspecq≠>secq}
    Let $\mstr=(\NN,\models,\pow(\NN))$ be an $\amst$, where $\models\,\subseteq\NN\times\pow(\NN)$ is defined as follows. For any $\Gamma\cup\{m\}\subseteq\NN$, $m\models\Gamma$ iff $\Gamma\neq\emptyset$. Thus, every non-empty set is satisfiable but, as $\emptyset$ is not satisfiable, for any $\Gamma\subseteq\NN$, $\Gamma$ is not finitely satisfiable. Thus, for any $n\in\NN$, $\NN\setminus\{n\}\neq\emptyset$, and hence, is not finitely satisfiable. So, by Theorem \ref{thm:char_fpfecq}, $\fpfecq$ holds in $\mstr$. Moreover, for any $n\in\NN$, $\{n\}$ is not finitely satisfiable. Hence, by Theorem \ref{thm:char_fspecq}, $\fspecq$ holds in $\mstr$.

    Now, as every non-empty set is satisfiable, there exists $n\in\NN$ (in fact, for any $n\in\NN$) such that, for all $\Gamma\subseteq\NN$, $\Gamma\cup\{n\}$ is satisfiable. This implies that $\secq$ does not hold in $\mstr$.
\end{exa}

The Figure \ref{fig:sem-exp} shows all the interconnections between the principles discussed in this article.

\begin{figure}[H]
\centering
\[\begin{tikzcd}[ampersand replacement=\&,cramped]
	{\mathsf{pfECQ}{\text{-}}{\mathsf{sat}}} \&\&\&\& {\mathsf{pfECQ}{\text{-}}{\mathsf{finsat}}} \\
	{\mathsf{spECQ}{\text{-}}{\mathsf{sat}}} \&\&\&\& {\mathsf{spECQ}{\text{-}}{\mathsf{finsat}}} \\
	{\mathsf{gECQ}{\text{-}}{\mathsf{sat}}} \&\&\&\& {\mathsf{gECQ}{\text{-}}{\mathsf{finsat}}} \\
	{\mathsf{sECQ}{\text{-}}{\mathsf{sat}}} \&\&\&\& {\mathsf{sECQ}{\text{-}}{\mathsf{finsat}}}
	\arrow["{\boxed{\text{Compactness}}}"{description}, dashed, tail reversed, from=1-1, to=1-5]
	\arrow[curve={height=50pt}, from=1-1, to=4-1]
	\arrow[curve={height=-50pt}, from=1-5, to=4-5]
	\arrow[from=2-1, to=1-1]
	\arrow["{\boxed{\text{Compactness}}}"{description}, dashed, tail reversed, from=2-1, to=2-5]
	\arrow[from=2-1, to=3-1]
	\arrow[from=2-5, to=1-5]
	\arrow[from=2-5, to=3-5]
	\arrow[curve={height=-12pt}, from=3-1, to=3-5]
	\arrow[from=3-1, to=4-1]
	\arrow["{\boxed{\text{Compactness}}}"{description}, curve={height=-12pt}, dashed, from=3-5, to=3-1]
	\arrow[from=3-5, to=4-5]
	\arrow["{\boxed{\text{Compactness}}}"{description}, dashed, tail reversed, from=4-1, to=4-5]
\end{tikzcd}\]
\caption{Semantic Explosion}
    \label{fig:sem-exp}
\end{figure}
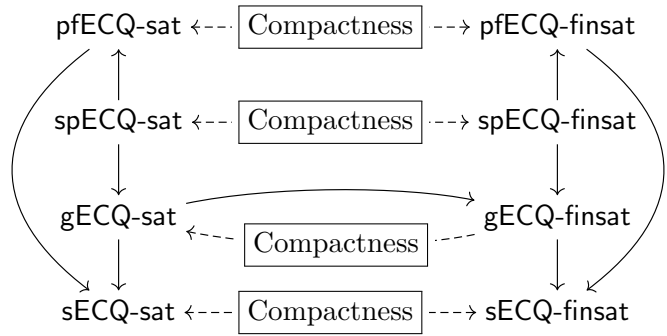

\section{Epilogue}

This article is a continuation of our study of generalized explosion principles through the lens of universal logic that was started in \cite{BasuRoy2022} and later continued in \cite{BasuRoy2024}. In this article, we have explored explosion principles from a semantic point of view using the concept of abstract model structures, introduced in \cite{RoyBasuChakraborty2025}. The characterization theorems proved in Subsection \ref{subsec:sat}, along with Theorem \ref{thm:logstr->amst}, provide characterization theorems for the principles of explosion discussed in \cite{BasuRoy2024}.

In this article, the semantic explosion principles have been obtained by replacing syntactic explosion with a specific interpretation of semantic explosion, which, for us, was identified with unsatisfiability. We have also observed that if the claim to compactness is given up, then we find another set of variants of these semantic explosion principles in terms of finite unsatisfiability.

However, there are other ways of interpreting semantic explosion. For example, we say that a logical structure $(\lang,\vdash)$ satisfies $\text{gECQ}$ if for all $\alpha\in \lang$ there exists $\beta\in \lang$ such that $\{\alpha,\beta\}\vdash\gamma$ for all $\gamma\in \lang$ (see \cite[Section 2]{BasuRoy2024}). If $\mstr=(\mathbf{M},\models,\mathcal{P}(\lang))$ is an $\amst$ such that $\vdash\subseteq\vdash_{\mstr}$, then the above statement is equivalent to the following: for all $\alpha\in \lang$, there exists $\beta\in \lang$ such that $\Mod(\{\alpha,\beta\})\subseteq\displaystyle\bigcap_{\gamma\in \lang}\Mod(\{\gamma\})$. One can thus interpret semantic explosion as $\vdash_{\mstr}$-triviality and obtain the corresponding semantic explosion principles.

An important set of explosion principles that has not been discussed in this article is the so-called `principles of partial explosion' introduced in \cite[Section 4]{BasuRoy2024}. Interconnections between its semantic variants and the ones introduced in this article remain an important direction for research.

The present study can be further extended to study relativized notions of satisfiability. More specifically, instead of replacing syntactic explosion with satisfiability/finite satisfiability, one can replace it with $\mathcal{K}$-satisfiability, introduced in Definition \ref{dfn:rel_sat}. By varying the set $\mathcal{K}$ in this definition, one can obtain different degrees of satisfiability. Such an investigation will also provide a unified perspective of the results proved in this paper because satisfiability and finite satisfiability can easily be seen to be special cases of $\mathcal{K}$-satisfiability. 

Apart from this, there are myriad directions in which we plan to extend the work on explosion principles within the general setup of universal logic. We leave these as future work.

\bibliographystyle{amsplain}
\bibliography{pracparacons}
\end{document}